\journal{} 
\definecolor{citation}{rgb}{0.2,0.6,0.2}
\definecolor{formula}{rgb}{0.1,0.2,0.6}
\definecolor{url}{rgb}{0,0,0.4}
\newtheorem{thm}{Theorem}[section]
\newtheorem{corollary}[thm]{Corollary}
\newtheorem{lemma}[thm]{Lemma}
\newtheorem{prop}[thm]{Proposition}
\theoremstyle{definition}
\theoremstyle{remark}
\newtheorem{rem}[thm]{Remark}
\newtheorem{exe}[thm]{Example}
\numberwithin{equation}{section}
\newcommand{\CC}{\mathscr{C}}
\newcommand{\FF}{\mathscr{F}}
\newcommand{\LL}{\mathscr{L}}
\newcommand{\SF}{{\mathscr S}}
\newcommand{\TT}{\mathscr{T}}
\newcommand{\R}{{\mathds R}}
\newcommand{\N}{{\mathds N}}
\newcommand{\Z}{{\mathds Z}}
\def\dys{\displaystyle}
\def\eps{\varepsilon}
\def\ds{(-\Delta)^s}
\newlength{\defbaselineskip}
\newcommand{\setlinespacing}[1]
           {\setlength{\baselineskip}{#1 \defbaselineskip}}
\begin{document}

        
\begin{frontmatter}

\title{Hitchhiker's guide \\ to the fractional Sobolev spaces}

\author[torv]{Eleonora Di Nezza}
\ead{dinezza@mat.uniroma2.it}

\author[torv,parma]{Giampiero Palatucci\fnref{fnGP}\corref{cor1}}\ead{giampiero.palatucci@unimes.fr}

\author[torv,milano]{Enrico Valdinoci\fnref{thanksEV}}
\ead{enrico@math.utexas.edu}

\cortext[cor1]{Corresponding author}

\fntext[fnGP]{Giampiero Palatucci has been supported by Istituto Nazionale di Alta Matematica ``F.~\mbox{Severi}''~(Indam)
 and by ERC grant 207573 ``Vectorial problems''.
}

\fntext[thanksEV]{Enrico Valdinoci has been supported by the ERC grant ``$\epsilon$ Elliptic Pde's and Symmetry of Interfaces and Layers for Odd Nonlinearities'' and the FIRB project~``A\&B~Analysis and Beyond''.}

\fntext[thanksRM2]{These notes grew out of a few lectures given in an undergraduate class held at the Universit\`a di Roma ``Tor Vergata''. It is a pleasure to thank the students for their warm interest, their sharp observations and their precious feedback.}

\address[torv]
{Dipartimento di Matematica, Universit\`a di Roma ``Tor Vergata'' - Via della Ricerca Scientifica, 1, 00133 Roma, Italy}
\address[parma]
{Dipartimento di Matematica, Universit\`a degli Studi di Parma, Campus - Viale delle Scienze,~53/A, 43124 Parma, Italy}
\address[milano]
{Dipartimento di Matematica, Universit\`a degli Studi di Milano - Via Saldini,~50, 20133 Milano, Italy}

\begin{abstract}
This paper deals with the fractional Sobolev spaces $W^{s,p}$.
We analyze the relations among some of their possible definitions and their role in the trace theory. We prove continuous and compact embeddings, investigating the problem of the extension domains and other regularity results.

Most of the results we present here are probably well known to the experts, but we believe that our proofs are original and we do not make use of any interpolation techniques nor pass through the theory of Besov spaces.
We also present some counterexamples in non-Lipschitz domains.

\end{abstract}

\begin{keyword}
Fractional Sobolev spaces \sep Gagliardo norm \sep fractional Laplacian \sep nonlocal energy \sep Sobolev embeddings \sep Riesz potential

\MSC[2010] Primary 46E35 \sep Secondary 35S30 \sep 35S05

\end{keyword}

\end{frontmatter}


\setcounter{tocdepth}{1}
{\small \tableofcontents}

\section{Introduction}\label{sec_intro}

These pages are for students and young researchers
of all ages who may like to hitchhike their way
from~$1$ to~$s\in(0,1)$. To wit, for anybody who,
only endowed with some basic undergraduate
analysis course (and knowing
{\tt where his towel is}),
would like to pick up some quick, crash and essentially
self-contained
information on the fractional Sobolev spaces~$W^{s,p}$.
\vspace{2mm}

The reasons for such a hitchhiker to start this
adventurous trip might be of different kind: (s)he could
be driven by mathematical curiosity, or could
be tempted by the many applications that
fractional calculus seems to have recently experienced.
In a sense, fractional Sobolev spaces
have been a classical topic in functional
and harmonic analysis all along, and some important books,
such as~\cite{Lan73,Ste70} treat the topic
in detail. On the other hand,
fractional spaces, and the corresponding nonlocal
equations, are now experiencing impressive
applications in different subjects, such as, among others,
the thin obstacle problem~\cite{Sil07, MS08},
optimization~\cite{DL76},
finance~\cite{CT04}, phase transitions~\cite{ABS98, CSM05, SiV09, FV11, GP06}, 
stratified materials~\cite{SV09, CV09, CV10},
anomalous diffusion~\cite{MK00, Vaz10, MMM11}, 
crystal dislocation~\cite{Tol97, GM10, BKM10},
soft thin films~\cite{Kur06},
semipermeable membranes
and flame propagation~\cite{CMS11},
conservation laws~\cite{conla},
ultra-relativistic
limits of quantum mechanics~\cite{FeffermanL86},
quasi-geostrophic flows~\cite{MajdaT96,cordoba98, CVa10},
multiple scattering~\cite{DuistermaatG75, CK98, SCSC},
minimal surfaces~\cite{CRS10, CV11},
materials science~\cite{MSBB},
water waves~\cite{Sto57,Z68,Whi74,Craig92,CraigG94,
Naumkin94,CW-95,Pan96,Craig97,Craig00,WW,HN05,Taber08, LlaveV}, elliptic problems with measure data~\cite{Min07, KPU11}, non-uniformly elliptic problems~\cite{ELM04}, gradient potential theory~\cite{Min11} and~singular set of minima of variational functionals~\cite{Min03, Min06}.
{\tt Don't panic}, instead, 
see also~\cite{Sil05,Sil07} for further motivation.
\vspace{2mm}

For these reasons, we thought that it could be of some interest to write down these notes --
or, more frankly, we wrote them
just because {\tt if you really want to
understand something, the best way is to
try and explain it to someone else}.
\vspace{2mm}

Some words may be needed to clarify the style of these
pages have been gathered.
We made the effort of making a rigorous
exposition, starting from scratch,
trying to use the least amount of
technology and with the simplest, low-profile language
we could use -- since {\tt capital letters
were always the best way of dealing with things
you didn't have a good answer to}. 
\vspace{2mm}

Differently from
many other references, we make no use of
Besov spaces\footnote{About this, we would like
to quote~\cite{Joh85}, according to which
``The paradox of Besov spaces is that the very thing that makes them so
successful also makes them very difficult to present and to learn''.}
or interpolation techniques,
in order to make the arguments as elementary as possible
and the exposition suitable for everybody, since 
{\tt when you are a student or whatever, and you can't afford
a car, or a plane fare, or even a train fare, all you can
do is hope that someone will stop and pick you up},
and {\tt it's nice to think that one could, even here and now,
be whisked away just by hitchhiking}.\vspace{2mm}

Of course, by dropping fine technologies and
powerful tools, we will miss
several very important features,
and we apologize for this. So, we highly recommend
all the excellent, classical books
on the topic, such as~\cite{Lan73, Ste70, Ada75,
Tri77, Tri78, Zie89, Run96,  Tar07, MS09, Leo09}, 
and the many references given therein. Without them,
our reader would remain just a hitchhiker, losing the opportunity of performing
the next crucial step towards a
full mastering of the subject and becoming the captain of a spaceship.
\vspace{2mm}

In fact, compared to other
Guides, this one is not {\tt definitive},
and it {\tt is a very evenly edited book
and contains many passages that simply seemed
to its editors a good idea at the time}. In any case, of course,
we know that
we cannot {\tt solve any major problems just with potatoes} --
it's fun to try and see how far one can get though.

In this sense, while
most of the results we present here are probably well known to the experts, we believe
that the exposition is somewhat original.\vspace{2mm}

These are the topics we cover.
In Section~\ref{definizioni},
we define the fractional Sobolev spaces~$W^{s,p}$
via the Gagliardo approach and we investigate some of their basic properties. In Section~\ref{sec_laplacian} 
we focus on the Hilbert case~$p=2$, dealing
with its relation with the fractional Laplacian,
and letting the principal value integral definition
interplay with the definition in the Fourier space. Then, in Section~\ref{sec_cost} we analyze the asymptotic behavior of the constant factor that appears in the definition of the fractional Laplacian.

Section~\ref{sec_estensione}
is devoted to the extension problem of a function in~$W^{s,p}(\Omega)$
to $W^{s,p}(\R^n)$: technically, this is slightly
more complicated than the classical analogue for integer Sobolev
spaces, since the extension interacts with the values taken by
the function in~$\Omega$ via the Gagliardo norm 
and the computations have to take care of it.

Sobolev inequalities and continuous embeddings are dealt with
in Section~\ref{sec_sobolev},
while Section~\ref{sec_compactness}
is devoted to compact embeddings.
Then, in Section~\ref{sec_holder}, we point out that functions
in~$W^{s,p}$ are continuous when~$sp$ is large enough.

In Section~\ref{sec_example}, we present some counterexamples in non-Lipschitz domains.

\vspace{2mm}

After that, we hope that
our hitchhiker reader has enjoyed his trip
from the integer Sobolev spaces to the fractional ones,
with the {\tt advantages of being able to get more quickly
from one place to another - particularly when
the place you arrived at had probably become, as a result of this,
very similar to the place you had left}.
\vspace{2mm}

The above sentences written in {\tt old-fashioned fonts}
are Douglas Adams's 
of course, and we took the latitude
of adapting their meanings to our purposes.
The rest of these pages are written in a more
conventional, may be boring, but hopefully rigorous, style.

\vspace{3mm}
\section{The fractional Sobolev space $W^{s,p}$}\label{definizioni}
This section is devoted to the definition of the fractional Sobolev spaces.

No prerequisite is needed. We just recall the definition of the Fourier transform of a distribution.
First, consider the Schwartz space $\SF$ of rapidly decaying $C^{\infty}$ functions in $\R^n$. The topology of this space is generated by the seminorms
$$
p_N(\varphi)= \sup_{x\in\R^n} (1+|x|)^N \sum_{|\alpha|\leq N} |D^{\alpha}\varphi(x)|\,, \quad N=0,1,2,...\,, 
$$
where $\varphi\in \SF(\R^n)$. 
Let $\SF'(\R^n)$ be the set of all tempered distributions, that is the topological dual of $\SF(\R^n)$. As usual, for any $\varphi\in \SF(\R^n)$, we denote by
$$
\FF\varphi(\xi)\, = \, \frac{1}{(2\pi)^{{n}/{2}}} \! \int_{\R^n} e^{-i\xi \cdot x}\,\varphi(x)\,dx
$$
the Fourier transform of $\varphi$ and we recall that one can extend $\FF$ from $\SF(\R^n)$ to $\SF'(\R^n)$.

\vspace{2mm}

Let $\Omega$ be a general, possibly non smooth, open set in $\R^n$. For any real $s>0$ and for any $p\in [1,\infty)$, 
we want to define the fractional Sobolev spaces $W^{s,p}(\Omega)$. In the literature, fractional Sobolev-type spaces are also called {\it Aronszajn}, {\it Gagliardo} or {\it Slobodeckij spaces}, by the name of the ones who introduced them, almost simultaneously (see~\cite{Aro55, Gag58, Slo58}).

\vspace{2mm}

We start by fixing the fractional exponent $s$ in $(0,1)$. For any $p \in[1,+\infty)$, we define $W^{s,p}(\Omega)$ as follows
\begin{equation}\label{def1}
W^{s,p}(\Omega):= \left\{ u\in L^p(\Omega)\; :\; \frac{|u(x)-u(y)|}{|x-y|^{\frac{n}{p}+s}} \in L^p(\Omega \times \Omega)  \right\};
\end{equation}
i.e, an intermediary Banach space between $L^p(\Omega)$ and $W^{1,p}(\Omega)$, endowed with the natural norm
\begin{equation}\label{def2}
\|u\|_{W^{s,p}(\Omega)}:= \left( \int_{\Omega}|u|^p \,dx\,+\,\int_{\Omega} \int_{\Omega} \frac{|u(x)-u(y)|^p}{|x-y|^{n+sp}} dx\, dy  \right)^{\frac{1}{p}}\!,
\end{equation}
where the term
$$
\dys 
[u]_{W^{s,p}(\Omega)} := \left( \int_{\Omega} \int_{\Omega} \frac{|u(x)-u(y)|^p}{|x-y|^{n+sp}} dx\, dy  \right)^{\frac{1}{p}}$$
is the so-called {\it Gagliardo {\rm(}semi{\,\rm)}norm} of $u$.
\vspace{2mm}

It is worth noticing that, as in the classical case with $s$ being an integer, the space $W^{s',p}$ is continuously embedded in $W^{s,p}$ when $s\leq s'$, as next result points out.

\begin{prop}\label{enrico}
Let $p\in[1,+\infty)$ and $0<s\leq s'<1$. Let $\Omega$ be an open set in~$\R^n$ and $u:\Omega\rightarrow \R$ be a measurable function. Then 
$$
\|u\|_{W^{s,p}(\Omega)} \leq C \|u\|_{W^{s',p}(\Omega)}
$$
for some suitable positive constant $C=C(n,s,p)\geq 1$. In particular, $$W^{s',p}(\Omega)\subseteq W^{s,p}(\Omega)\,.$$
\end{prop}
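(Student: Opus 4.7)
The plan is to bound the Gagliardo semi-norm $[u]_{W^{s,p}(\Omega)}$ by a combination of $\|u\|_{L^p(\Omega)}$ and $[u]_{W^{s',p}(\Omega)}$, so that the desired norm estimate follows. The natural idea is to split the domain of integration $\Omega\times\Omega$ according to whether $|x-y|$ is small or large, since the kernel $|x-y|^{-(n+sp)}$ compares to $|x-y|^{-(n+s'p)}$ in opposite ways in the two regimes.

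More precisely, I would write
\[
[u]_{W^{s,p}(\Omega)}^p = \int\!\!\!\int_{\{|x-y|<1\}\cap(\Omega\times\Omega)} \frac{|u(x)-u(y)|^p}{|x-y|^{n+sp}}\,dx\,dy + \int\!\!\!\int_{\{|x-y|\ge 1\}\cap(\Omega\times\Omega)} \frac{|u(x)-u(y)|^p}{|x-y|^{n+sp}}\,dx\,dy.
\]
On the near-diagonal piece $|x-y|<1$, since $sp\le s'p$ we have $|x-y|^{sp}\ge |x-y|^{s'p}$, hence $|x-y|^{-(n+sp)}\le |x-y|^{-(n+s'p)}$; this majorizes the first integral by $[u]_{W^{s',p}(\Omega)}^p$ directly. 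On the far piece $|x-y|\ge 1$, I would discard $u(x)-u(y)$ via the elementary convexity bound $|u(x)-u(y)|^p \le 2^{p-1}(|u(x)|^p+|u(y)|^p)$, use Fubini/symmetry, and estimate
\[
\int_{\{|z|\ge 1\}} \frac{dz}{|z|^{n+sp}} = \frac{\omega_{n-1}}{sp}
\]
by passing to polar coordinates (this is where the constant $C$ will depend on $s$ and $p$, through the factor $(sp)^{-1}$). This yields a bound of the form $C(n,s,p)\|u\|_{L^p(\Omega)}^p$ for the far piece.

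Combining the two estimates gives
\[
[u]_{W^{s,p}(\Omega)}^p \le [u]_{W^{s',p}(\Omega)}^p + C(n,s,p)\,\|u\|_{L^p(\Omega)}^p,
\]
and then adding $\|u\|_{L^p(\Omega)}^p$ to both sides and taking $p$-th roots produces the claimed inequality, with a constant $C\ge 1$ depending only on $n$, $s$, and $p$ (not on $s'$). The inclusion $W^{s',p}(\Omega)\subseteq W^{s,p}(\Omega)$ is then immediate.

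There is no real obstacle here; the only thing to watch for is the direction of the inequality $|x-y|^{-(n+sp)} \lessgtr |x-y|^{-(n+s'p)}$, which flips at $|x-y|=1$, motivating the split. The constant blows up as $s\to 0^{+}$ (via the $1/(sp)$ factor), which is consistent with the statement allowing $C$ to depend on $s$.
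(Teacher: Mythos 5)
Your proposal is correct and follows essentially the same route as the paper: split $\Omega\times\Omega$ along $|x-y|=1$, compare kernels on the near-diagonal piece, and use the convexity bound $|u(x)-u(y)|^p\le 2^{p-1}(|u(x)|^p+|u(y)|^p)$ together with the integrability of $|z|^{-(n+sp)}$ at infinity on the far piece. The paper carries out the same computation (with the far-field estimate recorded first as a standalone bound), so there is nothing substantively different to compare.
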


\begin{proof}
First,
\begin{eqnarray*}
\int_{\Omega}\int_{\Omega\, \cap \, \{|x-y|\geq 1\}} \frac{|u(x)|^p}{|x-y|^{n+sp}} \,dx\,dy \! &\leq & \! \int_{\Omega} \left( \int_{|z|\geq 1} \frac{1}{|z|^{n+sp}}\,dz  \right) |u(x)|^p\,dx\\
\\
&\leq & \! C(n,s,p) \|u\|^p_{L^p(\Omega)}\,,
\end{eqnarray*}
where we used the fact that the kernel $1/|z|^{n+sp}$ is integrable since $n+sp>n$.

\vspace{1mm}

Taking into account the above estimate, it follows
\begin{eqnarray}\label{enrico1}
&& \int_{\Omega}\int_{\Omega\,\cap\,\{|x-y|\geq 1\}}\!\!\frac{|u(x)-u(y)|^p}{|x-y|^{n+sp}} \,dx\,dy \nonumber \\
\nonumber \\
&& \qquad \qquad \qquad \qquad \leq 2^{p-1}\!\!\int_{\Omega}\int_{\Omega\,\cap\{|x-y|\geq 1\}}\!\!\frac{|u(x)|^p+|u(y)|^p}{|x-y|^{n+sp}} \,dx\,dy \nonumber \\
\nonumber \\
&&  \qquad \qquad \qquad \qquad \leq 2^{p} C(n,s,p) \|u\|^p_{L^p(\Omega)}\,.
\end{eqnarray}

On the other hand,
\begin{equation}\label{enrico2}
\int_{\Omega}\int_{\Omega\,\cap\,\{|x-y|<1\}} \frac{|u(x)-u(y)|^p}{|x-y|^{n+sp}} \,dx\,dy \, \leq\,  \int_{\Omega}\int_{\Omega\,\cap\,\{|x-y|< 1\}}\frac{|u(x)-u(y)|^p}{|x-y|^{n+s'p}} \,dx\,dy\,.
\end{equation}

\vspace{1mm}

Thus, combining~\eqref{enrico1} with~\eqref{enrico2}, we get
\begin{eqnarray*}
&& \int_{\Omega}\int_{\Omega} \frac{|u(x)-u(y)|^p}{|x-y|^{n+sp}} \,dx\,dy \nonumber \\
 &&\qquad \qquad \qquad  \leq 2^{p} C(n,s,p) \|u\|^p_{L^p(\Omega)}+ \int_{\Omega}\int_{\Omega} \frac{|u(x)-u(y)|^p}{|x-y|^{n+s'p}} \,dx\,dy
\end{eqnarray*}
and so
\begin{eqnarray*}
\|u\|^p_{W^{s,p}(\Omega)} \! & \leq & \! \big(2^p C(n,s,p)+1\big)\|u\|^p_{L^p(\Omega)}
+ \int_{\Omega}\int_{\Omega} \frac{|u(x)-u(y)|^p}{|x-y|^{n+s'p}} \,dx\,dy \\
\\
&\leq &\! C(n,s,p) \|u\|^p_{W^{s',p}(\Omega)},
\end{eqnarray*}
which gives the desired estimate, up to relabeling the constant $C(n,p,s)$.
\end{proof}
\vspace{1mm}

We will show in the forthcoming Proposition~\ref{enrico23} that the result in Proposition~\ref{enrico} holds also in the limit case, namely when $s'=1$, but for this we have to take into account the regularity of ~$\partial\Omega$  (see Example~\ref{exa_lip}).

\vspace{1mm}
 
As usual, for any $k\in \N$ and $\alpha\in (0,1]$, we say that $\Omega$ is of class $C^{k,\alpha}$ if there exists $M>0$ such that for any $x\in \partial\Omega$ there exists a ball $B=B_r(x)$, $r>0$, and an isomorphism $ T: Q\to B$\label{formula_t}
such that
$$
T \in C^{k,\alpha}(\overline{Q}), \ \
T^{-1}\in C^{k,\alpha}(\overline{B}), \ \
T(Q_+)=B \cap \Omega, \ \ T(Q_0)=B \cap \partial\Omega
$$
$$
\text{and} \ \ \|T\|_{C^{k,\alpha}(\overline{Q})}+\|T^{-1}\|_{C^{k,\alpha}(\overline{B})}\,\leq \,M,
$$
where
$$
Q:=\left\{ x=(x',x_n)\in\R^{n-1}\times \R \,:\, |x'|<1\;\mbox{and}\;|x_n|<1 \right\},$$
$$
Q_{+}:=\left\{ x=(x',x_n)\in\R^{n-1}\times \R \,:\, |x'|<1\;\mbox{and}\;0<x_n<1 \right\}
$$
$$
\text{and} \ \  Q_0:=\left\{ x\in Q\,:\, x_n=0\right\}.
$$
\vspace{1mm}

 We have the following result.
\begin{prop}\label{enrico23}
Let $p\in[1,+\infty)$ and $s\in(0,1)$. Let $\Omega$ be an open set in $\R^n$ of class $C^{0,1}$ with bounded boundary and $u:\Omega\rightarrow \R$ be a measurable function. Then 
\begin{equation}\label{enrico33}
\|u\|_{W^{s,p}(\Omega)} \leq C \|u\|_{W^{1,p}(\Omega)}
\end{equation}
for some suitable positive constant $C=C(n,s,p)\geq 1$. In particular, $$W^{1,p}(\Omega)\subseteq W^{s,p}(\Omega)\,.$$
\end{prop}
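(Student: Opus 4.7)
The plan is to mimic the proof of Proposition~\ref{enrico}: the $L^p$ part of the norm carries over trivially, and the tail piece of the Gagliardo double integral, namely the region where $|x-y|\geq 1$, is controlled by $\|u\|_{L^p(\Omega)}^p$ exactly as in estimate~\eqref{enrico1}, because the kernel $|z|^{-n-sp}$ is integrable at infinity. Thus the whole point is to bound the near-diagonal contribution
$$
I \,:=\, \int_{\Omega}\int_{\Omega\,\cap\,\{|x-y|<1\}}\frac{|u(x)-u(y)|^p}{|x-y|^{n+sp}}\,dx\,dy
$$
by $C\bigl(\|u\|_{L^p(\Omega)}^p+\|\nabla u\|_{L^p(\Omega)}^p\bigr)$, which is where the hypothesis $\partial\Omega\in C^{0,1}$ will be used.

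I would first invoke the classical extension theorem for $W^{1,p}$ on Lipschitz domains (Calder\'on--Stein) to produce $\tilde u\in W^{1,p}(\R^n)$ with $\tilde u|_{\Omega}=u$ and $\|\tilde u\|_{W^{1,p}(\R^n)}\le C(\Omega,n,p)\|u\|_{W^{1,p}(\Omega)}$. Then, enlarging the domain of integration and changing variables via $z=x-y$, one has $I\le \int_{\R^n}\int_{|z|<1}|z|^{-n-sp}|\tilde u(y+z)-\tilde u(y)|^p\,dz\,dy$. For smooth functions (which are dense) the fundamental theorem of calculus gives $\tilde u(y+z)-\tilde u(y)=\int_0^1 \nabla\tilde u(y+tz)\cdot z\,dt$, and Jensen's inequality yields
$$
|\tilde u(y+z)-\tilde u(y)|^p\,\le\,|z|^p\int_0^1|\nabla\tilde u(y+tz)|^p\,dt.
$$
Plugging this in, applying Fubini, and using the translation invariance of the Lebesgue measure to see that $\int_{\R^n}|\nabla\tilde u(y+tz)|^p\,dy=\|\nabla\tilde u\|_{L^p(\R^n)}^p$ for every $t$ and $z$, I obtain
$$
I\,\le\,\|\nabla\tilde u\|_{L^p(\R^n)}^p\int_{|z|<1}|z|^{p(1-s)-n}\,dz,
$$
and the last integral is finite because $s<1$ forces $p(1-s)-n>-n$. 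Combining this with the tail estimate and the extension bound gives~\eqref{enrico33}.

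The main technical obstacle is the pointwise identity $\tilde u(y+z)-\tilde u(y)=\int_0^1\nabla\tilde u(y+tz)\cdot z\,dt$: it requires $u$ to be defined along the entire segment joining $x$ and $y$, which is not guaranteed inside a generic non-convex $\Omega$. This is precisely why the Lipschitz regularity of $\partial\Omega$ (with bounded boundary, so that the covering used to build the extension operator is finite) is indispensable here, in contrast to Proposition~\ref{enrico}; the counterexample announced in Example~\ref{exa_lip} shows that without such regularity the embedding $W^{1,p}\subseteq W^{s,p}$ may genuinely fail, so the Lipschitz hypothesis cannot simply be dropped.
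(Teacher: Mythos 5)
Your proof follows essentially the same route as the paper's: extend $u$ to $\tilde u\in W^{1,p}(\R^n)$ via the Lipschitz extension theorem, split the Gagliardo seminorm at the diagonal, bound the $\{|x-y|\ge 1\}$ piece by $\|u\|_{L^p}^p$ exactly as in~\eqref{enrico1}, and control the near-diagonal piece by passing to $z=x-y$, using the fundamental theorem of calculus along the segment together with Jensen/H\"older on $[0,1]$, and then Fubini and translation invariance to reduce to $\|\nabla\tilde u\|_{L^p(\R^n)}^p\int_{B_1}|z|^{p(1-s)-n}\,dz<\infty$. The only (cosmetic) difference is that you extend to $\tilde u$ at the outset and work with it throughout, which is a cleaner bookkeeping than the paper's, which only swaps $u$ for $\tilde u$ midway through the chain of inequalities; your closing remark correctly identifies why the segment argument cannot be run on a general non-convex $\Omega$ without first extending, and hence why the Lipschitz hypothesis enters.
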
 

\begin{proof}
Let $u\in W^{1,p}(\Omega)$. Thanks to the regularity assumptions on the domain~$\Omega$, we can extend $u$ to a function $\tilde{u}:\R^n\rightarrow \R$ such that $\tilde{u}\in W^{1,p}(\R^n)$ and~$\|\tilde{u}\|_{W^{1,p}(\R^n)}\leq C\|u\|_{W^{1,p}(\Omega)}$ for a suitable constant~$C$~(see, e.g.,~\cite[Theorem 7.25]{GT01}).

 Now, using the change of variable $z=y-x$ and the H\"{o}lder inequality, we have
\begin{eqnarray}\label{enrico26}
&& \int_{\Omega}\int_{\Omega\,\cap\,\{|x-y|<1\}} \frac{|u(x)-u(y)|^p}{|x-y|^{n+sp}} \,dx\,dy  \nonumber \\
&& \qquad  \qquad  \qquad \qquad  \leq  \int_{\Omega}\int_{B_1}\!\frac{|u(x)-u(z+x)|^p}{|z|^{n+sp}} \,dz\,dx \nonumber \\
\nonumber \\
&& \qquad \qquad  \qquad \qquad  =  \int_{\Omega}\int_{B_1}\!\frac{|u(x)-u(z+x)|^p}{|z|^p}\frac{1}{|z|^{n+(s-1)p}} \,dz\,dx\,\nonumber \\
\nonumber \\
&& \qquad  \qquad  \qquad \qquad \leq  \int_{\Omega}\int_{B_1}\left( \int_{0}^1 \frac{|\nabla u (x+tz)|}{|z|^{\frac{n}{p}+s-1 }}\, dt\right)^{p}\! dz\,dx \nonumber \\
\nonumber \\
&& \qquad \qquad  \qquad \qquad   \leq \int_{\R^n} \int_{B_1} \int_{0}^1 \frac{|\nabla \tilde{u} (x+tz)|^p}{|z|^{n+p(s-1) }}\, dt \,dz\,dx\,\nonumber \\
\nonumber \\
&& \qquad \qquad  \qquad \qquad   \leq   \int_{B_1} \int_{0}^1 \frac{\|\nabla \tilde{u}\|^p_{L^p(\R^n)}}{|z|^{n+p(s-1) }}\, dt \,dz \nonumber \\
 \nonumber \\
&& \qquad \qquad  \qquad \qquad   \leq  C_1(n,s,p)\|\nabla \tilde{u} \|^p_{L^p(\R^n)} \nonumber \\
\nonumber\\
&& \qquad  \qquad  \qquad \qquad 
\leq   {C}_2(n,s,p) \|u\|^p_{W^{1,p}(\Omega)}\,.
\end{eqnarray}
\vspace{1mm}

Also, by \eqref{enrico1}, 
\begin{eqnarray}\label{enrico27}
\int_{\Omega}\int_{\Omega\,\cap\,\{|x-y|\geq 1\}}\!\frac{|u(x)-u(y)|^p}{|x-y|^{n+sp}} \,dx\,dy 
&\leq& \!  C(n,s,p) \|u\|^p_{L^p(\Omega)}\,.
\end{eqnarray}
Therefore, from \eqref{enrico26} and \eqref{enrico27} we get estimate \eqref{enrico33}.
\end{proof}

\vspace{2mm}

We remark that the Lipschitz assumption in Proposition \ref{enrico23} cannot be completely dropped (see Example~\ref{exa_lip} in Section~\ref{sec_example}); we also refer to the forthcoming Section~\ref{sec_estensione}, in which we discuss the extension problem in~$W^{s,p}$.

\vspace{3mm}

Let us come back to the definition of the space~$W^{s,p}(\Omega)$. Before going ahead, it is worth explaining why the definition 
in~\eqref{def1} cannot be plainly extended to the case $s \geq 1$. Suppose that $\Omega$ is a connected open set in $\R^n$, then any measurable function $u:\Omega\rightarrow \R$ such that
$$
\int_{\Omega}\, \int_{\Omega} \frac{|u(x)-u(u)|^p}{|x-y|^{n+sp}}\, dx	\, dy \, < \, +\infty
$$
is actually constant (see ~\cite[Proposition 2]{Bre02}).
This fact is a matter of scaling and it is strictly related to the following result that holds for any $u$ in $W^{1,p}(\Omega)$:
\begin{equation}\label{disBre}
\lim_{s\rightarrow 1^{-}} (1-s)\! \int_{\Omega}\int_{\Omega} \frac{|u(x)-u(y)|^p}{|x-y|^{n+sp}}\,dx\,dy\,=\, C_1\! \int_{\Omega} |\nabla u|^p\,dx 
\end{equation}
for a suitable positive constant $C_1$ depending only on $n$ and $p$ (see~\cite{BBM01}).

In the same spirit, in~\cite{MS02},  Maz'ja and Shaposhnikova proved that, for a function~$u \in \bigcup_{0<s<1} W^{s,p}(\R^n)$, it yields
\begin{equation}\label{disMaz}
\lim_{s\rightarrow 0^{+}} s \int_{\R^n}\int_{\R^n} \frac{|u(x)-u(y)|^p}{|x-y|^{n+sp}}\,dx\,dy\,=\, C_2 \int_{\R^n} |u|^p\,dx,
\end{equation}
for a suitable positive constant~$C_2$ depending\footnote{For the sake of simplicity, in the definition of the fractional Sobolev spaces and those of the corresponding norms in~\eqref{def1} and ~\eqref{def2} we avoided any normalization constant. In view of~\eqref{disBre} and \eqref{disMaz}, it is worthing notice that, in order to recover the classical $W^{1,p}$ and $L^p$ spaces, one may consider to add a factor $C(n,p,s)\approx s(1-s)$ in front of the double integral in~\eqref{def2}.
} only on~$n$ and~$p$.

\vspace{3mm}

When $s>1$ and it is not an integer we write $s=m+\sigma$, where $m$ is an integer and $\sigma\in (0,1)$. In this case the space $W^{s,p}(\Omega)$ consists of those equivalence classes of functions $u\in W^{m,p}(\Omega)$ whose distributional derivatives $D^{\alpha}u$, with $|\alpha|=m$, belong to $W^{\sigma,p}(\Omega)$, namely
\begin{equation}\label{def3}
W^{s,p}(\Omega):= \Big\{ u\in W^{m,p}(\Omega)\; :\; D^{\alpha}u \in W^{\sigma,p}(\Omega) \ \text{for any}\ \alpha\ \text{s.t.} \ |\alpha|=m  \Big\}
\end{equation}
and this is a Banach space with respect to the norm
\begin{equation}\label{def4}
\|u\|_{W^{s,p}(\Omega)}:= \left( \|u\|^p_{W^{m,p}(\Omega)} + \sum_{|\alpha|=m} \|D^{\alpha}u\|^p_{W^{\sigma,p}(\Omega)}  \right)^{\!\frac{1}{p}}\!.
\end{equation}
Clearly, if $s=m$ is an integer, the space $W^{s,p}(\Omega)$ coincides with the Sobolev space $W^{m,p}(\Omega)$.
\vspace{2mm}

\begin{corollary}\label{enrico7}
Let $p\in[1,+\infty)$ and $s,s'>1$. Let $\Omega$ be an open set in $\R^n$ of class $C^{0,1}$. Then, if $s'\geq s$, we have
$$
W^{s',p}(\Omega)\subseteq W^{s,p}(\Omega).
$$
\end{corollary}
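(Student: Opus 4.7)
The plan is to reduce the corollary to the two inclusions already established for fractional exponents in $(0,1)$. Following \eqref{def3}--\eqref{def4}, I would write $s=m+\sigma$ and $s'=m'+\sigma'$ with $m,m'\in\N$ and $\sigma,\sigma'\in[0,1)$; since both fractional parts are strictly less than $1$, the hypothesis $s'\geq s$ forces $m'\geq m$. To show that every $u\in W^{s',p}(\Omega)$ lies in $W^{s,p}(\Omega)$, it is enough by \eqref{def3} to check that (i) $u\in W^{m,p}(\Omega)$ and (ii) $D^{\alpha}u\in W^{\sigma,p}(\Omega)$ for every multi-index $\alpha$ with $|\alpha|=m$, with the convention that the case $\sigma=0$ is just the requirement $D^{\alpha}u\in L^{p}(\Omega)$, so that $W^{s,p}(\Omega)$ coincides with $W^{m,p}(\Omega)$.

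Point (i) is immediate from the classical nested inclusion $W^{m',p}(\Omega)\subseteq W^{m,p}(\Omega)$, so all the work goes into (ii). I would split this into two sub-cases. If $m'=m$, then $s'\geq s$ forces $\sigma'\geq\sigma$, and $u\in W^{s',p}(\Omega)$ gives $D^{\alpha}u\in W^{\sigma',p}(\Omega)$; Proposition~\ref{enrico}, applied with the fractional exponents $\sigma\leq\sigma'$ both in $(0,1)$, then delivers $D^{\alpha}u\in W^{\sigma,p}(\Omega)$, while the degenerate cases $\sigma=0$ or $\sigma'=0$ reduce to a plain $L^{p}$ inclusion. If instead $m'>m$, then $D^{\alpha}u$ still has $m'-m\geq 1$ distributional derivatives in $L^{p}(\Omega)$, so $D^{\alpha}u\in W^{1,p}(\Omega)$, and Proposition~\ref{enrico23} promotes this to $D^{\alpha}u\in W^{\sigma,p}(\Omega)$.

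Assembling (i) and (ii) across both sub-cases gives the claimed inclusion, and chaining the constants from Propositions~\ref{enrico} and~\ref{enrico23} yields the associated continuous embedding. The only point of friction I anticipate is the mild mismatch in hypotheses: Proposition~\ref{enrico23} is formulated for $C^{0,1}$ domains with bounded boundary, while the corollary only posits $C^{0,1}$. This is essentially harmless, since its use here is made on $D^{\alpha}u$ and the argument is local near $\partial\Omega$, so one can either tacitly import the bounded-boundary hypothesis or cover $\partial\Omega$ by finitely many Lipschitz charts. Beyond this bookkeeping, the proof is pure case analysis with no genuine obstacle.
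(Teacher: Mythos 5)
Your proof is correct and follows essentially the same route as the paper: decompose into the cases $m'=m$ (apply Proposition~\ref{enrico} to $D^{\alpha}u$) and $m'>m$ (pass through the classical inclusion into $W^{m+1,p}$ and then apply Proposition~\ref{enrico23}), which the paper packages as the chain $W^{k'+\sigma',p}\subseteq W^{k',p}\subseteq W^{k+1,p}\subseteq W^{k+\sigma,p}$. Your observation that the bounded-boundary hypothesis of Proposition~\ref{enrico23} is being tacitly imported is a fair catch, but it applies equally to the paper's own proof and is not a defect specific to your argument.
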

\begin{proof}
We write $s=k+\sigma$ and $s'=k'+\sigma'$, with $k,k'$ integers and $\sigma,\sigma'\in(0,1)$. In the case $k'=k$, we can use Proposition~\ref{enrico} in order to conclude that $W^{s',p}(\Omega)$ is continuously embedded in $W^{s,p}(\Omega)$. On the other hand, if $k'\geq k+1$, using Proposition~\ref{enrico} and Proposition~\ref{enrico23} we have the following chain
$$ W^{k'+\sigma',p}(\Omega) \subseteq W^{k',p}(\Omega) \subseteq W^{k+1,p}(\Omega) \subseteq W^{k+\sigma,p}(\Omega) \,.
$$
The proof is complete.
\end{proof}
\vspace{2mm}

As in the classic case with $s$ being an integer, any function in the fractional Sobolev space $W^{s,p}(\R^n)$ can be approximated by a sequence of smooth functions with compact support.
\begin{thm}\label{density}
For any $s> 0$,  the space $C_0^{\infty}(\R^n)$ of  smooth functions with compact support is dense in $W^{s,p}(\R^n)$.
\end{thm}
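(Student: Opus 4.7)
The plan is to combine a cut-off (truncation) step with a mollification step, as in the classical proof of density of $C_0^\infty$ in $W^{m,p}(\R^n)$. The case $s\in\N$ is standard; the non-integer case $s=m+\sigma$ with $m\in\N$ and $\sigma\in(0,1)$ reduces via the definition~\eqref{def3} to applying the fractional result to the distributional derivatives $D^\alpha u$ with $|\alpha|\leq m$, using that $D^\alpha$ commutes with convolution and interacts with a smooth cut-off through the Leibniz rule. The substance of the proof therefore lies in the range $s\in(0,1)$, on which I focus below.

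\smallskip

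\noindent\textbf{Truncation.} Pick $\eta\in C_0^\infty(\R^n)$ with $\eta\equiv 1$ on $B_1$, $0\leq\eta\leq 1$, and $\mathrm{supp}\,\eta\subset B_2$; set $\eta_R(x):=\eta(x/R)$ and $u_R:=\eta_R u$. I claim $u_R\to u$ in $W^{s,p}(\R^n)$ as $R\to\infty$. The $L^p$ part is immediate by dominated convergence. For the Gagliardo seminorm, I split
$$
(u_R-u)(x)-(u_R-u)(y)\,=\,(\eta_R(x)-1)\bigl(u(x)-u(y)\bigr)+u(y)\bigl(\eta_R(x)-\eta_R(y)\bigr),
$$
and treat the two pieces separately. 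The first vanishes by dominated convergence, since $|\eta_R-1|^p\to 0$ pointwise and stays bounded by $2^p$, while $|u(x)-u(y)|^p|x-y|^{-n-sp}\in L^1(\R^{2n})$. The second, delicate, piece is controlled through $|\eta_R(x)-\eta_R(y)|\leq \min\bigl(2,\|\nabla\eta\|_\infty|x-y|/R\bigr)$: splitting the $x$-integral into $\{|x-y|<R\}$ and $\{|x-y|\geq R\}$, and exploiting $p(1-s)>0$ so that $\int_0^R r^{p(1-s)-1}dr<+\infty$, both pieces give a $y$-uniform bound $C\,R^{-sp}$; multiplying by $|u(y)|^p$ and integrating yields $C\,R^{-sp}\|u\|_{L^p(\R^n)}^p\to 0$.

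\smallskip

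\noindent\textbf{Mollification.} Having reduced to $u\in W^{s,p}(\R^n)$ with compact support, let $u_\varepsilon:=\rho_\varepsilon\ast u$ with $(\rho_\varepsilon)$ a standard mollifier; then $u_\varepsilon\in C_0^\infty(\R^n)$ for $\varepsilon$ small. By Minkowski's integral inequality and translation invariance of the $W^{s,p}$-norm,
$$
\|u_\varepsilon-u\|_{W^{s,p}(\R^n)}\,\leq\,\int\rho_\varepsilon(z)\,\|u(\cdot-z)-u\|_{W^{s,p}(\R^n)}\,dz,
$$
so it suffices to verify continuity of translations in $W^{s,p}(\R^n)$. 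The $L^p$ part is classical; for the Gagliardo part I set $f(x,y):=(u(x)-u(y))|x-y|^{-n/p-s}\in L^p(\R^{2n})$ and observe that
$$
[u(\cdot-z)-u]_{W^{s,p}(\R^n)}^p\,=\,\|f(\cdot-z,\cdot-z)-f\|_{L^p(\R^{2n})}^p,
$$
which reduces the matter to $L^p$-continuity of translations in $\R^{2n}$.

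\smallskip

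\noindent\textbf{Main obstacle.} The decisive step is the truncation: unlike in the local case, the cut-off does not commute with the nonlocal seminorm, and the term $u(y)(\eta_R(x)-\eta_R(y))$ must be controlled by trading the Lipschitz bound on $\eta_R$ (efficient only at scales $\lesssim R$, where it yields a factor $|x-y|^p/R^p$) against the crude pointwise bound at larger scales, using both $s>0$ (to ensure decay at infinity) and $s<1$ (to ensure local integrability) in order to close the estimate into the decay $R^{-sp}$.
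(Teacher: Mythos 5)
Your argument is correct, and it is genuinely more informative than what the paper offers: the paper does not prove Theorem~\ref{density} at all, but simply cites~\cite[Theorem~7.38]{Ada75}, which sits within a framework of interpolation and Besov-space theory that this paper otherwise explicitly avoids. Your proof, by contrast, is self-contained and elementary, and in fact it is in the spirit of the rest of the paper: the truncation estimate is a close cousin of the cut-off Lemma~\ref{giampi3} used for the extension theorem (there too the key is to split the kernel at scale $1$ and exploit both $n+(s-1)p<n$ and $n+sp>n$, i.e.\ both $s>0$ and $s<1$), while the mollification step reduces cleanly to $L^p$-continuity of translations in $\R^{2n}$ through the identity $[u]_{W^{s,p}(\R^n)}=\|f\|_{L^p(\R^{2n})}$ with $f(x,y)=(u(x)-u(y))|x-y|^{-n/p-s}$, which is exactly the observation encoded in~\eqref{def1}. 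The only place you are slightly terse is the reduction of the case $s=m+\sigma>1$ to the case $s\in(0,1)$: one should note that the Leibniz expansion of $D^\alpha(\eta_R u)$ produces terms $c_{\beta\gamma}(D^\beta\eta_R)(D^\gamma u)$ whose factors $D^\beta\eta_R$ scale like $R^{-|\beta|}\to 0$ for $|\beta|\geq 1$, and that each such term must then be estimated in $W^{\sigma,p}$ via the same truncation lemma plus the product estimate of Lemma~\ref{giampi3}-type; this is routine but not entirely automatic. Modulo that minor expansion, the proposal is a complete and correct proof, by a route the paper itself chose not to spell out.
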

\noindent A proof can be found in \cite[Theorem 7.38]{Ada75}.

\vspace{2mm}
Let $W^{s,p}_0(\Omega)$ denote the closure of $C_0^{\infty}(\Omega)$ in the norm $\|\cdot\|_{W^{s,p}(\Omega)}$ defined in~\eqref{def4}. Note that, in view of Theorem \ref{density}, we have
\begin{equation}\label{dens0}
W_0^{s,p}(\R^n)= W^{s,p}(\R^n)\,,
\end{equation}
but in general, for $\Omega \subset \R^n$, $W^{s,p}(\Omega)\neq W^{s,p}_0(\Omega)$, i.e. 
$C_0^{\infty}(\Omega)$ 
is not dense in $W^{s,p}(\Omega)$.
Furthermore, it is clear that the same inclusions stated in Proposition~\ref{enrico}, Proposition~\ref{enrico23} and Corollary \ref{enrico7} hold for the spaces $W^{s,p}_0(\Omega)$.
\begin{rem}\label{distr}
For $s<0$ and $p \in (1,\infty)$, we can define $W^{s,p}(\Omega)$ as the dual space of $W_0^{-s,q}(\Omega)$ where $1/p+1/q=1$.
Notice that, in this case, the space $W^{s,p}(\Omega)$ is actually a space of distributions on $\Omega$, since it is the dual of a space having $C_0^{\infty}(\Omega)$ as density subset.
\end{rem}

\vspace{1mm}

Finally, it is worth noticing that the fractional Sobolev spaces play an important role in the trace theory. Precisely, for any $p\in (1,+\infty)$, assume that the open set $\Omega\subseteq\R^n$ is sufficiently smooth, then the space of traces $Tu$ on $\partial \Omega$ of $u$ in $W^{1,p}(\Omega)$ is characterized by
$
\dys \|Tu\|_{W^{1-\frac{1}{p},p}(\partial\Omega)}  \! <  +\infty 
$ 
(see \cite{Gag57}). 
Moreover, the trace operator $T$ is surjective from $W^{1,p}(\Omega)$ onto $W^{1-\frac{1}{p},p}(\partial\Omega)$. In the quadratic case $p=2$, the situation simplifies considerably, as we will see in the next section and a proof of the above trace embedding can be find in the forthcoming Proposition~\ref{pro_tracce}.

\vspace{2mm}

\section{The space $H^s$ and the fractional Laplacian operator}\label{sec_laplacian}
In this section, we focus on the case $p=2$. This is quite an important case since the fractional Sobolev spaces $W^{s,2}(\R^n)$ and $W_0^{s,2}(\R^n)$ turn out to be Hilbert spaces. They are usually denoted by $H^s(\R^n)$ and $H_0^s(\R^n)$, respectively. Moreover,
they are strictly related to the fractional Laplacian operator $(-\Delta)^{s}$ (see~Proposition~\ref{prop_frac}),
where, for any $u\in \SF$ and $s\in(0,1)$, $(-\Delta)^s$ is defined as
\begin{eqnarray}\label{def_laplacian}
(-\Delta)^s u(x) & = & C(n,s)\,P.V.\!\!\int_{\R^n} \frac{u(x)-u(y)}{|x-y|^{n+2s}}\, dy  \\
& = &  C(n,s) \lim_{\eps\to 0^+}\int_{\CC B_\eps(x)} \frac{u(x)-u(y)}{|x-y|^{n+2s}}\, dy. \nonumber 
\end{eqnarray}
Here $P.V.$ is a commonly used abbreviation for ``in the principal value sense'' (as defined by the latter equation) and $C(n,s)$ is a dimensional constant that depends on $n$ and $s$, precisely given by
\begin{equation}\label{def_c}
C(n,s)= \left(\int_{\R^n} \frac{1-\cos(\zeta_1)}{|\zeta|^{n+2s}}\,d\zeta \right)^{\!-1}.
\end{equation}
The choice of this constant is motived by Proposition \ref{pro_symbol}.

\vspace{1mm}

\begin{rem}
Due to the singularity of the kernel, the right hand-side of \eqref{def_laplacian} is not well defined in general. In the case $s\in(0,1/2)$ the integral in \eqref{def_laplacian} is not really singular near $x$. Indeed, for any $u\in \SF$, we have
\begin{eqnarray*}
&& \int_{\R^n} \frac{|u(x)-u(y)|}{|x-y|^{n+2s}}\, dy \\
&& \qquad  \qquad \qquad \leq C \int_{B_R} \frac{|x-y|}{\,|x-y|^{n+2s}}\, dy + \|u\|_{L^{\infty}(\R^n)} \int_{\CC B_R} \frac{1}{\,|x-y|^{n+2s}}\, dy \\
&&\qquad  \qquad \qquad = C \left(\int_{B_R} \frac{1}{|x-y|^{n+2s-1}}\, dy  +  \int_{\CC B_R} \frac{1}{\,|x-y|^{n+2s}}\, dy\right) \\
&&\qquad  \qquad \qquad =C \left( \int_{0}^{R}\! \frac{1}{|\rho|^{2s}} \, d\rho \,  +\int_{R}^{+\infty}\! \frac{1}{|\rho|^{2s+1}} \, d\rho \right)\, < \, +\infty
\end{eqnarray*}
where $C$ is a positive constant depending only on the dimension and on the $L^{\infty}$~norm of $u$.
\end{rem}

\vspace{2mm}

Now, we show that one may write the singular integral in~\eqref{def_laplacian} as a weighted second order differential quotient.
\begin{lemma}\label{lem_2nd}
Let $s\in (0,1)$ and let $(-\Delta)^s$ be the fractional Laplacian operator defined by~\eqref{def_laplacian}. Then, for any $u\in \SF$,
\begin{equation}\label{def_2nd}
(-\Delta)^s u(x) = -\frac{1}{2}C(n,s)\int_{\R^n}\frac{u(x+y)+u(x-y)-2u(x)}{|y|^{n+2s}}\, dy, \ \ \ \forall x\in \R^n.
\end{equation}
\end{lemma}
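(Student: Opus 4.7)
The plan is to start from the principal value definition and exploit the symmetry $y \mapsto -y$ to symmetrize the integrand into a second-order difference quotient, then remove the principal value by a dominated convergence argument.

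First, I would translate the variable of integration in the defining formula by setting $z = y - x$, which converts
$$(-\Delta)^s u(x) = C(n,s)\lim_{\eps\to 0^+}\int_{\CC B_\eps(x)} \frac{u(x)-u(y)}{|x-y|^{n+2s}}\,dy$$
into
$$C(n,s)\lim_{\eps\to 0^+}\int_{\CC B_\eps} \frac{u(x)-u(x+z)}{|z|^{n+2s}}\,dz,$$
where now $B_\eps$ is the ball centred at the origin. Since $\CC B_\eps$ is symmetric under $z\mapsto -z$ and $|z|$ is even, the same expression also equals $C(n,s)\lim_{\eps\to 0^+}\int_{\CC B_\eps} \frac{u(x)-u(x-z)}{|z|^{n+2s}}\,dz$. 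Averaging the two representations produces
$$(-\Delta)^s u(x) \;=\; -\frac{1}{2}C(n,s)\lim_{\eps\to 0^+}\int_{\CC B_\eps} \frac{u(x+z)+u(x-z)-2u(x)}{|z|^{n+2s}}\,dz.$$

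The remaining task is to show that one may replace $\CC B_\eps$ by $\R^n$ and drop the limit. The key observation, which is the crux of the argument, is that the symmetrized numerator vanishes to second order at $z=0$: for $u\in\SF$ a Taylor expansion gives
$$\bigl|u(x+z)+u(x-z)-2u(x)\bigr| \;\leq\; \|D^2 u\|_{L^\infty(\R^n)}\,|z|^2 \qquad \text{for } |z|\leq 1,$$
so the integrand is dominated by $\|D^2 u\|_\infty\,|z|^{2-n-2s}$ near the origin, which is integrable over $B_1$ because $n+2s-2<n$ (here we use $s<1$). For $|z|\geq 1$ the integrand is dominated by $4\|u\|_{L^\infty(\R^n)}|z|^{-n-2s}$, which is integrable because $n+2s>n$. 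Hence the integrand is in $L^1(\R^n)$ uniformly in $\eps$, and dominated convergence lets me pass to the limit inside the integral, yielding \eqref{def_2nd}.

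The main (and really only) obstacle is the removal of the principal value, i.e.\ verifying integrability near $z=0$; this is exactly where the second-order cancellation from symmetrization is essential, since the unsymmetrized numerator $u(x)-u(x+z)$ is only of order $|z|$ and would not give an integrable bound in the range $s\in[1/2,1)$. Everything else is a straightforward change of variables.
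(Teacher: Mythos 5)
Your proposal is correct and follows essentially the same route as the paper: change of variables $z=y-x$, symmetrization via $z\mapsto -z$ to produce the second-order difference quotient, and a Taylor bound $|u(x+z)+u(x-z)-2u(x)|\le \|D^2u\|_{L^\infty}|z|^2$ near the origin to justify dropping the principal value. Your version is, if anything, slightly more careful in keeping the $\eps$-excised domain $\CC B_\eps$ explicit when invoking the $z\mapsto -z$ symmetry before passing to the limit.
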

\begin{proof}
The equivalence of the definitions in~\eqref{def_laplacian} and~\eqref{def_2nd} immediately follows by the standard changing variable formula.

\vspace{2mm}

Indeed, by choosing $z=y-x$, we have
\begin{eqnarray}\label{eq_z}
\ds u(x) \! & = & \! - \,C(n,s)\,P.V. \!\int_{\R^n} \frac{u(y)-u(x)}{|x-y|^{n+2s}}\, dy \nonumber \\ \nonumber \\
& = & \! - \,C(n,s)\,P.V.\!\int_{\R^n}\frac{u(x+z)-u(x)}{|z|^{n+2s}}\, dz.
\end{eqnarray}
\vspace{1mm}

Moreover, by substituting $\tilde{z}=-z$ in last term of the above equality, we have
\begin{equation}\label{eq_ztilde}
P.V.\!\int_{\R^n}\!\frac{u(x+z)-u(x)}{|z|^{n+2s}}\, dz \, = \,P.V.\! \int_{\R^n}\!\frac{u(x-\tilde{z})-u(x)}{|\tilde{z}|^{n+2s}}\, d\tilde{z}.
\end{equation}
and so after relabeling $\tilde{z}$ as $z$
\begin{eqnarray}\label{eq_ztilde1}
&& 2 P.V. \! \int_{\R^n}\!\frac{u(x+z)-u(x)}{|z|^{n+2s}}\, dz \nonumber \\
&& \qquad  \qquad \quad = P.V. \! \int_{\R^n}\!\frac{u(x+z)-u(x)}{|z|^{n+2s}}\, dz +P.V.\! \int_{\R^n}\!\frac{u(x-z)-u(x)}{|z|^{n+2s}}\,dz\, \nonumber \\
&& \qquad  \qquad \quad  = P.V.\! \int_{\R^n}\!\frac{u(x+z)+u(x-z)-2u(x)}{|z|^{n+2s}}\, dz.
\end{eqnarray}
Therefore, if we rename $z$ as $y$ in~\eqref{eq_z} and~\eqref{eq_ztilde1}, we can write the fractional Laplacian operator in~\eqref{def_laplacian} as
\begin{equation*}
(-\Delta)^s u(x) \, = \, -\frac{1}{2}\,C(n,s)\,P.V.\!\int_{\R^n}\frac{u(x+y)+u(x-y)-2u(x)}{|y|^{n+2s}}\, dy.
\end{equation*}
The above representation is useful to remove the singularity of the integral at the origin. Indeed, for any smooth function $u$, a second order Taylor expansion yields
$$
\frac{u(x+y)+u(x-y)-2u(x)}{|y|^{n+2s}} \leq \frac{\|D^2u\|_{L^\infty}}{|y|^{n+2s-2}},
$$
which is integrable near $0$ (for any fixed $s\in (0,1)$). Therefore, since $u\in\SF$, one can get rid of the $P.V.$ and write~\eqref{def_2nd}.
\end{proof}

\vspace{3mm}

\subsection{An approach via the Fourier transform}\label{sec_fourier}

Now, we take into account an alternative definition of the space $H^s(\R^n)=W^{s,2}(\R^n)$ 
via the Fourier transform.
Precisely, we may define 
\begin{equation}\label{def_viafourier}
\hat{H}^{s}(\R^n)=\left\{\,u\in L^2(\R^n)\;\, :\;\,\int_{\R^n} (1+|\xi|^{2s}) |\FF u(\xi)|^2\,d\xi < +\infty \, \right\}
\end{equation}
and we observe that the above definition, unlike the ones via the Gagliardo norm in~\eqref{def2}, is valid also for any real $s\geq 1$. 
\vspace{1mm}

We may also use an analogous definition for the case $s<0$ by setting
\begin{equation*}
\hat{H}^{s}(\R^n)=\left\{\,u\in \SF'(\R^n)\;\, :\;\,\int_{\R^n} (1+|\xi|^{2})^{s} |\FF u(\xi)|^2\,d\xi < +\infty \, \right\},
\end{equation*}
although in this case the space $\hat{H}^s(\R^n)$ is not a subset of $L^2(\R^n)$ and, in order to use the Fourier transform, one has to start from an element of $\SF'(\R^n)$, (see also Remark \ref{distr}).

\vspace{1mm}
The equivalence of the space $\hat{H}^s(\R^n)$ defined  in~\eqref{def_viafourier} with the one defined in the previous section via the Gagliardo norm (see~\eqref{def1}) is stated and proven in the forthcoming Proposition~\ref{pro_equiv}.

\vspace{2mm}

First, we will prove that the fractional Laplacian $\ds$ can be viewed as a pseudo-differential operator of symbol $|\xi|^{2s}$. The proof is standard and it can be found in many papers (see, for instance, \cite[Chapter 16]{Tar07}). We will follow the one in~\cite{Val09} (see Section 3), in which is shown how singular integrals naturally arise as a continuous limit of discrete long jump random walks.

\begin{prop}\label{pro_symbol}
Let $s\in (0,1)$ and let $(-\Delta)^s:\SF\to L^2(\R^n)$ be the fractional Laplacian operator defined by~\eqref{def_laplacian}. Then, for any $u\in \SF$,
\begin{equation}\label{def_confourier}
(-\Delta)^s u \, = \, \FF^{-1}(|\xi|^{2s}(\FF u)) \ \ \ \forall \xi\in \R^n.
\end{equation}
\end{prop}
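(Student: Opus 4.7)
The plan is to start from the second-order-difference representation of Lemma \ref{lem_2nd}, take the Fourier transform of both sides, swap the $x$- and $y$-integrals by Fubini, and then reduce the resulting $y$-integral to the normalizing constant $C(n,s)$ by a rotation and a rescaling.

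Concretely, I would first apply $\mathscr{F}$ to
$$(-\Delta)^s u(x) = -\tfrac{1}{2} C(n,s)\int_{\R^n}\frac{u(x+y)+u(x-y)-2u(x)}{|y|^{n+2s}}\,dy,$$
and use the translation rule $\mathscr{F}(u(\cdot+y))(\xi)=e^{i\xi\cdot y}\mathscr{F}u(\xi)$ on the inner $x$-integral after interchanging the order of integration. This collapses the $x$-integral to
$$\bigl(e^{i\xi\cdot y}+e^{-i\xi\cdot y}-2\bigr)\mathscr{F}u(\xi) = -2\bigl(1-\cos(\xi\cdot y)\bigr)\mathscr{F}u(\xi),$$
leaving
$$\mathscr{F}\bigl((-\Delta)^s u\bigr)(\xi) = C(n,s)\,\mathscr{F}u(\xi)\int_{\R^n}\frac{1-\cos(\xi\cdot y)}{|y|^{n+2s}}\,dy.$$

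Next I would evaluate the $y$-integral by exploiting its symmetry. Let $R$ be a rotation with $R\xi=|\xi|e_1$; the change of variable $z=Ry$ turns the integrand into $(1-\cos(|\xi|z_1))/|z|^{n+2s}$. Then rescaling $w=|\xi|z$ and using the homogeneity $|z|^{-n-2s}\,dz = |\xi|^{2s}\,|w|^{-n-2s}\,dw$ yields
$$\int_{\R^n}\frac{1-\cos(\xi\cdot y)}{|y|^{n+2s}}\,dy = |\xi|^{2s}\int_{\R^n}\frac{1-\cos(w_1)}{|w|^{n+2s}}\,dw = \frac{|\xi|^{2s}}{C(n,s)},$$
by the very definition \eqref{def_c} of $C(n,s)$. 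Plugging this in gives $\mathscr{F}((-\Delta)^s u)(\xi)=|\xi|^{2s}\mathscr{F}u(\xi)$, and applying $\mathscr{F}^{-1}$ produces \eqref{def_confourier}.

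The one step that requires care, and is the main technical obstacle, is the Fubini interchange: the kernel $|y|^{-n-2s}$ is non-integrable near the origin, so one cannot swap integrals naively. The fix is the same second-order Taylor bound used at the end of Lemma \ref{lem_2nd}: for $u\in\mathscr{S}$,
$$|u(x+y)+u(x-y)-2u(x)| \le \min\bigl\{\|D^2 u\|_{L^\infty}|y|^2,\;4\|u\|_{L^\infty}\bigr\},$$
together with the Schwartz decay of $u$ in $x$. The small-$|y|$ part is controlled by $|y|^{2-n-2s}$ (integrable since $2-2s>0$) multiplied by an $L^1_x$ bound on $D^2 u$, and the large-$|y|$ part by $|y|^{-n-2s}$ (integrable at infinity) multiplied by an $L^1_x$ bound on $u$. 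This furnishes an absolutely integrable majorant on $\R^n_x\times\R^n_y$, legitimising the interchange; finiteness of the normalising integral defining $C(n,s)$ follows from exactly the same estimate applied to $1-\cos(\zeta_1)$. Once Fubini is justified, the remaining manipulations (rotation invariance, scaling) are routine.
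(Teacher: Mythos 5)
Your proposal is correct and follows essentially the same route as the paper: start from the second-difference representation of Lemma~\ref{lem_2nd}, justify the interchange of the $y$-integral with the Fourier transform in $x$ via a dominating bound combining the second-order Taylor estimate for small $|y|$ with the Schwartz decay in $x$, collapse the $x$-integral to $-2(1-\cos(\xi\cdot y))\FF u(\xi)$, and then evaluate $\int(1-\cos(\xi\cdot y))|y|^{-n-2s}\,dy = C(n,s)^{-1}|\xi|^{2s}$ by rotation invariance and scaling. The only cosmetic difference is that the paper packages the Fubini majorant as a single explicit function of $(x,y)$, whereas you describe it as a product of an $L^1_y$ factor and an $L^1_x$ factor on each of the two regions $|y|<1$ and $|y|\ge 1$; both phrasings carry the same content.
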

\begin{proof}
In view of Lemma~\ref{lem_2nd}, we may use the definition via the weighted second order differential quotient in ~\eqref{def_2nd}.
We denote by $\LL u$ the integral in~\eqref{def_2nd}, that is
$$
\LL u (x) = -\frac{1}{2}\, C(n,s)\,\int_{\R^n}\frac{u(x+y)+u(x-y)-2u(x)}{|y|^{n+2s}}\, dy,
$$
with $C(n,s)$ as in \eqref{def_c}.\\
 $\LL$ is a linear operator and we are looking for its ``symbol'' (or ``multiplier''), that is a function $\mathcal{S}:\R^n\to\R$ such that
\begin{equation}\label{eq_anti}
\LL u = \FF^{-1}(\mathcal{S}(\FF u)).
\end{equation}

We want to prove that
\begin{equation}\label{eq_symbolbut}
\mathcal{S}(\xi) \, = \, |\xi|^{2s},
\end{equation}
where we denoted by $\xi$ the frequency variable.
\vspace{2mm}
To this scope,
we point out that
\begin{eqnarray*}
&& \!\!\!\!\!\!\!\!\! \frac{|u(x+y)+u(x-y)-2u(x)|}{|y|^{n+2s}}\\
&& \quad \le \, 4\Big( \chi_{B_1}(y) |y|^{2-n-2s} \sup_{B_1(x)} |D^2u|
+\chi_{\R^n\setminus B_1}(y) |y|^{-n-2s} \sup_{\R^n} |u|\Big)\\
&& \quad \le\, C\Big( \chi_{B_1}(y) |y|^{2-n-2s} (1+|x|^{n+1})^{-1}+
\chi_{\R^n\setminus B_1}(y) |y|^{-n-2s} \Big)
\,\in \, L^1 (\R^{2n}).
\end{eqnarray*}
Consequently, by the Fubini-Tonelli's Theorem, we can exchange
the integral in~$y$ with the Fourier transform in~$x$.
Thus, we apply the Fourier transform in the variable $x$ in~\eqref{eq_anti} and we obtain
\begin{eqnarray}\label{eq_12}
\mathcal{S}(\xi)(\FF u)(\xi) \! & = & \! \FF(\LL u) 
\nonumber \\[1ex]
& = & \! -\frac{1}{2}\, C(n,s)\int_{\R^n} \frac{\FF\left(u(x+y)+u(x-y)-2u(x)\right)}{|y|^{n+2s}}\, dy \nonumber \\
\nonumber \\
& = & \! -\frac{1}{2}\, C(n,s) \int_{\R^n} \frac{e^{i\xi\cdot y}+ e^{-i\xi\cdot y} -2}{|y|^{n+2s}}\, dy (\FF u)(\xi) \nonumber \\
\nonumber \\
& = & \! C(n,s)\int_{\R^n} \frac{1-\cos(\xi\cdot y)}{|y|^{n+2s}}\,  dy (\FF u)(\xi).
\end{eqnarray}
Hence, in order to obtain~\eqref{eq_symbolbut}, it suffices to show that
\begin{equation}\label{eq_15}
\int_{\R^n}\frac{1-\cos(\xi\cdot y)}{|y|^{n+2s}}\, dy \, = \, C(n,s)^{-1}|\xi|^{2s} .
\end{equation}
To check this, first we observe that, if $\zeta=(\zeta_1, ..., \zeta_n) \in \R^n$, we have
$$
\frac{1-\cos{\zeta_1}}{|\zeta|^{n+2s}} \, \le \, \frac{|\zeta_1|^2}{|\zeta|^{n+2s}} \, \le \, \frac{1}{|\zeta|^{n-2+2s}}
$$
near $\zeta=0$. Thus,
\begin{equation}\label{eq_16}
\int_{\R^n}\frac{1-\cos{\zeta_1}}{|\zeta|^{n+2s}}\, d\zeta \ \, \text{is finite and positive.}
\end{equation}
Now, we consider the function $\mathcal{I}:\R^n\to \R$ defined as follows
$$
\mathcal{I}(\xi)=\int_{\R^n}\frac{1-\cos{(\xi\cdot y)}}{|y|^{n+2s}}dy.
$$
We have that $\mathcal{I}$ is rotationally invariant, that is
\begin{equation}\label{eq_17}
\mathcal{I}(\xi) = \mathcal{I}(|\xi|e_1),
\end{equation}
where $e_1$ denotes the first direction vector in $\R^n$.
Indeed, when $n=1$, then we can deduce~\eqref{eq_17} by the fact that $\mathcal{I}(-\xi)=\mathcal{I}(\xi)$. When $n\geq 2$, we consider a rotation $R$ for which $\dys R(|\xi|e_1)=\xi$ and we denote by $R^T$ its transpose. Then, by substituting $\tilde{y}=R^Ty$, we obtain
\begin{eqnarray*}
\mathcal{I}(\xi)\! & = & \! \int_{\R^n}\frac{1-\cos{\big((R(|\xi|e_1))\cdot y\big)}}{|y|^{n+2s}}\, dy \\
& = & \! \int_{\R^n}\frac{1-\cos{\big((|\xi|e_1)\cdot (R^Ty)\big)}}{|y|^{n+2s}}\, dy \\
& = & \! \int_{\R^n}\frac{1-\cos{\big((|\xi|e_1)\cdot \tilde{y}\big)}}{|\tilde{y}|^{n+2s}}\, d\tilde{y} \ = \ \mathcal{I}(|\xi|e_1),
\end{eqnarray*}
which proves~\eqref{eq_17}.
\vspace{1mm}

As a consequence of~\eqref{eq_16} and~\eqref{eq_17}, the substitution $\zeta=|\xi|y$ gives that
\begin{eqnarray*}
\mathcal{I}(\xi) \! & = & \! \mathcal{I}(|\xi|e_1) \\
& = & \! \int_{\R^n}\frac{1-\cos{(|\xi|y_1)}}{|y|^{n+2s}}\, dy \\
& = & \! \frac{1}{|\xi|^n}\int_{\R^n}\frac{1-\cos{\zeta_1}}{\big|\zeta/|\xi|\big|^{n+2s}}\, d\zeta \ = \ C(n,s)^{-1}|\xi|^{2s}.
\end{eqnarray*}
where we recall that $C(n,s)^{-1}$ is equal to $\dys \int_{\R^n} \frac{1-\cos(\zeta_1)}{|\zeta|^{n+2s}}\,d\zeta\,$ by \eqref{def_c}.
Hence,~we deduce \eqref{eq_15} and then the proof is complete.
\end{proof}
\vspace{1mm}

\begin{prop}\label{pro_equiv}
Let $s\in (0,1)$. Then the fractional Sobolev space $H^s(\R^n)$ defined in Section \ref{definizioni} 
coincides with $\hat{H}^s(\R^n)$ defined in~\eqref{def_viafourier}. In particular, for any $u\in H^s(\R^n)$
\begin{equation*}
[u]^2_{H^s(\R^n)} \, = \,  2C(n,s)^{-1}\!\int_{\R^n} |\xi|^{2s} |\FF u(\xi)|^2\, d\xi.
\end{equation*}
where $\dys C(n,s)$ is defined by~\eqref{def_c}.
\end{prop}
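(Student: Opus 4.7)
The plan is to prove the displayed identity first for $u\in\SF(\R^n)$ by a Fubini--Plancherel computation, and then pass to all of $H^s(\R^n)$ by density. The key observation is that the inner Fourier integral which will emerge has already been evaluated inside the proof of Proposition~\ref{pro_symbol}.

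For $u\in\SF(\R^n)$, I would first change variables $z=y-x$ in the double integral to write
$$[u]^2_{H^s(\R^n)}=\int_{\R^n}\!\int_{\R^n}\frac{|u(x+z)-u(x)|^2}{|z|^{n+2s}}\,dx\,dz,$$
and justify Fubini--Tonelli on $\R^n\times\R^n$: the joint integrand is integrable for Schwartz $u$ by a second order Taylor bound near $z=0$ combined with the rapid decay of $u$ for $|z|$ large, in the same spirit as the estimate used in Lemma~\ref{lem_2nd}. For each fixed $z$, I would then apply Plancherel in $x$. Under the convention chosen in Section~\ref{definizioni}, $\FF(u(\cdot+z))(\xi)=e^{iz\cdot\xi}\FF u(\xi)$, so
$$\int_{\R^n}|u(x+z)-u(x)|^2\,dx=\int_{\R^n}|e^{iz\cdot\xi}-1|^2|\FF u(\xi)|^2\,d\xi=2\int_{\R^n}(1-\cos(z\cdot\xi))|\FF u(\xi)|^2\,d\xi.$$
Substituting back and swapping the order of integration once more gives
$$[u]^2_{H^s(\R^n)}=2\int_{\R^n}\Bigl(\int_{\R^n}\frac{1-\cos(z\cdot\xi)}{|z|^{n+2s}}\,dz\Bigr)|\FF u(\xi)|^2\,d\xi,$$
and the bracketed inner integral equals $C(n,s)^{-1}|\xi|^{2s}$ precisely by the identity \eqref{eq_15} established in the proof of Proposition~\ref{pro_symbol}. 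This yields the announced formula on $\SF(\R^n)$.

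To conclude, I would invoke density. By Theorem~\ref{density}, $\SF(\R^n)\supset C_0^\infty(\R^n)$ is dense in $H^s(\R^n)$, and a standard Fourier-analytic argument shows it is dense in $\hat H^s(\R^n)$ as well (approximate $\FF u$ in the weighted $L^2$ space by Schwartz functions and take the inverse transform). The identity just proved says that on $\SF(\R^n)$ the norm of $H^s(\R^n)$ and the norm of $\hat H^s(\R^n)$ are equivalent, differing only by the explicit factor $2C(n,s)^{-1}$ on the seminorm part (after using Plancherel to identify $\|u\|_{L^2}=\|\FF u\|_{L^2}$). Since both spaces are the completion of $\SF(\R^n)$ under equivalent norms, they coincide as sets and the identity extends by continuity to every $u\in H^s(\R^n)$.

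The main obstacle is the careful justification of the two interchanges of integration and the matching of the two density completions; once these measure-theoretic steps are in place, the core of the argument is simply the explicit Fourier integral already computed in Proposition~\ref{pro_symbol}, which is why that computation was singled out there.
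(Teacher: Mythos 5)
Your proof is correct and the core of it is identical to the paper's: change variables, apply Plancherel in the $x$ variable for fixed $z$, use the shift formula $\FF(u(\cdot+z))=e^{iz\cdot\xi}\FF u$, swap the order of integration, and evaluate the inner $z$-integral via the identity~\eqref{eq_15} established inside Proposition~\ref{pro_symbol}. The one difference is structural: you restrict the computation to $\SF(\R^n)$, justify Fubini via Taylor estimates, and then pass to all of $H^s(\R^n)$ by a density/completion argument. The paper instead performs the computation directly for a general $u$, which is cleaner here: every integrand that appears ($|u(x)-u(y)|^2/|x-y|^{n+2s}$, $|e^{iz\cdot\xi}-1|^2|\FF u(\xi)|^2/|z|^{n+2s}$, $(1-\cos(z\cdot\xi))|\FF u(\xi)|^2/|z|^{n+2s}$) is nonnegative, so Tonelli's theorem gives the interchange of integrals unconditionally (your worry about Fubini is unnecessary), and Plancherel holds on all of $L^2(\R^n)$, not just $\SF$. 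This yields the identity for every $u\in L^2(\R^n)$ with both sides simultaneously finite or infinite, and the coincidence $H^s=\hat{H}^s$ as sets drops out immediately without needing to verify density of $\SF$ in $\hat H^s$ or to match two completions. Your density route is logically sound but introduces an extra layer of work (density of $\SF$ in $\hat H^s$, identification of completions) that the direct Tonelli/Plancherel argument sidesteps.
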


\begin{proof}
For every fixed $y\in \R^n$, by changing of variable choosing $z=x-y$, we get
\begin{eqnarray*}
\int_{\R^n}\,\left(\int_{\R^n} \frac{|u(x)-u(y)|^2}{|x-y|^{n+2s}}\,dx\right)\,dy \!&=&\!\int_{\R^n} \int_{\R^n} \frac{|u(z+y)-u(y)|^2}{|z|^{n+2s}} \,dz\,dy \nonumber \\
&=& \!\int_{\R^n} \left(\int_{\R^n} \left|\frac{u(z+y)-u(y)}{|z|^{n/2+s}}\right|^2 \,dy\,\right)dz \nonumber\\
&=& \!\int_{\R^n} \, \left\| \frac{u(z+\cdot)-u(\cdot)}{|z|^{n/2+s}} \right\|^2_{L^2(\R^n)}  dz \nonumber\\
&=& \!\int_{\R^n} \, \left\| \FF \left(\frac{u(z+\cdot)-u(\cdot)}{|z|^{n/2+s}} \right) \right\|^2_{L^2(\R^n)} dz, \nonumber
\end{eqnarray*}
where Plancherel Formula has been used.
\vspace{2mm}

Now, using \eqref{eq_15} we obtain
\begin{eqnarray*}
&& \int_{\R^n}  \left\| \FF \left(\frac{u(z+\cdot)-u(\cdot)}{|z|^{n/2+s}} \right) \right\|^2_{L^2(\R^n)}   \!dz \nonumber \\
\\
&& \qquad \qquad \qquad \qquad = \int_{\R^n} \int_{\R^n} \frac{|e^{i\xi \cdot z}-1|^2}{|z|^{n+2s}}\, |\FF u(\xi)|^2 \, d\xi   \,dz\\
\\
&&  \qquad \qquad \qquad \qquad = \, 2\,\int_{\R^n} \int_{\R^n} \frac{(1-\cos \xi \cdot z)}{|z|^{n+2s}}\, |\FF u(\xi)|^2 \, dz\, d\xi  \\
\\
&&  \qquad \qquad \qquad \qquad = \, 2C(n,s)^{-1} \int_{\R^n} |\xi|^{2s}\, |\FF u(\xi)|^2 \, d\xi.
\end{eqnarray*}

This completes the proof.\end{proof}

\vspace{1mm}

\begin{rem}
The equivalence of the spaces $H^s$ and $\hat{H}^s$ stated in~Proposition~\ref{pro_equiv} relies on Plancherel Formula. 
As well known, unless $p=q=2$, one cannot go forward and backward between an $L^p$ and an $L^q$ via Fourier transform (see, for instance, the sharp inequality in~\cite{Bec75} for the case $1<p<2$ and $q$ equal to the conjugate exponent $p/(p-1)$\,). That is why the general fractional space defined via Fourier transform for $1<p<\infty$ and $s>0$, say  ${H}^{s,p}(\R^n)$, does not coincide with the fractional Sobolev spaces $W^{s,p}(\R^n)$ and will be not discussed here (see, e.g., \cite{Zie89}).
\end{rem}
\vspace{1mm}
Finally, we are able to prove the relation between the fractional Laplacian operator $(-\Delta)^s$ and the fractional Sobolev space $H^s$.

\begin{prop}\label{prop_frac}
Let $s\in (0,1)$ and let $u\in H^s(\R^n)$. Then,
\begin{equation}\label{aaaeleono}
[u]^2_{H^s(\R^n)}\,=\, 2C(n,s)^{-1}\|(-\Delta)^{\frac{s}{2}}u\|^2_{L^2(\R^n)}.
\end{equation}
where $ C(n,s)$ is defined by~\eqref{def_c}.
\end{prop}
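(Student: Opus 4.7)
The plan is to chain together the two previous results, Proposition \ref{pro_symbol} and Proposition \ref{pro_equiv}, using Plancherel's formula as the bridge.

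First, I would apply Proposition \ref{pro_symbol} with the exponent $s/2$ in place of $s$ (note $s/2 \in (0,1)$), which yields, for any $u \in \SF$,
$$
(-\Delta)^{s/2} u \,=\, \FF^{-1}\bigl(|\xi|^{s}\FF u\bigr),
$$
and therefore $\FF\bigl((-\Delta)^{s/2}u\bigr)(\xi)=|\xi|^{s}\FF u(\xi)$. By Plancherel's formula this gives
$$
\|(-\Delta)^{s/2}u\|_{L^2(\R^n)}^{2}\,=\,\int_{\R^n}|\xi|^{2s}\,|\FF u(\xi)|^{2}\,d\xi.
$$

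Next, I would invoke Proposition \ref{pro_equiv}, which states
$$
[u]^2_{H^s(\R^n)}\,=\,2C(n,s)^{-1}\int_{\R^n}|\xi|^{2s}\,|\FF u(\xi)|^{2}\,d\xi.
$$
Substituting the previous identity into this expression produces \eqref{aaaeleono} immediately.

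The only subtlety is that Propositions \ref{pro_symbol} and \ref{pro_equiv} are formulated for $u\in\SF$, whereas here $u$ is only assumed to lie in $H^s(\R^n)$. I would handle this by a standard density argument: by Theorem \ref{density}, $\SF\subset C^\infty_0(\R^n)$ is dense in $H^s(\R^n)=W^{s,2}(\R^n)$, so pick $u_k\in\SF$ with $u_k\to u$ in $H^s(\R^n)$. Then $[u_k]_{H^s(\R^n)}\to[u]_{H^s(\R^n)}$, while Plancherel gives $|\xi|^{s}\FF u_k\to|\xi|^{s}\FF u$ in $L^2(\R^n)$, so the right-hand side passes to the limit as well. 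This upgrades the identity from the Schwartz class to all of $H^s(\R^n)$, with the understanding that $(-\Delta)^{s/2}u$ is defined for $u\in H^s$ precisely as the $L^2$ function whose Fourier transform equals $|\xi|^{s}\FF u$. The main (and essentially only) conceptual point is this passage from the pointwise principal-value definition on $\SF$ to the Fourier-multiplier definition on $H^s$, which is exactly what Proposition \ref{pro_symbol} provides.
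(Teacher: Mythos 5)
Your proposal is correct and follows essentially the same route as the paper: apply Proposition~\ref{pro_symbol} with $s/2$ in place of $s$, pass to Fourier space via Plancherel, and conclude by Proposition~\ref{pro_equiv}. The only addition is your explicit density argument extending the identity from $\SF$ to $H^s(\R^n)$, which the paper leaves implicit but which is a welcome clarification.
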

\begin{proof} The equality in \eqref{aaaeleono} plainly follows from Proposition~\ref{pro_symbol} and Proposition~\ref{pro_equiv}. Indeed, 
\begin{eqnarray*}
\| (-\Delta)^{\frac{s}{2}} u \|^2_{L^2(\R^n)} \!&=&\! \| \FF (-\Delta)^{\frac{s}{2}} u \|^2_{L^2(\R^n)} \, = \, \| |\xi|^s \FF u \|^2_{L^2(\R^n)} \\
\\
& =& \! \frac{1}{2}C(n,s) [ u ]^2_{{H}^s(\R^n)}\,.\qedhere
\end{eqnarray*}
\end{proof}

\vspace{0.5mm}

\begin{rem}
In the same way as the fractional Laplacian~$(-\Delta)^s$ is related to the space $W^{s,2}$ \big(as its Euler-Lagrange equation or from the formula $\|u\|^2_{W^{s,2}} = \int u (-\Delta)^s u\,dx$\,\big), a more general integral operator can be defined that is related to the space $W^{s,p}$ for any $p$ (see the recent paper~\cite{IN10}).
\end{rem}

\vspace{3mm}

Armed with the definition of $H^s(\R^n)$ via the Fourier transform, we can easily analyze the traces of the Sobolev functions (see the forthcoming~Proposition~\ref{pro_tracce}). We will follow Sections 13, 15 and 16 in~\cite{Tar07}.

\vspace{2mm}

Let $\Omega\subseteq\R^n$ be an open set with continuous boundary $\partial\Omega$. Denote by $T$ the {\it trace operator}, namely the linear operator defined by the uniformly continuous extension of the operator of restriction to $\partial\Omega$ for functions in~$\mathcal{D}(\overline{\Omega})$, that is the space of functions~$C^{\infty}_0(\R^n)$ restricted\footnote{
Notice that we cannot simply take $T$ as the restriction operator to the boundary, since the restriction to a set of measure $0$ (like the set $\partial\Omega$) is not defined for functions which are not smooth enough.
}
to $\overline{\Omega}$.

\vspace{2mm}

Now, for any $x=(x',x_n)\in \R^n$ and for any $u \in \SF(\R^n)$, we denote by $v\in\SF(\R^{n-1})$ the restriction of $u$ on the hyperplane $x_n=0$, that is
\begin{equation}\label{def_traccia}
\dys v(x')=u(x',0) \ \ \ \forall x'\in \R^{n-1}.
\end{equation}
Then, we have
\begin{equation}\label{eq_tar1520}
\dys
\FF v(\xi') \, = \, \int_{\R} \FF u(\xi',\xi_n)\, d\xi_n \ \ \ \forall \xi' \in \R^{n-1},
\end{equation}
where, for the sake of simplicity, we keep the same symbol $\FF$ for both the Fourier transform in $n-1$ and in $n$ variables.
\vspace{2mm}

To check~\eqref{eq_tar1520}, we write
\begin{eqnarray}\label{eq_m1}
\dys
\FF v(\xi') \! & = & \! \frac{1}{(2\pi)^{\frac{n-1}{2}}}\int_{\R^{n-1}} e^{-i\xi'\cdot x'} v(x')\, dx' \nonumber \\
\nonumber \\
& = & \! \frac{1}{(2\pi)^{\frac{n-1}{2}}}\int_{\R^{n-1}} e^{-i\xi'\cdot x'} u(x',0)\, dx'.
\end{eqnarray}
\vspace{1mm}

On the other hand, we have
\begin{eqnarray}\label{eq_m2}
\dys
&& \int_\R \FF u(\xi',\xi_n)\, d\xi_n \! \! \nonumber \\
\nonumber \\
&& \qquad \quad  =\ \int_\R \frac{1}{(2\pi)^{\frac{n}{2}}}\int_{\R^n}\!e^{-i\,(\xi',\xi_n)\cdot(x', x_n)} u(x',x_n)\, dx'\, dx_n\, d\xi_n \nonumber \\
\nonumber \\
&& \qquad \quad = \ \frac{1}{(2\pi)^{\frac{n-1}{2}}}\!\int_{\R^{n-1}}\!e^{-i \xi'\cdot x'}\!\left[ \frac{1}{(2\pi)^{\frac{1}{2}}}\!\int_\R \!\int_\R e^{-i \xi_n\cdot x_n} u(x', x_n)\, d x_n d\xi_n\right] \!dx' \nonumber \\
\nonumber \\
&& \qquad \quad = \ \frac{1}{(2\pi)^{\frac{n-1}{2}}}\!\int_{\R^{n-1}}\!e^{-i \xi'\cdot x'}\!\big[ u(x',0)\big] dx', \nonumber
\end{eqnarray}
where the last equality follows by transforming and anti-transforming $u$ in the last variable, and this coincides with~\eqref{eq_m1}.

\vspace{1mm}

Now, we are in position to characterize the traces of the function in~$H^s(\R^n)$, as stated in the following proposition.
\begin{prop}\label{pro_tracce}{\rm (\cite[Lemma 16.1]{Tar07}).}
Let $s>1/2$, then any function $u \in H^s(\R^n)$ has a trace $v$ on the hyperplane $\big\{x_n=0\big\}$, such that~$v \in H^{s-\frac{1}{2}}(\R^{n-1})$. Also, the trace operator $T$ is surjective from $H^s(\R^n)$ onto $H^{s-\frac{1}{2}}(\R^{n-1})$.
\end{prop}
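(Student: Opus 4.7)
The plan is to work entirely on the Fourier side, exploiting the characterization of $H^s$ from Proposition \ref{pro_equiv} (equivalently, $\|u\|_{H^s}^2$ is comparable to $\int_{\R^n}(1+|\xi|^2)^s |\FF u(\xi)|^2 d\xi$) together with the identity \eqref{eq_tar1520} that expresses $\FF v(\xi')$ as a one-dimensional integral of $\FF u(\xi',\xi_n)$. Two things must be proved: continuity of $T$ into $H^{s-1/2}(\R^{n-1})$, and the existence of a bounded right inverse. Throughout, Theorem \ref{density} reduces everything to smooth functions, for which \eqref{eq_tar1520} is genuine.

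For continuity, I would insert the weight $(1+|\xi|^2)^{s/2}$ inside the integral \eqref{eq_tar1520} and apply Cauchy--Schwarz in $\xi_n$:
\begin{equation*}
|\FF v(\xi')|^2 \leq \left(\int_\R (1+|\xi|^2)^{-s}\, d\xi_n\right) \left(\int_\R (1+|\xi|^2)^s |\FF u(\xi',\xi_n)|^2\, d\xi_n\right).
\end{equation*}
The substitution $\xi_n = (1+|\xi'|^2)^{1/2} t$ evaluates the first factor as $C_s\, (1+|\xi'|^2)^{1/2-s}$, where $C_s := \int_\R (1+t^2)^{-s}\, dt$. This $C_s$ is finite \emph{precisely because $s>1/2$}; that is where the hypothesis is consumed. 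Multiplying by $(1+|\xi'|^2)^{s-1/2}$ and integrating in $\xi'$, Fubini delivers
\begin{equation*}
\|v\|_{H^{s-1/2}(\R^{n-1})}^2 \leq C_s\, \|u\|_{H^s(\R^n)}^2,
\end{equation*}
which is the desired continuity estimate.

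For surjectivity, given $v \in H^{s-1/2}(\R^{n-1})$, I would construct a lift $u$ by prescribing its Fourier transform. Fix $\rho$ in the Schwartz class on $\R$ with $\int_\R \rho(t)\,dt = 1$ and define
\begin{equation*}
\FF u(\xi',\xi_n) \,:=\, \frac{1}{(1+|\xi'|^2)^{1/2}}\, \FF v(\xi')\, \rho\!\left(\frac{\xi_n}{(1+|\xi'|^2)^{1/2}}\right).
\end{equation*}
The same substitution $\xi_n = (1+|\xi'|^2)^{1/2}\, t$ gives $\int_\R \FF u(\xi',\xi_n)\, d\xi_n = \FF v(\xi')$, so $Tu=v$ by \eqref{eq_tar1520}. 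Exploiting the factorization $1+|\xi'|^2+\xi_n^2 = (1+|\xi'|^2)(1+t^2)$ after the substitution, one computes
\begin{equation*}
\int_{\R^n}(1+|\xi|^2)^s |\FF u(\xi)|^2\, d\xi \,=\, \left(\int_\R (1+t^2)^s |\rho(t)|^2\, dt\right) \int_{\R^{n-1}} (1+|\xi'|^2)^{s-1/2} |\FF v(\xi')|^2\, d\xi',
\end{equation*}
which is a finite multiple of $\|v\|_{H^{s-1/2}(\R^{n-1})}^2$, so $u\in H^s(\R^n)$.

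The one genuinely subtle point is the role of the threshold $s>1/2$: it is exactly what makes $C_s$ finite and, symmetrically, what allows the weight $(1+|\xi'|^2)^{s-1/2}$ to appear with the correct sign in the lift. No other obstacle is expected; once one writes \eqref{eq_tar1520} and distributes the $(1+|\xi|^2)^s$ weight between base and fiber in a scale-covariant way, both the trace estimate and its inverse are essentially forced by the same one-dimensional change of variables.
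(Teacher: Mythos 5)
Your proposal is correct and follows essentially the same route as the paper: Cauchy--Schwarz applied to the fiberwise integral \eqref{eq_tar1520} with the weight $(1+|\xi|^2)^s$ distributed across the two factors, the substitution $\xi_n = t\sqrt{1+|\xi'|^2}$ to evaluate the resulting one-dimensional integral (this is where $s>1/2$ enters), and the explicit Fourier-side lift for surjectivity. The only cosmetic difference is that the paper takes the cutoff profile $\varphi$ in $C_0^\infty(\R)$ rather than in the Schwartz class, which changes nothing.
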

\begin{proof}
In order to prove the first claim, it suffices to show that there exists an universal constant $C$ such that, for any $u\in \SF(\R^n)$ and any $v$ defined as in~\eqref{def_traccia},
\begin{equation}\label{eq_trasob}
\dys
\|v\|_{H^{s-\frac{1}{2}}(\R^{n-1})} \, \leq \, C \|u\|_{H^s(\R^n)}.
\end{equation}
By taking into account~\eqref{eq_tar1520}, the Cauchy-Schwarz inequality yields
\begin{equation}\label{eq_tar161}
\dys
|\FF v(\xi')|^2 \, \leq \, \left(\int_{\R}(1+|\xi|^2)^s|\FF u(\xi',\xi_n)|^2\, d\xi_n\right)\left(\int_{\R}\frac{d\xi_n}{(1+|\xi|^2)^s}\right).
\end{equation}
Using the changing of variable formula by setting $\dys \xi_n = t\sqrt{1+|\xi'|^2}$, we have
\begin{eqnarray}\label{eq_tar162}
\dys
\int_{\R}\frac{d\xi_n}{(1+|\xi|^2)^s} \! &  = & \! \int_{\R}\frac{\big(1+|\xi'|^2\big)^{1/2}}{\big((1+|\xi'|^2)(1+t^2)\big)^s}\, dt \  = \  \int_{\R} \frac{\big(1+|\xi'|\big)^{\frac{1}{2}-s}}{(1+t^2)^s}\, dt \nonumber \\
\nonumber \\
& = & \! C(s) \big(1+|\xi'|^2\big)^{\frac{1}{2}-s},
\end{eqnarray}
where $\dys C(s):=\int_\R \frac{dt}{(1+t^2)^s} \, < \, +\infty$ since $s>1/2$.
\vspace{1mm}

Combining~\eqref{eq_tar161} with~\eqref{eq_tar162} and integrating in~$\xi'\in\R^{n-1}$, we obtain
\begin{eqnarray*}
&& \dys
\int_{\R^{n-1}}\big(1+|\xi'|^2\big)^{s-\frac{1}{2}}|\FF v(\xi')|^2\, d\xi' \\
&&  \qquad \qquad \qquad \quad \leq \, C(s)\int_{\R^{n-1}}\int_\R \big(1+|\xi|^2\big)^s |\FF u(\xi',\xi_n)|^2\, d\xi_n\, d\xi',
\end{eqnarray*}
that is~\eqref{eq_trasob}.
\vspace{2mm}

Now, we will prove the surjectivity of the trace operator $T$. For this, we show that for any $v\in H^{s-\frac{1}{2}}(\R^{n-1})$ the function $u$ defined by
\begin{equation}\label{eq_tar164}
\dys
\FF u(\xi',\xi_n) \, = \, \FF v(\xi')\, \varphi\!\left( \frac{\xi_n}{\sqrt{1+|\xi'|^2}}\right)\frac{1}{\sqrt{1+|\xi'|^2}},
\end{equation}
with $\varphi\in C^{\infty}_0(\R)$ and $\dys \int_\R \varphi(t)\, dt = 1$, is such that $u\in H^s(\R^n)$ and $Tu=v$. Indeed, we integrate~\eqref{eq_tar164} with respect to $\xi_n\in\R$, we substitute $\xi_n= t \sqrt{1+|\xi'|^2}$ and we obtain
\begin{eqnarray}\label{eq_stellina}
\dys
\int_{\R}\FF u(\xi',\xi_n)\, d\xi_n \! & = & \! \int_{\R}\FF v(\xi')\, \varphi\!\left( \frac{\xi_n}{\sqrt{1+|\xi'|^2}}\right)\frac{1}{\sqrt{1+|\xi'|^2}}\, d\xi_n \nonumber \\
\nonumber \\
& = & \! \int_{\R}\FF v(\xi')\, \varphi(t)\,dt \ = \ \FF v(\xi')
\end{eqnarray}
and this implies $v= Tu$ because of~\eqref{eq_tar1520}.
\vspace{1mm}

The proof of the $H^s$-boundedness of $u$ is straightforward. In fact, from \eqref{eq_tar164}, for any $\xi'\in \R^{n-1}$, we have
\begin{eqnarray}\label{eq_tar165}
\dys
&& \int_\R \big(1+|\xi|^2\big)^s |\FF u (\xi', \xi_n)|^2\, d\xi_n \nonumber \\
&& \qquad \qquad \qquad  =  \ \int_\R \big(1+|\xi|^2\big)^s |\FF v(\xi')|^2  \left|\varphi\!\left(\frac{\xi_n}{\sqrt{1+|\xi'|^2}}\right)\right|^2\!\frac{1}{{1+|\xi'|^2}}\,d\xi_n \nonumber \\
 \nonumber \\
& & \qquad \qquad \qquad = \ C\big(1+|\xi'|^2)^{s-\frac{1}{2}} |\FF v(\xi')|^2,
\end{eqnarray}
where we used again the changing of variable formula with $\dys \xi_n= t\sqrt{1+|\xi'|^2}$ and
the constant~$C$ is given by $\dys \int_\R\big(1+t^2\big)^s |\varphi(t)|^2\, dt$. 
Finally, we obtain that $u\in H^s(\R^n)$ by integrating~\eqref{eq_tar165} in $\xi'\in\R^{n-1}$.
\end{proof}

\vspace{3mm}

\begin{rem}
We conclude this section by recalling that the fractional Laplacian $(-\Delta)^s$, which is a nonlocal operator on functions defined in~$\R^n$, may be reduced to a local, possibly singular or degenerate, operator on functions sitting in the higher dimensional half-space~$\R^{n+1}_+=\R^n\times(0,+\infty)$. We have
$$
(-\Delta)^s u(x) \, = \, - C\lim_{t\to 0}\left(t^{1-2s}\frac{\partial U}{\partial t}(x,t)\right),
$$
where the function $U:\R^{n+1}_+\to\R$ solves $\textrm{div}(t^{1-2s}\nabla U)=0$ in $\R^{n+1}_+$ and $U(x,0)=u(x)$ in~$\R^n$.
\vspace{1mm}

This approach was pointed out by Caffarelli and Silvestre~in~\cite{CS07}; see, in particular, Section 3.2 there, where was also given an equivalent definition of the $H^s(\R^n)$-norm:
$$
\dys
\int_{\R^n} |\xi|^{2s} |\FF u|^2\, d\xi \, = \, C \int_{\R^{n+1}_+} |\nabla U|^2 t^{1-2s}\, dx\, dt.
$$

The cited results turn out to be very fruitful in order to recover
an elliptic PDE approach in a nonlocal framework,
and they have recently been used very often (see, 
e.g.,~\cite{CSS08, SiV09, CC10, CRS10b, PS10}, etc.).

\end{rem}

\vspace{3mm}

\section{Asymptotics of the constant $C(n,s)$}\label{sec_cost}
In this section, we go into detail on the constant factor $C(n,s)$ that appears in the definition of the fractional Laplacian (see~\eqref{def_laplacian}), by analyzing 
 its asymptotic behavior as $s\to 1^{-}$ and $s\to 0^{+}$. This is relevant if one wants to recover the Sobolev norms of the spaces $H^{1}(\R^n)$ and $L^2(\R^n)$ by starting from the one of~$H^s(\R^n)$. 
\vspace{1mm}

We recall that in Section~\ref{sec_laplacian}, the constant~$C(n,s)$ has been defined by
$$
C(n,s) = \left( \int_{\R^n} \frac{1-\cos(\zeta_1)}{|\zeta|^{n+2s}}\,d\zeta \right)^{\!-1}.
$$
Precisely, we are interested in analyzing 
the asymptotic behavior as $s\to 0^+$ and $s\to 1^-$ of a  scaling of the quantity in the right hand-side of the above formula.

\vspace{1mm}

By changing variable $\eta'=\zeta^{'}/|\zeta_1|$, we have
\begin{eqnarray*}
\int_{\R^n} \frac{1-\cos(\zeta_1)}{|\zeta|^{n+2s}}\,d\zeta
 &=& \int_{\R} \int_{\R^{n-1}} \frac{1-\cos(\zeta_1)}{|\zeta_1|^{n+2s}} 
      \frac{1}{(1+ |\zeta'|^2/|\zeta_1|^2)^{\frac{n+2s}{2}}} \,d\zeta' d\zeta_1  \\
 &=& \int_{\R} \int_{\R^{n-1}} \frac{1-\cos(\zeta_1)}{|\zeta_1|^{1+2s}} 
      \frac{1}{(1+ |\eta'|^2)^{\frac{n+2s}{2}}} \,d\eta' d\zeta_1  \\ 
 \vspace{2mm}     
 &=& \frac{A(n,s)\, B(s)}{s(1-s)}      
\end{eqnarray*}
where 
\begin{equation}\label{a}
A(n,s)= \int_{\R^{n-1}} \frac{1}{(1+ |\eta'|^2)^{\frac{n+2s}{2}}} \,d\eta'
\end{equation}
 and\footnote{Of course, when $n=1$ (\ref{a}) reduces to $A(n,s)=1$, so we will just consider the case $n>1$.}
\begin{equation}\label{b} 
B(s)= s(1-s)\int_{\R} \frac{1-\cos t}{|t|^{1+2s}} \,dt.
\end{equation}
\vspace{1mm}
\begin{prop}\label{asy_est}
For any $n> 1$, let $A$ and $B$ be defined by~\eqref{a} and~\eqref{b} respectively. The following statements hold:
\begin{itemize}
\item[{\rm $(i)$}] $\dys \lim_{s\to 1^{-}} A(n,s)= \omega_{n-2} \int_{0}^{+\infty} \!\frac{\rho^{n-2}}{(1+ \rho^2)^{\frac{n}{2}+1}} \,d\rho < + \infty$;
\item[{\rm $(ii)$}] $\dys \lim_{s\to 0^+} A(n,s)= \omega_{n-2} \int_{0}^{+\infty}\! \frac{\rho^{n-2}}{(1+ \rho^2)^{\frac{n}{2}}}\,d\rho < +\infty$;
\item[{\rm $(iii)$}] $\dys \lim_{s\to 1^{-}} B(s)=\frac{1}{2}$;
\item[{\rm $(iv)$}] $\dys \lim_{s\to 0^{+}} B(s)=1,$
\end{itemize}
where $\omega_{n-2}$ denotes $(n-2)$-dimensional measure of the unit sphere $S^{n-2}$. \\
As a consequence,
\begin{equation}\label{cost1}
\lim_{s\to 1^{-}} \frac{C(n,s)}{s(1-s)}= \left( \frac{\omega_{n-2}}{2} \int_{0}^{+\infty} \!\frac{\rho^{n-2}}{(1+ \rho^2)^{\frac{n}{2}+1}} \,d\rho \right)^{\!-1}
\end{equation}
and
\begin{equation}\label{cost2}
 \lim_{s\to 0^{+}} \frac{C(n,s)}{s(1-s)}= \left(\omega_{n-2} \int_{0}^{+\infty}\! \frac{\rho^{n-2}}{(1+ \rho^2)^{\frac{n}{2}}} \,d\rho \right)^{\!-1}.
\end{equation}
\end{prop}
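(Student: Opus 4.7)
The plan is to establish the four limits (i)--(iv) independently and then read off \eqref{cost1} and \eqref{cost2} from the identity $C(n,s)^{-1} = A(n,s)B(s)/[s(1-s)]$ computed immediately before the statement.

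For (i) and (ii), I would first pass to polar coordinates in $\R^{n-1}$ to rewrite
$$A(n,s) = \omega_{n-2}\int_0^{+\infty} \frac{\rho^{n-2}}{(1+\rho^2)^{(n+2s)/2}}\,d\rho.$$
The pointwise limits of the integrand as $s\to 1^-$ and $s\to 0^+$ are exactly the integrands appearing in (i) and (ii). To apply dominated convergence it suffices to note that for $s\in[1/2,1]$ the integrand is bounded by $\rho^{n-2}(1+\rho^2)^{-(n+1)/2}$ (which decays like $\rho^{-3}$ at infinity), while for $s\in[0,1/2]$ it is bounded by $\rho^{n-2}(1+\rho^2)^{-n/2}$ (decaying like $\rho^{-2}$); both dominating functions are integrable on $(0,+\infty)$.

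For (iii) and (iv), by symmetry $B(s) = 2 s(1-s)\bigl[I_1(s)+I_2(s)\bigr]$ with $I_1(s)=\int_0^1 (1-\cos t)\,t^{-1-2s}\,dt$ and $I_2(s)=\int_1^{+\infty}(1-\cos t)\,t^{-1-2s}\,dt$. As $s\to 1^-$, $I_2(s)$ converges to the finite integral $\int_1^{+\infty}(1-\cos t)/t^3\,dt$, so $s(1-s)I_2(s)\to 0$, and the main contribution comes from $I_1$. Using the Taylor expansion $1-\cos t = t^2/2 + O(t^4)$ on $[0,1]$, I would split
$$I_1(s) = \frac{1}{2}\int_0^1 t^{1-2s}\,dt + \int_0^1 \frac{(1-\cos t)-t^2/2}{t^{1+2s}}\,dt = \frac{1}{4(1-s)} + O(1),$$
since the remainder integrand is controlled by $C\,t^{3-2s}$ uniformly near $s=1$. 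Thus $2s(1-s)I_1(s)\to 1/2$. For (iv) the roles are reversed: $I_1(s)$ converges to $\int_0^1 (1-\cos t)/t\,dt<\infty$, while $I_2(s)$ carries the singular part. Writing $I_2(s) = \int_1^{+\infty} t^{-1-2s}\,dt - \int_1^{+\infty} \cos t\,t^{-1-2s}\,dt = 1/(2s) - R(s)$ and integrating $R(s)$ by parts once, one sees that $R(s)$ remains bounded as $s\to 0^+$, so $2s(1-s)I_2(s)\to 1$.

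Finally, from $C(n,s) = s(1-s)/[A(n,s)B(s)]$, combining (i) with (iii) yields \eqref{cost1} and combining (ii) with (iv) yields \eqref{cost2}. The only genuinely delicate point is isolating the singular leading behavior of $I_1(s)$ near $s=1$ and of $I_2(s)$ near $s=0$; once the Taylor splitting for $I_1$ and the trivial-plus-oscillatory splitting for $I_2$ are in place, everything else is dominated convergence together with the elementary integrations indicated above.
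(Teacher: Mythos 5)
Your proposal matches the paper's proof step by step: polar coordinates plus dominated convergence for (i)--(ii), and for (iii)--(iv) the same split of $B$ at $|t|=1$, isolating the singular pieces $\frac{1}{2}\int_0^1 t^{1-2s}\,dt$ and $\int_1^{+\infty} t^{-1-2s}\,dt$. The only variation is local and in your favour: to show the oscillatory remainder $R(s)=\int_1^{+\infty} \cos t\; t^{-1-2s}\,dt$ stays bounded as $s\to 0^+$ you integrate by parts once, which is shorter and cleaner than the paper's pairing of the intervals $[2k\pi,2(k+1)\pi]$ followed by a telescoping-series estimate.
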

\begin{proof}
First, by polar coordinates, for any $s\in (0,1)$, we get
\begin{eqnarray*}
\int_{\R^{n-1}} \frac{1}{(1+ |\eta'|^2)^{\frac{n+2s}{2}}} \,d\eta' =  \omega_{n-2} \int_{0}^{+\infty}\! \frac{\rho^{n-2}}{(1+ \rho^2)^{\frac{n+2s}{2}}}\,d\rho.
\end{eqnarray*}
Now, observe that for any $s\in (0,1)$ and any $\rho\geq0$, we have
$$
\frac{\rho^{n-2}}{(1+ \rho^2)^{\frac{n+2s}{2}}} \, \leq\, \frac{\rho^{n-2}}{(1+ \rho^2)^{\frac{n}{2}}}
$$
and the function in the right hand-side of the above inequality belongs to~$L^1((0,+\infty))$ for any~$n>1$.
\vspace{1mm}

Then, the Dominated Convergence Theorem yields
\begin{eqnarray*}
\lim_{s\to 1^-} A(n,s)& = & \omega_{n-2} \int_{0}^{+\infty}\! \frac{\rho^{n-2}}{(1+ \rho^2)^{\frac{n}{2}+1}} \,d\rho
\end{eqnarray*}
and
\begin{eqnarray*}
\lim_{s\to 0^+} A(n,s)&= & \omega_{n-2} \int_{0}^{+\infty}\! \frac{\rho^{n-2}}{(1+ \rho^2)^{\frac{n}{2}}}\,d\rho.
\end{eqnarray*}
\vspace{2mm}
This proves $(i)$ and $(ii)$. \\
Now, we want to prove $(iii)$. First, we split the integral in~\eqref{b} as follows
$$ 
\int_{\R} \frac{1-\cos t}{|t|^{1+2s}} \,dt= \int_{|t|< 1} \frac{1-\cos t}{|t|^{1+2s}} \,dt + 
 \int_{|t|\geq 1} \frac{1-\cos t}{|t|^{1+2s}} \,dt.
$$
Also, we have that
$$
0 \leq \int_{|t|\geq 1} \frac{1-\cos t}{|t|^{1+2s}} \,dt  \leq 4 \int_{1}^{+\infty} \frac{1}{t^{1+2s}}\,dt=\frac{2}{s}
$$
and
$$
 \int_{|t|<1} \frac{1-\cos t}{|t|^{1+2s}} \,dt - \int_{|t|<1} \frac{t^2}{2|t|^{1+2s}} \,dt \,
 \leq \, C \int_{|t|<1} \frac{|t|^3}{|t|^{1+2s}} \,dt \, = \, \frac{2C}{3-2s},
$$
for some suitable positive constant $C$.

\vspace{1mm}

From the above estimates it follows that 
$$\lim_{s\to 1^-} s(1-s) \int_{|t|\geq 1} \frac{1-\cos t}{|t|^{1+2s}} \,dt = 0
$$
and
$$
\lim_{s\to 1^-}  s(1-s) \int_{|t|<1} \frac{1-\cos t}{|t|^{1+2s}} \,dt  = \lim_{s\to 1^-}  s(1-s) \int_{|t|<1} \frac{t^2}{2|t|^{1+2s}} \,dt.
$$

Hence, we get
$$
\lim_{s\to 1^-} B(s)= \lim_{s\to 1^-} s(1-s) \left( \int_{0}^1 t^{1-2s} \,dt \right)=\lim_{s\to 1^-} \frac{s(1-s)}{2(1-s)}=\frac{1}{2}.
$$
\vspace{2mm}

Similarly, we can prove $(iv)$. For this we notice that 
$$
0 \leq  \int_{|t|<1} \frac{1-\cos t}{|t|^{1+2s}} \,dt  \, \leq C \! \int_{0}^{1} t^{1-2s} \,dt
$$
which yields
$$
\lim_{s\to 0^+} s(1-s) \int_{|t|<1} \frac{1-\cos t}{|t|^{1+2s}} \,dt \, =\, 0.
$$
\vspace{1mm}

Now, we observe that for any~$k\in\N$, $k\ge1$, we have
\begin{eqnarray*}
&& \left| \int_{2k\pi}^{2(k+1)\pi} \frac{\cos t}{t^{1+2s}}\,dt\right|\,=\,
\left| \int_{2k\pi}^{2k\pi+\pi} \frac{\cos t}{t^{1+2s}}\,dt
+
\int_{2k\pi}^{2k\pi+\pi} \frac{\cos (\tau+\pi)}{(\tau+\pi)^{1+2s}}\,d\tau
\right|\\
&&\qquad= \, 
\left| \int_{2k\pi}^{2k\pi+\pi} \cos t\left( \frac{1}{t^{1+2s}}
-\frac{1}{(t+\pi)^{1+2s}}\right)dt\right|\\
&&\qquad\le \,
\int_{2k\pi}^{2k\pi+\pi} \left| \frac{1}{t^{1+2s}}
-\frac{1}{(t+\pi)^{1+2s}}\right|\,dt
\\ &&\qquad=\,
\int_{2k\pi}^{2k\pi+\pi} 
\frac{(t+\pi)^{1+2s}-t^{1+2s}}{t^{1+2s}(t+\pi)^{1+2s}}\,dt
\\ &&\qquad= \,
\int_{2k\pi}^{2k\pi+\pi} 
\frac{1}{t^{1+2s}(t+\pi)^{1+2s}}\left( \int_0^\pi (1+2s) (t+\vartheta)^{2s}\,d\vartheta\right)dt      
\\ &&\qquad\le \,
\int_{2k\pi}^{2k\pi+\pi}
\frac{3\pi(t+\pi)^{2s}}{t^{1+2s}(t+\pi)^{1+2s}}\,dt\\
&& \qquad \le \, \int_{2k\pi}^{2k\pi+\pi}
\frac{3\pi}{t (t+\pi)}\,dt
\\ &&\qquad    
\le\, \int_{2k\pi}^{2k\pi+\pi}
\frac{3\pi}{t^2}\,dt
\, \le \,\frac{C}{k^2}.
\end{eqnarray*}
As a consequence,
\begin{eqnarray*}
\dys \left| \int_{1}^{+\infty} \frac{\cos t}{t^{1+2s}}\,dt\right|
\! & \le& \!
\int_{1}^{2\pi} \frac{1}{t}\,dt+\left|\sum_{k=1}^{+\infty}
\int_{2k\pi}^{2(k+1)\pi} \frac{\cos t}{t^{1+2s}}\,dt\right|\\
\\
& \le & \!\log(2\pi)+\sum_{k=1}^{+\infty} \frac{C}{k^2}
\, \le \, C,
\end{eqnarray*}
up to relabeling the constant~$C>0$.
\vspace{1mm}

It follows that
\begin{eqnarray*}
\left|\int_{|t|\geq 1} \frac{1-\cos t}{|t|^{1+2s}}\, dt -  \int_{|t|\geq 1} \frac{1}{|t|^{1+2s}}\, dt \right|
\! & =  & \! \left| \int_{|t|\geq 1} \frac{\cos t}{|t|^{1+2s}}\, dt \right| \\
\\
\! & = & \! 2 \left| \int_{1}^{+\infty} \frac{\cos t}{t^{1+2s}}\, dt \right| \, \leq \, C
\end{eqnarray*}
and then
$$
\dys \lim_{s\to 0^+} s(1-s)\int_{|t|\geq 1} \frac{1-\cos t}{|t|^{1+2s}} \,dt
\,= \, \lim_{s\to 0^+} s(1-s)\int_{|t|\geq 1} \frac{1}{|t|^{1+2s}}\, dt.
$$
\vspace{1mm}

Hence, we can conclude that
\begin{eqnarray*}
\lim_{s\to 0^+} B(s) \! &=& \! \lim_{s\to 0^+} s(1-s)\int_{|t|\geq 1} \frac{1}{|t|^{1+2s}}\, dt  \\
&=& \! \lim_{s\to 0^+} 2s(1-s)\int_{1}^{+\infty} t^{-1-2s}\, dt \\
&=& \! \lim_{s\to 0^+} \frac{2s(1-s)}{2s}=1.
\end{eqnarray*}\vspace{1mm}

Finally, (\ref{cost1}) and (\ref{cost2}) easily follow combining the previous estimates and recalling that
$$C(n,s)=\frac{s(1-s)}{A(n,s) B(s)}.$$

The proof is complete.\end{proof}

\begin{corollary}\label{asy_est1}
For any $n> 1$, let $C(n,s)$ be defined by~\eqref{def_c}. The following statements hold:
\begin{itemize}
\item[{\rm $(i)$}] $\dys \lim_{s\to 1^{-}} \frac{C(n,s)}{s(1-s)}= \frac{4n}{\omega_{n-1}}$;
\item[{\rm $(ii)$}] $\dys \lim_{s\to 0^+} \frac{C(n,s)}{s(1-s)}= \frac{2}{\omega_{n-1}}$.
\end{itemize}
where $\omega_{n-1}$ denotes the $(n-1)$-dimensional measure of the unit sphere $S^{n-1}$. \\
\end{corollary}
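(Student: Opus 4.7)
The plan is to start from Proposition \ref{asy_est}, which has already reduced the two limits to explicit expressions involving $\omega_{n-2}$ and the one-dimensional integrals
\[
I_1 \, := \, \int_0^\infty \!\frac{\rho^{n-2}}{(1+\rho^2)^{n/2}}\,d\rho, \qquad I_2 \, := \, \int_0^\infty \!\frac{\rho^{n-2}}{(1+\rho^2)^{n/2+1}}\,d\rho.
\]
All that remains is to evaluate $I_1$ and $I_2$ and to rewrite the answers in terms of $\omega_{n-1}$, so that \eqref{cost1} and \eqref{cost2} take the stated form.

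The first step is the trigonometric substitution $\rho=\tan\theta$. Since $1+\rho^2=\sec^2\theta$ and $d\rho=\sec^2\theta\,d\theta$, this turns $I_1$ into $\int_0^{\pi/2}\sin^{n-2}\theta\,d\theta$ and $I_2$ into $\int_0^{\pi/2}\sin^{n-2}\theta\cos^2\theta\,d\theta$. The second step is the standard spherical slicing identity
\[
\omega_{n-1} \, = \, \omega_{n-2}\int_0^\pi\sin^{n-2}\theta\,d\theta \, = \, 2\,\omega_{n-2}\int_0^{\pi/2}\sin^{n-2}\theta\,d\theta,
\]
which follows from parameterizing $S^{n-1}\subset \R^n$ as $(\sin\theta\,\sigma,\cos\theta)$ with $\sigma\in S^{n-2}$, and noting that the induced measure is $\sin^{n-2}\theta\,d\theta\,d\sigma$. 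This gives $\omega_{n-2}I_1=\omega_{n-1}/2$, which, substituted into formula \eqref{cost2}, yields part~(ii).

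For part~(i), I would write $\cos^2\theta=1-\sin^2\theta$ and invoke the classical Wallis reduction $\int_0^{\pi/2}\sin^n\theta\,d\theta=\tfrac{n-1}{n}\int_0^{\pi/2}\sin^{n-2}\theta\,d\theta$ (obtained by integrating $\sin^{n-1}\theta\cdot\sin\theta$ by parts once), which gives
\[
I_2 \, = \, I_1-\int_0^{\pi/2}\sin^n\theta\,d\theta \, = \, I_1-\frac{n-1}{n}I_1 \, = \, \frac{I_1}{n}.
\]
Combining with the previous step, $\tfrac{\omega_{n-2}}{2}I_2=\tfrac{\omega_{n-1}}{4n}$, and substituting into \eqref{cost1} closes the argument.

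No step is genuinely delicate: once Proposition \ref{asy_est} is in hand, the whole corollary is a one-page trigonometric computation. The only substantive input is the spherical slicing identity linking $\omega_{n-1}$ and $\omega_{n-2}$, which is the mechanism allowing the $\omega_{n-2}$ produced by Proposition~\ref{asy_est} to be replaced by $\omega_{n-1}$ in the statement of the corollary.
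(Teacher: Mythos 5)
Your proposal is correct, but it follows a genuinely different route from the paper. You evaluate $I_1 = \int_0^\infty \rho^{n-2}(1+\rho^2)^{-n/2}\,d\rho$ and $I_2 = \int_0^\infty \rho^{n-2}(1+\rho^2)^{-n/2-1}\,d\rho$ directly: the substitution $\rho=\tan\theta$ converts them into the Wallis integrals $\int_0^{\pi/2}\sin^{n-2}\theta\,d\theta$ and $\int_0^{\pi/2}\sin^{n-2}\theta\cos^2\theta\,d\theta$, the spherical-slicing identity $\omega_{n-1}=2\omega_{n-2}\int_0^{\pi/2}\sin^{n-2}\theta\,d\theta$ identifies the first one, and a single Wallis reduction gives $I_2=I_1/n$. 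By contrast, the paper works with the two-parameter family $E_n(\theta)=\int_0^\infty\rho^{n-2}(1+\rho^2)^{-\theta/2}\,d\rho$, integrates by parts to get the recursion $E_n(\theta)=\tfrac{\theta}{n-1}E_{n+2}(\theta+2)$, deduces that both $I_n^{(1)}$ and $I_n^{(0)}$ satisfy a two-step recursion in $n$, and then closes the argument by induction: it checks the base cases $n=2,3$ by hand and, for the inductive step, needs the classical recursion $\omega_n=\tfrac{2\pi}{n-1}\omega_{n-2}$, which it proves separately by slicing the unit ball. Both routes ultimately hinge on the same dimensional bookkeeping relating $\omega_{n-1}$ and $\omega_{n-2}$, but your version avoids the induction and the base-case verification entirely: once $\rho=\tan\theta$ has been substituted, the identities are immediate, and the whole computation collapses to one integration by parts (inside the Wallis reduction) instead of the paper's iterated recursion. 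Your argument is the more economical of the two; the paper's has the minor advantage of staying entirely within rational-function integrands and explicit recursions, which is perhaps a stylistic preference given the elementary flavor of the surrounding text.
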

\begin{proof}
For any $\theta\in\R$ such that $\theta> n-1$, let us define
$$E_n(\theta):=\int_{0}^{+\infty} \frac{\rho^{n-2}}{(1+\rho^2)^{\frac{\theta}{2}}} \;d\rho.$$
Observe that the assumption on the parameter $\theta$ ensures the convergence of the integral. Furthermore, integrating by parts we get
\begin{eqnarray}\label{p}
E_n(\theta)&=& \frac{1}{n-1} \int_{0}^{+\infty} \frac{(\rho^{n-1})'}{(1+\rho^2)^{\frac{\theta}{2}}} \;d\rho\nonumber\\
&=& \frac{\theta}{n-1} \int_{0}^{+\infty} \frac{\rho^{n}}{(1+\rho^2)^{\frac{\theta+2}{2}}} \;d\rho \nonumber\\
&=& \frac{\theta}{n-1} E_{n+2}(\theta+2).
\end{eqnarray}
Then, we set
\begin{equation*}
I_n^{(1)}:=E_n(n+2)=\int_{0}^{+\infty} \frac{\rho^{n-2}}{(1+\rho^2)^{\frac{n}{2}+1}} \;d\rho
\end{equation*}
and
\begin{equation*}
I_n^{(0)}:=E_n(n)=\int_{0}^{+\infty} \frac{\rho^{n-2}}{(1+\rho^2)^{\frac{n}{2}}} \;d\rho.
\end{equation*}
In view of \eqref{p}, it follows that $I_n^{(1)}$ and $I_n^{(0)}$ can be obtained in a recursive way, since
\begin{equation}\label{p1}
I_{n+2}^{(1)}=E_{n+2}(n+4)= \frac{n-1}{n+2}\, E_n(n+2)=\frac{n-1}{n+2}\, I_n^{(1)}
\end{equation}
and
\begin{equation}\label{p2}
I_{n+2}^{(0)}=E_{n+2}(n+2)= \frac{n-1}{n}\, E_n(n)=\frac{n-1}{n}\, I_n^{(0)}.
\end{equation}
\vspace{1mm}

Now we claim that 
\begin{equation}\label{p3}
I_{n}^{(1)}=\frac{\omega_{n-1}}{2n\omega_{n-2}}
\end{equation}
and
\begin{equation}\label{p4}
I_{n}^{(0)}=\frac{\omega_{n-1}}{2\omega_{n-2}}.
\end{equation}
We will prove the previous identities by induction. We start by noticing that the inductive basis are satisfied, since 
$$ I_2^{(1)}= \int_{0}^{+\infty} \frac{1}{(1+\rho^2)^{2}} \;d\rho=\frac{\pi}{4},\qquad \quad I_3^{(1)}= \int_{0}^{+\infty} \frac{\rho}{(1+\rho^2)^{\frac{5}{2}}} \;d\rho=\frac{1}{3}$$ and 
$$ I_2^{(0)}= \int_{0}^{+\infty} \frac{1}{(1+\rho^2)} \;d\rho=\frac{\pi}{2},\qquad \quad I_3^{(0)}= \int_{0}^{+\infty} \frac{\rho}{(1+\rho^2)^{\frac{3}{2}}} \;d\rho =1.$$ 
Now, using \eqref{p1} and \eqref{p2}, respectively, it is clear that in order to check the inductive steps, it suffices to verify that 
\begin{equation}\label{mis}
\frac{\omega_{n+1}}{\omega_n}=\frac{n-1}{n} \frac{\omega_{n-1}}{\omega_{n-2}}.
\end{equation}
We claim that the above formula plainly  follows  from a classical recursive formula on~$\omega_n$,
that is
\begin{equation}\label{Rec}
\omega_n = \frac{2\pi}{n-1} \omega_{n-2}.
\end{equation}
To prove this, let us denote by $\varpi_n$ the Lebesgue measure
of the $n$-dimensional unit ball and let
us fix the notation~$x=(\tilde x,x')\in\R^{n-2}\times\R^2$. By   integrating on~$\R^{n-2}$ and then using polar coordinates in~$\R^2$, we see that
\begin{eqnarray}\label{RR1}
\varpi_n &=&\int_{|x|^2\le 1}\,dx \;=\;
\int_{|x'|\le 1}\left( \int_{|\tilde x|^2 \le 1-|x'|^2}\,d\tilde x\right)dx' \nonumber \\ \nonumber \\
&=& \varpi_{n-2} \int_{|x'|\le 1} \big(1-|x'|^2\big)^{\frac{(n-2)}{2}}\,dx'  \nonumber \\ \nonumber \\
&=& 2\pi \varpi_{n-2} \int_{0}^1 \rho \,\big(1-\rho^2\big)^{\frac{(n-2)}{2}}\,d\rho \ = \
 \frac{2\pi\varpi_{n-2}}{n}.
\end{eqnarray}
Moreover, by polar coordinates in~$\R^n$,
\begin{equation}\label{RR2}
\varpi_n = \int_{|x|\le1}\,dx=\omega_{n-1}\int_0^1\rho^{n-1}\,d\rho
=\frac{\omega_{n-1}}{n}.
\end{equation}
Thus, we use \eqref{RR2}
and \eqref{RR1} and we obtain
$$ \omega_{n-1}=n\varpi_n={2\pi\varpi_{n-2}} =
\frac{2\pi\omega_{n-3}}{n-2},$$
which is \eqref{Rec}, up to replacing $n$ with~$n-1$. In turn, \eqref{Rec} implies \eqref{mis} and so \eqref{p3} and \eqref{p4}.

\vspace{1mm}

Finally, using \eqref{p3}, \eqref{p4} and Proposition \ref{asy_est} we can conclude that
$$\lim_{s\to 1^{-}} \frac{C(n,s)}{s(1-s)}= \frac{2}{\omega_{n-2} I_n^{(1)}}= \frac{4n}{\omega_{n-1}} $$
and
$$\lim_{s\to 0^{+}} \frac{C(n,s)}{s(1-s)}= \frac{1}{\omega_{n-2} I_n^{(0)}}= \frac{2}{\omega_{n-1}}, $$
as desired\footnote{Another (less elementary) way to obtain this result is to notice that $\dys E_n(\theta)=2\,\mathcal{B} \left( (n-1)/2, (\theta-n-1)/2 \right)$, where $\mathcal{B}$ is the Beta function.}.
\end{proof} 

\vspace{2mm}
\begin{rem}
It is worth noticing that when $p=2$ we recover the constants $C_1$ and $C_2$ in \eqref{disBre} and \eqref{disMaz}, respectively. In fact, in this case it is known that $$C_1=\frac{1}{2}\int_{S^{n-1}}|\xi_1|^2 d\sigma(\xi)=\frac{1}{2n}\sum_{i=1}^n \int_{S^{n-1}}|\xi_i|^2 d\sigma(\xi) =\frac{\omega_{n-1}}{2n}$$ and $C_2=\omega_{n-1}$ (see \cite{BBM01} and \cite{MS02}). Then, by Proposition~\ref{aaaeleono} and Corollary~\ref{asy_est1} it follows that
\begin{eqnarray}\
&& \lim_{s\to 1^-} (1-s) \int_{\R^n}\int_{\R^n} \frac{|u(x)-u(y)|^2}{|x-y|^{n+2s}}\,dx\,dy \nonumber\\
\nonumber\\
&&\qquad \qquad\qquad\qquad\quad = \lim_{s\to 1^-} 2(1-s) C(n,s)^{-1}\||\xi|^s\FF u \|^2_{L^2(\R^n)}\nonumber \\
\nonumber\\
&&\qquad \qquad\qquad\qquad \quad = \frac{\omega_{n-1}}{2n} \| \nabla u\|^2_{L^2(\R^n)} \nonumber\\
\nonumber \\
&& \qquad \qquad\qquad\qquad \quad = C_1 \|u\|^2_{H^1(\R^n)} \nonumber
\end{eqnarray}
and
\begin{eqnarray}\
\lim_{s\to 0^+} s \int_{\R^n}\int_{\R^n} \frac{|u(x)-u(y)|^2}{|x-y|^{n+2s}}\,dx\,dy &=& \lim_{s\to 0^+} 2s C(n,s)^{-1}\||\xi|^s\FF u \|^2_{L^2(\R^n)}\nonumber \\
\nonumber\\
&=& \omega_{n-1} \|u\|^2_{L^2(\R^n)}\nonumber \\
\nonumber\\
&=& C_2 \|u\|^2_{L^2(\R^n)}.\nonumber
\end{eqnarray}
\end{rem}
\vspace{2mm}
We will conclude this section with the following proposition that one could  plainly deduce from Proposition \ref{pro_symbol}. We prefer to provide a direct proof, based on Lemma~\ref{lem_2nd}, in order to show the consistency in the definition of the constant $C(n,s)$.

\begin{prop}
Let $n>1$. For any $u\in C_0^{\infty}(\R^n)$ the following statements hold:
\begin{itemize}
\item[$(i)$] $\lim_{s\to 0^+} (-\Delta)^s u=u$;
\item[$(ii)$] $\lim_{s\to 1^-} (-\Delta)^s u=-\Delta u$.
\end{itemize}
\end{prop}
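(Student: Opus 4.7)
My approach is to start from the symmetrized representation
$$(-\Delta)^s u(x) \;=\; -\tfrac{1}{2}C(n,s)\int_{\R^n}\frac{u(x+y)+u(x-y)-2u(x)}{|y|^{n+2s}}\,dy$$
provided by Lemma~\ref{lem_2nd}, split the domain at $|y|=1$ so that the term $2u(x)$ can be extracted on the outer region (where $|y|^{-n-2s}$ is integrable), and then feed the resulting pieces into the asymptotics of $C(n,s)$ given by Corollary~\ref{asy_est1}. Using the polar-coordinate identity $\int_{|y|\ge 1}|y|^{-n-2s}\,dy=\omega_{n-1}/(2s)$, this decomposes $(-\Delta)^s u(x)$ into three summands: an inner integral over $B_1$, an outer tail involving only $u(x\pm y)$, and the explicit boundary term $u(x)\,\omega_{n-1}C(n,s)/(2s)$.

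For claim $(i)$, Corollary~\ref{asy_est1}$(ii)$ yields $C(n,s)/(s(1-s))\to 2/\omega_{n-1}$, so the explicit term can be written as $u(x)\cdot\tfrac{\omega_{n-1}}{2}\cdot\tfrac{C(n,s)}{s(1-s)}\cdot(1-s)$ and converges to $u(x)$. A second-order Taylor expansion controls the inner integrand by $\|D^2u\|_\infty |y|^{2-n-2s}$, whose integral is of order $1/(1-s)$ and is therefore annihilated by the prefactor $C(n,s)\sim 2s/\omega_{n-1}\to 0$. The outer tail is uniformly bounded in $s$ near $0$ since $u$ has compact support (the integrand vanishes outside a bounded $y$-region and $|y|^{-n-2s}\le 1$ on $|y|\ge 1$), and is likewise killed by $C(n,s)\to 0$.

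For claim $(ii)$, Corollary~\ref{asy_est1}$(i)$ gives $C(n,s)/(s(1-s))\to 4n/\omega_{n-1}$; in particular $C(n,s)\to 0$, which immediately wipes out the outer tail and the explicit term. The whole content of the statement then lies in the inner integral, which I would expand symmetrically as $u(x+y)+u(x-y)-2u(x)=\sum_{i,j}y_iy_j\,\partial_{ij}u(x)+O(|y|^4)$; the off-diagonal terms integrate to zero by the reflection $y_i\mapsto -y_i$, and polar coordinates give $\int_{|y|<1}y_i^2|y|^{-n-2s}\,dy=\omega_{n-1}/\bigl(2n(1-s)\bigr)$, producing a leading contribution $-\tfrac{1}{2}\cdot\tfrac{\omega_{n-1}}{2n}\cdot\tfrac{C(n,s)}{1-s}\,\Delta u(x)$, which by Corollary~\ref{asy_est1}$(i)$ tends to $-\Delta u(x)$. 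The $O(|y|^4)$ remainder integrates to a quantity of order $1/(4-2s)$ and is absorbed by $C(n,s)\to 0$. The only real subtlety — and indeed the whole point of Section~\ref{sec_cost} — is that the spherical integrations generate the apparently singular factors $1/s$ and $1/(1-s)$ whose cancellation is exactly the job of $C(n,s)$; thus Corollary~\ref{asy_est1} is the essential input rather than an afterthought.
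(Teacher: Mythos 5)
Your proposal is correct and follows essentially the same route as the paper: start from the symmetric representation of Lemma~\ref{lem_2nd}, split the integral, isolate the $2u(x)$ contribution on the outer region, control the inner piece by Taylor expansion, use the odd-in-$y_j$ symmetry to kill off-diagonal second-order terms, and let Corollary~\ref{asy_est1} supply the precise cancellation of the $1/s$ and $1/(1-s)$ singularities. The two small differences are implementation choices rather than new ideas. First, for $(i)$ you split at $|y|=1$ and bound the outer tail $\int_{|y|\ge1}(u(x+y)+u(x-y))|y|^{-n-2s}\,dy$ by a constant uniform in $s$ (using compact support and $|y|^{-n-2s}\le1$); the paper instead splits at $|y|=R$ with $R$ chosen so large that $u(x\pm y)\equiv 0$ there, making that tail exactly zero — the paper's choice is slightly cleaner, yours is more self-contained. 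Second, for $(ii)$ you control the inner remainder after subtracting $D^2u(x)y\cdot y$ by $O(|y|^4)$, using $C^4$ regularity; the paper bounds it by $\|u\|_{C^3}|y|^3$, giving a factor $1/(3-2s)$ rather than your $1/(4-2s)$, which is economical in one extra degree of smoothness — immaterial here since $u\in C^\infty_0$. Your closing observation — that the entire point of Section~\ref{sec_cost} is that the spherical integrations produce singular factors whose cancellation is effected by $C(n,s)$ — is exactly the paper's intent, so Corollary~\ref{asy_est1} is indeed the essential input.
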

\begin{proof}
Fix $x\in \R^n$, $R_0>0$ such that $\textrm{supp}\, u \subseteq B_{R_0}$ and set $R= R_0 + |x|+1$. First,\begin{eqnarray}\label{el0}
\left|   \int_{B_R} \frac{u(x+y)+u(x-y)-2u(x)}{|y|^{n+2s}}\,dy  \right| 
& \leq &  \|u\|_{C^2(\R^n)}\int_{ B_R} \frac{|y|^2}{|y|^{n+2s}}\,dy\nonumber \\
 \nonumber \\
&\leq & \omega_{n-1} \|u\|_{C^2(\R^n)} \int_{0}^R \frac{1}{\rho^{2s-1}}\,d\rho \nonumber \\
 \nonumber \\
&= & \frac{\omega_{n-1} \|u\|_{C^2(\R^n)}R^{2-2s}}{2(1-s)}.
\end{eqnarray}
\vspace{2mm}

Furthermore, observe that  $|y|\geq R$ yields $|x\pm y|\geq |y|-|x|\geq R-|x|>R_0$ and consequently $u(x\pm y)=0$. Therefore,
\begin{eqnarray}\label{el1}
-\frac{1}{2} \int_{\R^n \setminus B_R}  \frac{u(x+y)+u(x-y)-2u(x)}{|y|^{n+2s}}\;dy & = & u(x) \int_{\R^n \setminus B_R} \frac{1}{|y|^{n+2s}}\,dy \nonumber\\
& =& \omega_{n-1} u(x) \int_{R}^{+\infty} \frac{1}{\rho^{2s+1}} \,d\rho \nonumber \\
& = & \frac{\omega_{n-1} R^{-2s}}{2s} u(x).
\end{eqnarray}
Now, by \eqref{el0} and Corollary \ref{asy_est1}, we have
$$\lim_{s\to 0^+} -\frac{C(n,s)}{2}\int_{B_R} \frac{u(x+y)+u(x-y)-2u(x)}{|y|^{n+2s}}\;dy =0 $$ and so we get, recalling Lemma \ref{lem_2nd},
\begin{eqnarray*}
\lim_{s\to 0^+} (-\Delta)^s u &=& \lim_{s\to 0^+} -\frac{C(n,s)}{2}\int_{\R^n \setminus B_R} \frac{u(x+y)+u(x-y)-2u(x)}{|y|^{n+2s}}\;dy \\
&=& \lim_{s\to 0^+} \frac{C(n,s)\omega_{n-1} R^{-2s}}{2s} u(x)\;= \;u(x),
\end{eqnarray*}
where the last identities follow from \eqref{el1} and again Corollary \ref{asy_est1}. This proves~$(i)$.

\vspace{2mm}

Similarly, we can prove $(ii)$. In this case, when $s$ goes to 1, we have no contribution outside the unit ball, as the following estimate shows
\begin{eqnarray}\label{el3}
&& \left| \int_{\R^n\setminus B_1} \frac{u(x+y)+u(x-y)-2u(x)}{|y|^{n+2s}}\;dy  \right| \nonumber \\
&&\qquad\qquad\qquad\qquad\qquad\qquad\leq  4 \|u\|_{L^{\infty}(\R^n)}\int_{ \R^n\setminus B_1} \frac{1}{|y|^{n+2s}}\;dy\nonumber \\
&&\qquad\qquad\qquad\qquad\qquad\qquad\leq  4 \omega_{n-1} \|u\|_{L^{\infty}(\R^n)} \int_{1}^{+\infty} \frac{1}{\rho^{2s+1}}\,d\rho \nonumber \\
&&\qquad\qquad\qquad\qquad\qquad\qquad= \frac{2\omega_{n-1}}{s} \|u\|_{L^{\infty}(\R^n)}.\nonumber
\end{eqnarray}
As a consequence (recalling Corollary \ref{asy_est1}), we get
\begin{equation}\label{el3a}
\lim_{s\to 1^-} -\frac{C(n,s)}{2}\int_{\R^n\setminus B_1} \frac{u(x+y)+u(x-y)-2u(x)}{|y|^{n+2s}}\;dy =0.
\end{equation}
\vspace{2mm}

On the other hand, we have
\begin{eqnarray*}
&& \left|   \int_{B_1} \frac{u(x+y)+u(x-y)-2u(x)- D^2u(x)y\cdot y}{|y|^{n+2s}}\;dy  \right| \nonumber \\
&&\qquad \qquad \qquad \qquad \qquad \qquad \qquad \qquad \ \leq  \|u\|_{C^3(\R^n)}\int_{ B_1} \frac{|y|^3}{|y|^{n+2s}}\;dy\nonumber \\
&&\qquad \qquad \qquad \qquad \qquad \qquad \qquad \qquad \ \leq  \omega_{n-1} \|u\|_{C^3(\R^n)} \int_{0}^1 \frac{1}{\rho^{2s-2}}\, d\rho \nonumber \\
\\ \nonumber
&&\qquad \qquad \qquad \qquad \qquad \qquad \qquad \qquad \ = \frac{\omega_{n-1} \|u\|_{C^3(\R^n)}}{3-2s}\nonumber
\end{eqnarray*}
and this implies that
\begin{eqnarray}\label{el4a}
&& \lim_{s\to 1^-} -\frac{C(n,s)}{2}\int_{ B_1} \frac{u(x+y)+u(x-y)-2u(x)}{|y|^{n+2s}}\;dy\; \nonumber \\
&& \qquad \qquad \qquad \qquad \qquad  = \lim_{s\to 1^-} -\frac{C(n,s)}{2}\int_{ B_1} \frac{D^2u(x)y\cdot y}{|y|^{n+2s}}\;dy.
\end{eqnarray}
\vspace{1mm}

Now, notice that if $i\neq j$ then
$$ \int_{ B_1} \partial^2_{ij}u(x) y_i\cdot y_j\;dy \;=\; -\int_{ B_1} \partial^2_{ij} u(x)\tilde{y}_i \cdot \tilde{y}_j \;d\tilde{y},
$$
where $\tilde{y}_k=y_k$ for any $k\neq j$ and $\tilde{y}_j=-y_j$, and thus
\begin{equation}\label{el5}
\int_{ B_1} \partial^2_{ij}u(x)y_i\cdot y_j  \;dy=0.
\end{equation}
Also, up to permutations, for any fixed $i$, we get
\begin{eqnarray}\label{el6}
\int_{ B_1} \frac{  \partial^2_{ii}u(x)y_i^2}{|y|^{n+2s}}\;dy &=& \partial^2_{ii}u(x) \int_{ B_1} \frac{y_i^2}{|y|^{n+2s}}\;dy \ = \ \partial^2_{ii}u(x) \int_{ B_1} \frac{y_1^2}{|y|^{n+2s}}\;dy \nonumber\\
&=& \frac{\partial^2_{ii}u(x)}{n}\sum_{j=1}^n \int_{ B_1} \frac{y_j^2}{|y|^{n+2s}}\;dy \ = \ \frac{ \partial^2_{ii}u(x)}{n} \int_{ B_1} \frac{|y|^2}{|y|^{n+2s}}\;dy \nonumber\\
&=& \frac{ \partial^2_{ii}u(x)\; \omega_{n-1}}{2n(1-s)}. 
\end{eqnarray}

Finally, combining \eqref{el3a}, \eqref{el4a}, \eqref{el5}, \eqref{el6}, Lemma~\ref{lem_2nd} and Corollary~\ref{asy_est1}, we can conclude
\begin{eqnarray*}
\lim_{s\to 1^-} (-\Delta)^s u &=&\lim_{s\to 1^-} -\frac{C(n,s)}{2}\int_{B_1} \frac{u(x+y)+u(x-y)-2u(x)}{|y|^{n+2s}}\;dy \\
&=&\lim_{s\to 1^-} -\frac{C(n,s)}{2}\int_{B_1} \frac{D^2u(x) y\cdot y}{|y|^{n+2s}}\;dy \\
&=&\lim_{s\to 1^-} -\frac{C(n,s)}{2}\sum_{i=1}^n \int_{B_1} \frac{\partial^2_{ii}u(x) y_i^2}{|y|^{n+2s}}\;dy \\
&=& \lim_{s\to 1^-} -\frac{C(n,s)  \omega_{n-1}}{4n(1-s)}\;\sum_{i=1}^n  \partial^2_{ii}u(x)\;=\; -\Delta u(x).\qedhere
\end{eqnarray*}

\end{proof}

\vspace{3mm}

\section{Extending a $W^{s,p}(\Omega)$ function to the whole of $\R^n$}\label{sec_estensione}
As well known when $s$ is an integer, under certain regularity assumptions on the domain $\Omega$, any function in $W^{s,p}(\Omega)$ may be extended to a function in $W^{s,p}(\R^n)$. Extension results are quite important in applications and are necessary in order to improve some embeddings theorems, in the classic case as well as in the fractional case (see Section~\ref{sec_sobolev} and Section~\ref{sec_compactness} in the following). 

\vspace{2mm}

For any $s\in(0,1)$ and any $p\in[1,\infty)$, we say that an open set
$\Omega\subseteq\R^n$ is an {\it extension domain for~$W^{s,p}$} if
there exists a positive constant $C=C(n,p,s,\Omega)$ such that:
for every function $u\in W^{s,p}(\Omega)$
there exists $\tilde{u}\in W^{s,p}(\R^n)$ with
$\tilde u(x)=u(x)$ for all $x\in\Omega$ and
$\dys\|\tilde{u}\|_{W^{s,p}(\R^n)}\leq C\|u\|_{W^{s,p}(\Omega)}$.
\vspace{2mm}

In general, an arbitrary open set is not an extension domain for $W^{s,p}$.
To~the~authors' knowledge, the problem of characterizing  the class of sets that are extension domains for $W^{s,p}$ is open\footnote{
While revising this paper, we were informed that an answer to this question has been given by Zhou, by analyzing the link between extension domains in $W^{s,p}$ and the measure density condition (see~\cite{Zho11}).
}. When $s$ is an integer, we cite~\cite{Jon82} for a complete characterization in the special case $s=1$, $p=2$ and $n=2$, and we refer the interested reader to the recent book by Leoni~\cite{Leo09}, in which this problem is very well discussed (see, in particular, Chapter~11 and Chapter~12 there).

\vspace{2mm}

In this section, we will show that any open set~$\Omega$ of class~$C^{0,1}$ with bounded boundary is an extension domain for~$W^{s,p}$.

\vspace{2mm}

We start with some preliminary lemmas, in which we will construct the extension to the whole of $\R^n$ of a function $u$ defined 
 on $\Omega$ in two separated cases: when the function $u$ is identically zero in a neighborhood of the boundary $\partial \Omega$ and when $\Omega$ coincides with the half-space $\R^n_+$.
\vspace{1mm}

\begin{lemma}\label{giampi0}
Let $\Omega$ be an open set in $\R^n$ and $u$ a function in $W^{s,p}(\Omega)$ with $s\in(0,1)$ and $ p \in [1,+\infty)$. If there exists a compact subset $K\subset\Omega$ such that $u\equiv 0$ in $\Omega\setminus K$, then the extension function $\tilde{u}$ defined as
\begin{equation}\label{giampieq0}
\tilde{u}(x)=
\begin{cases}
  u(x) \quad x\in \Omega \,,\\
\,0 \qquad x\in \R^n \setminus \Omega\,
\end{cases}
\end{equation}
belongs to $W^{s,p}(\R^n)$ and 
$$
\|\tilde{u}\|_{W^{s,p}(\R^n)} \leq C \|u\|_{W^{s,p}(\Omega)},
$$
where $C$ is a suitable positive constant depending on $n$, $p$, $s$, $K$ and $\Omega$.
\end{lemma}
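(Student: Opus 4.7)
The plan is to verify both pieces of the $W^{s,p}(\R^n)$-norm of $\tilde u$ separately. The $L^p$ part is immediate, since $\tilde u$ vanishes outside $\Omega$ gives $\|\tilde u\|_{L^p(\R^n)}=\|u\|_{L^p(\Omega)}$. Everything therefore reduces to estimating the Gagliardo double integral of $\tilde u$ over $\R^n\times\R^n$.

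First I would split $\R^n\times\R^n$ into the four pieces $\Omega\times\Omega$, $(\R^n\setminus\Omega)\times(\R^n\setminus\Omega)$, and the two mixed pieces. The first piece is exactly $[u]^p_{W^{s,p}(\Omega)}$; the second piece vanishes because $\tilde u\equiv 0$ on $\R^n\setminus\Omega$; the two mixed pieces are equal by symmetry (swap $x$ and $y$), so it suffices to bound
\[
I:=\int_{\Omega}\int_{\R^n\setminus\Omega}\frac{|\tilde u(x)-\tilde u(y)|^p}{|x-y|^{n+sp}}\,dy\,dx
=\int_{\Omega}\int_{\R^n\setminus\Omega}\frac{|u(x)|^p}{|x-y|^{n+sp}}\,dy\,dx.
\]
The crucial observation is that $u\equiv 0$ on $\Omega\setminus K$, so the integrand is supported in $x\in K$. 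Since $K$ is compact and $K\subset\Omega$, the number $d:=\mathrm{dist}(K,\R^n\setminus\Omega)$ is strictly positive, hence for $x\in K$ and $y\in\R^n\setminus\Omega$ we have $|x-y|\ge d$.

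With this at hand, a polar-coordinate computation gives
\[
\int_{\R^n\setminus\Omega}\frac{dy}{|x-y|^{n+sp}}\le\int_{\{|z|\ge d\}}\frac{dz}{|z|^{n+sp}}=\frac{\omega_{n-1}}{sp\,d^{sp}}\quad\text{for every } x\in K,
\]
so that $I\le \frac{\omega_{n-1}}{sp\,d^{sp}}\|u\|^p_{L^p(K)}\le C(n,p,s,K,\Omega)\|u\|^p_{L^p(\Omega)}$. Summing the three nonzero contributions yields
\[
[\tilde u]^p_{W^{s,p}(\R^n)}\le [u]^p_{W^{s,p}(\Omega)}+2C(n,p,s,K,\Omega)\|u\|^p_{L^p(\Omega)},
\]
which, combined with the $L^p$ identity, gives the claimed estimate after taking $p$-th roots and relabeling the constant.

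There is no real obstacle here: the argument is entirely elementary once one notices that the compact support hypothesis forces the ``crossing'' interactions to occur only between points at bounded-below distance $d>0$, which makes the singular kernel harmless and lets it be absorbed in the $L^p$ norm. The proof's only sensitive point is verifying that $d>0$, which is standard compactness (a compact set and a closed set disjoint from it have positive distance).
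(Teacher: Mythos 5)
Your proof is correct and follows essentially the same approach as the paper: decompose the Gagliardo integral by symmetry into the $\Omega\times\Omega$ piece and twice the mixed piece, observe that the mixed piece is supported in $x\in K$, and exploit the positive distance between $K$ and $\R^n\setminus\Omega$ to bound the singular kernel and absorb it into $\|u\|_{L^p(\Omega)}^p$. The only cosmetic difference is that you integrate the kernel in $y$ for each fixed $x\in K$ (getting a clean, explicit constant), whereas the paper first takes a supremum over $x\in K$ and then integrates the resulting $\mathrm{dist}(y,\partial K)^{-(n+sp)}$ in $y$; both rest on the same compactness observation.
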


\begin{proof}
Clearly $\tilde{u}\in L^p(\R^n)$. Hence, it remains to verify that the Gagliardo norm of $\tilde{u}$ in $\R^n$ is bounded by the one of $u$ in $\Omega$. Using the symmetry of the integral in the Gagliardo norm with respect to $x$ and $y$ and the fact that $\tilde{u}\equiv0$ in $\R^n\setminus \Omega$, we can split as follows
\begin{eqnarray}\label{est0}
\int_{\R^n}\int_{\R^n} \frac{|\tilde{u}(x)-\tilde{u}(y)|^p}{|x-y|^{n+sp}} \,dx\,dy &\! = &\! \int_{\Omega}\int_{\Omega} \frac{|u(x)-u(y)|^p}{|x-y|^{n+sp}} \,dx\,dy  \\
&&\! + 2 \int_{\Omega} \left( \int_{\R^n \setminus \Omega} \frac{|u(x)|^p}{|x-y|^{n+sp}} \,dy \right)\,dx,  \nonumber
\end{eqnarray}
where the first term in the right hand-side of~\eqref{est0} is finite since $u\in W^{s,p}(\Omega)$. Furthermore, for any $y\in\R^n\setminus K$, 
$$ \frac{|u(x)|^p}{|x-y|^{n+sp}}= \frac{\chi_{K} (x) |u(x)|^p}{|x-y|^{n+sp}}\,\leq\, \chi_{K} (x) |u(x)|^p\, \sup_{x\in K} \frac{1}{|x-y|^{n+sp}} $$
and so
\begin{equation}\label{est1}
\int_{\Omega} \left( \int_{\R^n\setminus \Omega} \frac{|u(x)|^p}{|x-y|^{n+sp}} \,dy\right)\,dx \leq \int_{\R^n\setminus\Omega}\, \frac{1}{{\rm dist(}y,\partial{K})^{n+sp}}\,dy \,\;\|u\|^p_{	L^{p}(\Omega)}\,.
\end{equation}
 Note that the integral in \eqref{est1} is finite since ${\rm dist(}\partial{\Omega},\partial{K})\geq \alpha>0$ and $n+sp>n$. Combining \eqref{est0} with \eqref{est1}, we get
$$\|\tilde{u}\|_{W^{s,p}(\R^n)} \leq C \|u\|_{W^{s,p}(\Omega)}    $$
where $C=C(n,s,p,K)$.
\end{proof}

\vspace{1mm}

\begin{lemma}\label{giampi1}
Let $\Omega$ be an open set in $\R^n$, symmetric with respect to the coordinate $x_n$, and consider the sets $\Omega_+=\{x\in \Omega \,:\, x_n > 0\}$ and $\Omega_-=\{x\in \Omega \,:\, x_n\leq 0\}$. Let $u$ be a function in $W^{s,p}(\Omega_+)$, with $s\in(0,1)$ and $ p \in [1, +\infty)$. Define
\begin{equation}\label{giampieq1}
\bar{u}(x)=
\begin{cases}
u(x',x_n)\quad\quad x_n\geq 0\,, \\
u(x',-x_n) \;\quad x_n<0 \,.
\end{cases}
\end{equation}
Then $\bar{u}$ belongs to $W^{s,p}(\Omega)$ and
$$
\|\bar{u}\|_{W^{s,p}(\Omega)} \leq 4 \|u\|_{W^{s,p}(\Omega_+)}\,.
$$
\end{lemma}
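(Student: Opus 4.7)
The plan is to reduce everything to integrals over $\Omega_+ \times \Omega_+$ by exploiting the even reflection structure. First I would handle the $L^p$ part trivially: since $\bar u(x',x_n) = u(x',|x_n|)$, the change of variable $x_n \mapsto -x_n$ on $\Omega_-$ (permissible by the symmetry of $\Omega$) gives $\|\bar u\|_{L^p(\Omega)}^p = 2\|u\|_{L^p(\Omega_+)}^p$.

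For the Gagliardo seminorm, I would split the double integral over $\Omega \times \Omega$ into the four pieces $\Omega_\pm \times \Omega_\pm$. The piece $\Omega_+ \times \Omega_+$ gives exactly $[u]_{W^{s,p}(\Omega_+)}^p$; the piece $\Omega_- \times \Omega_-$ also equals $[u]_{W^{s,p}(\Omega_+)}^p$ after the change of variables $(x_n,y_n)\mapsto(-x_n,-y_n)$, because the quantities $|\bar u(x)-\bar u(y)|$ and $|x-y|$ are both preserved under the simultaneous reflection of $x_n$ and $y_n$. So these two pieces together contribute $2[u]_{W^{s,p}(\Omega_+)}^p$.

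The key observation, and the only nontrivial point, is the estimate on the mixed piece $\Omega_+ \times \Omega_-$. Given $x \in \Omega_+$ and $y \in \Omega_-$, set $\tilde y = (y',-y_n)\in \Omega_+$; then $\bar u(y)=u(\tilde y)$, while
\[
|x-y|^2 = |x'-y'|^2 + (x_n-y_n)^2 \geq |x'-y'|^2 + (x_n+y_n)^2 = |x-\tilde y|^2,
\]
since $x_n \geq 0$ and $y_n \leq 0$ force $-2x_n y_n \geq 2 x_n y_n$. Thus $|x-y|^{-(n+sp)} \leq |x-\tilde y|^{-(n+sp)}$, and applying the change of variable $y \mapsto \tilde y$ (with unit Jacobian, mapping $\Omega_-$ to $\Omega_+$) yields
\[
\int_{\Omega_+}\!\int_{\Omega_-} \frac{|\bar u(x)-\bar u(y)|^p}{|x-y|^{n+sp}}\,dy\,dx \;\leq\; \int_{\Omega_+}\!\int_{\Omega_+} \frac{|u(x)-u(\tilde y)|^p}{|x-\tilde y|^{n+sp}}\,d\tilde y\,dx \;=\; [u]_{W^{s,p}(\Omega_+)}^p,
\]
and symmetrically for $\Omega_- \times \Omega_+$.

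Summing the four contributions gives $[\bar u]_{W^{s,p}(\Omega)}^p \leq 4\,[u]_{W^{s,p}(\Omega_+)}^p$, and together with the $L^p$ bound one obtains $\|\bar u\|_{W^{s,p}(\Omega)}^p \leq 4\,\|u\|_{W^{s,p}(\Omega_+)}^p$; taking $p$-th roots and using $4^{1/p}\leq 4$ for $p\geq 1$ finishes the proof. I expect no serious obstacle beyond correctly bookkeeping the four pieces; the only substantive step is the geometric inequality $|x-y| \geq |x-\tilde y|$, which encodes the fact that reflecting one of the two points across the hyperplane they straddle brings them closer together.
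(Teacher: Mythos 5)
Your proof is correct and follows essentially the same route as the paper's: split the Gagliardo double integral over the four pieces $\Omega_{\pm}\times\Omega_{\pm}$, identify the two diagonal pieces with $[u]_{W^{s,p}(\Omega_+)}^p$ by reflection, and bound each mixed piece by the same quantity via the geometric inequality $(x_n-y_n)^2\geq(x_n+y_n)^2$ for $x_n\geq 0\geq y_n$, which is exactly the observation the paper uses. Your final bookkeeping (taking $p$-th roots and using $4^{1/p}\leq 4$) is slightly more explicit than the paper's, but the argument is the same.
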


\begin{proof}
By splitting the integrals and changing variable $\hat{x}= (x',-x_n)$, we get
\begin{equation}\label{giampieq2}
\|\bar{u}\|^p_{L^p(\Omega)}\,=\, \int_{\Omega_+} |u(x)|^p \,dx + \int_{\Omega_+} |u(\hat{x}',\hat{x_n} )|^p \,d\tilde{x}\,=\, 2 \|u\|^p_{L^p(\Omega_+)}.
\end{equation}
Also, if $x\in \R^n_+$ and $y\in \CC\R^n_+$ then $(x_n-y_n)^2\geq (x_n+y_n)^2 $ and therefore
\begin{eqnarray*}
\int_{\Omega}\int_{\Omega} \frac{|\bar{u}(x)-\bar{u}(y)|^p}{|x-y|^{n+sp}} \,dx\,dy \! &=&\! \int_{\Omega_+}\int_{\Omega_+} \frac{|u(x)-u(y)|^p}{|x-y|^{n+sp}} \,dx\,dy \\
&& \!+ 2\int_{\Omega_+}\int_{\CC\Omega_+} \frac{|u(x)-u(y',-y_n)|^p}{|x-y|^{n+sp}} \,dx\,dy \\
&& \! + \int_{\CC\Omega_+}\int_{\CC\Omega_+} \frac{|u(x',-x_n)-u(y',-y_n)|^p}{|x-y|^{n+sp}} \,dx\,dy\\
\\
& \leq & \! 4\|u\|^p_{W^{s,p}(\Omega_+)}.
\end{eqnarray*}
This concludes the proof.
\end{proof}
\vspace{1mm}

Now, a truncation lemma near $\partial{\Omega}$.
\begin{lemma}\label{giampi3}
Let $\Omega$ be an open set in $\R^n$, $s\in(0,1)$ and $p \in [1, +\infty)$. Let us consider $u\in W^{s,p}(\Omega)$ and $\psi\in C^{0,1}(\Omega)$, $0\leq \psi\leq 1$. Then $\psi u\in W^{s,p}(\Omega)$ and
\begin{equation}\label{enrico56}
\|\psi\, u\|_{W^{s,p}(\Omega)} \leq C\|u\|_{W^{s,p}(\Omega)},
\end{equation}
where $C=C(n,p,s,\Omega)$.
\end{lemma}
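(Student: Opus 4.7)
The plan is to estimate the $L^p$ and Gagliardo pieces of $\|\psi u\|_{W^{s,p}(\Omega)}$ separately. The $L^p$ part is immediate since $0\le\psi\le 1$ gives $\|\psi u\|_{L^p(\Omega)}\le\|u\|_{L^p(\Omega)}$, so everything comes down to controlling the double integral. The natural move is the elementary product decomposition
$$
\psi(x)u(x)-\psi(y)u(y) \;=\; \psi(x)\bigl(u(x)-u(y)\bigr)\,+\,u(y)\bigl(\psi(x)-\psi(y)\bigr),
$$
followed by the inequality $|a+b|^p\le 2^{p-1}(|a|^p+|b|^p)$. The contribution of $\psi(x)(u(x)-u(y))$ is harmless: since $|\psi(x)|\le 1$, it is bounded by $[u]_{W^{s,p}(\Omega)}^p$.

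The only real work is in the remainder
$$
I \;:=\;\int_\Omega\int_\Omega \frac{|u(y)|^p\,|\psi(x)-\psi(y)|^p}{|x-y|^{n+sp}}\,dx\,dy.
$$
Here the Lipschitz bound $|\psi(x)-\psi(y)|\le L|x-y|$ cannot be used globally, because the resulting kernel $|x-y|^{p(1-s)-n}$ is integrable near the diagonal but not at infinity. The remedy is to split according to whether $|x-y|<1$ or $|x-y|\ge 1$. On the near region I use the Lipschitz estimate to get
$$
\int_{\{|x-y|<1\}}\!\!\frac{|\psi(x)-\psi(y)|^p}{|x-y|^{n+sp}}\,dx
\;\le\; L^p\!\int_{B_1}\!\frac{dz}{|z|^{n+sp-p}} \;=\; \frac{L^p\,\omega_{n-1}}{p(1-s)},
$$
which is finite and uniform in $y$. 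On the far region I simply use $|\psi(x)-\psi(y)|\le 2$ together with the integrability of $|z|^{-n-sp}$ outside the unit ball (just as in \eqref{enrico1}), obtaining another uniform constant. Integrating in $y$ then gives $I \le C(n,s,p,L)\,\|u\|_{L^p(\Omega)}^p$.

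Combining the two contributions yields \eqref{enrico56} with a constant depending on $n$, $p$, $s$, and on the Lipschitz/sup-norm of $\psi$ (hence on $\Omega$). The mildly subtle point — and the only place where one might stumble — is recognizing that one cannot apply the Lipschitz estimate uniformly on all of $\Omega\times\Omega$ but must truncate at $|x-y|=1$; once that splitting is in place, the two resulting integrals are estimated by exactly the same elementary kernel computations already used in Proposition~\ref{enrico} and Proposition~\ref{enrico23}.
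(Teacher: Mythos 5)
Your proof is correct and follows essentially the same route as the paper: the same product decomposition (add and subtract $\psi(x)u(y)$), the same $2^{p-1}$ convexity bound, and the same splitting of the remainder at $|x-y|=1$ with the Lipschitz estimate near the diagonal and the trivial bound far away. The only cosmetic difference is writing the remainder with $|u(y)|^p$ rather than $|u(x)|^p$, which is equivalent by symmetry of the double integral.
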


\begin{proof}
It is clear that $\|\psi\,u\|_{L^p(\Omega)} \leq \|u\|_{L^p(\Omega)}$ since $|\psi|\leq 1$. Furthermore, adding and subtracting the factor $\psi(x)u(y)$, we get
\begin{eqnarray}\label{giampieq3}
&& \int_{\Omega}\int_{\Omega} \frac{|\psi(x)\,u(x)-\psi(y)\,u(y)|^p}{|x-y|^{n+sp}} \,dx\,dy \nonumber \\
&&\qquad \qquad \qquad \quad  \leq \, 2^{p-1}\bigg( \int_{\Omega}\int_{\Omega} \frac{|\psi(x)\,u(x)-\psi(x)\,u(y)|^p}{|x-y|^{n+sp}} \,dx\,dy \nonumber \\
&&\qquad \qquad \qquad  \quad \quad + \int_{\Omega}\int_{\Omega} \frac{|\psi(x)\,u(y)-\psi(y)\,u(y)|^p}{|x-y|^{n+sp}} \,dx\,dy\bigg) \nonumber \\
\nonumber \\
&&\qquad \qquad \qquad\quad  \leq \,  2^{p-1}\bigg( \int_{\Omega}\int_{\Omega} \frac{|u(x)-u(y)|^p}{|x-y|^{n+sp}} \,dx\,dy \\
&&\qquad \qquad \qquad  \quad \quad + \int_{\Omega}\int_{\Omega} \frac{|u(x)|^p\,|\psi(x)-\psi(y)|^p}{|x-y|^{n+sp}} \,dx\,dy\bigg).\nonumber
\end{eqnarray}
Since $\psi$ belongs to $C^{0,1}(\Omega)$, we have
\begin{eqnarray}\label{giampieq4}
\int_{\Omega}\int_{\Omega} \frac{|u(x)|^p\,|\psi(x)-\psi(y)|^p}{|x-y|^{n+sp}} \,dx\,dy \! &\leq & \! \Lambda^p\! \int_{\Omega}\int_{\Omega\cap |x-y|\leq 1} \frac{|u(x)|^p\,|x-y|^p}{|x-y|^{n+sp}} \,dx\,dy \nonumber \\ 
&&+ \int_{\Omega}\int_{\Omega \cap |x-y|\geq 1} \frac{|u(x)|^p}{|x-y|^{n+sp}} \,dx\,dy \nonumber \\ 
\nonumber \\
&\leq & \! \tilde{C} \|u\|^p_{L^p(\Omega)},
\end{eqnarray}
where $\Lambda$ denotes the Lipschitz constant of $\psi$ and $\tilde{C}$ is a positive constant depending on $n$, $p$ and $s$. Note that the last inequality follows from the fact that the kernel $|x-y|^{-n+(1-s)p}$ is summable with respect to $y$ if $|x-y|\leq 1$ since $n+(s-1)p<n$ and, on the other hand, the kernel $|x-y|^{-n-sp}$ is summable when $|x-y|\geq 1$ since $n+sp>n$. Finally, combining \eqref{giampieq3} with \eqref{giampieq4}, we obtain estimate \eqref{enrico56}.
\end{proof}
\vspace{1mm}

Now, we are ready to prove the main theorem of this section, that states that every open Lipschitz set~$\Omega$ with bounded boundary is an extension domain for~$W^{s,p}$.
\begin{thm}\label{thm_estensione}
Let $p\in [1,+\infty)$, $s\in(0,1)$ and $\Omega \subseteq \R^n$ be an open set of class $C^{0,1}$ with bounded boundary\footnote{ Motivated by an interesting remark of the anonymous Referee, we point out that it should be expected that the Lipschitz assumption on the boundary of~$\Omega$ may be weakened when~$s\in(0,1)$, since in the case~$s=0$ clearly no regularity at all is needed for the extension problem.}. Then $W^{s,p}(\Omega)$ is continuously embedded in $W^{s,p}(\R^n)$, namely for any $u\in W^{s,p}(\Omega)$ there exists $\tilde{u}\in W^{s,p}(\R^n)$ such that $\tilde{u}|_{\Omega}=u$ and 
$$
\|\tilde{u}\|_{W^{s,p}(\R^n)} \leq C \|u\|_{W^{s,p}(\Omega)}
$$
where $C=C(n,p,s,\Omega)$.
\end{thm}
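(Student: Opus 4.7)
The plan is to localize via a partition of unity, flatten the boundary by a bi-Lipschitz chart, reduce to the half-space case handled by the reflection Lemma \ref{giampi1}, and treat the interior piece by the zero-extension Lemma \ref{giampi0}. Since $\partial\Omega$ is compact and $\Omega$ is of class $C^{0,1}$, cover $\partial\Omega$ by finitely many balls $B_1,\dots,B_N$ carrying bi-Lipschitz maps $T_j:Q\to B_j$ with $T_j(Q_+)=B_j\cap\Omega$ and $T_j(Q_0)=B_j\cap\partial\Omega$, and pick an open set $U_0\Subset\Omega$ with $\overline{U_0}\cap\partial\Omega=\emptyset$ such that $\Omega\subseteq U_0\cup\bigcup_{j=1}^N B_j$. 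Choose a Lipschitz partition of unity $\{\psi_0,\psi_1,\dots,\psi_N\}$ subordinate to this cover, and decompose $u=\sum_{j=0}^N \psi_j u$.

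For the interior piece $\psi_0 u$, Lemma \ref{giampi3} gives $\|\psi_0 u\|_{W^{s,p}(\Omega)}\le C\|u\|_{W^{s,p}(\Omega)}$, and since $\psi_0 u$ is supported in the compact set $\overline{U_0}\subset\Omega$, Lemma \ref{giampi0} delivers its zero-extension with the desired bound. For each $j\ge 1$, pull back by setting $v_j:=(\psi_j u)\circ T_j$ on $Q_+$, apply Lemma \ref{giampi1} to reflect across $\{x_n=0\}$ obtaining $\bar v_j\in W^{s,p}(Q)$, push forward by $T_j^{-1}$ to get $w_j:=\bar v_j\circ T_j^{-1}$ on $B_j$, multiply by a Lipschitz cutoff that equals $1$ on the support of $\psi_j$ and vanishes near $\partial B_j$ (Lemma \ref{giampi3}), and extend by zero via Lemma \ref{giampi0} to produce $\widetilde{\psi_j u}\in W^{s,p}(\R^n)$. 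Setting $\tilde u:=\sum_{j=0}^N\widetilde{\psi_j u}$ yields an extension equal to $u$ on $\Omega$ (each reflected--pushed-forward piece restores $\psi_j u$ on $B_j\cap\Omega$), and the triangle inequality combined with each per-piece bound gives $\|\tilde u\|_{W^{s,p}(\R^n)}\le C\|u\|_{W^{s,p}(\Omega)}$.

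The main technical obstacle is making rigorous the assertion that the bi-Lipschitz change of variables $T_j$ transports $W^{s,p}$ with controlled distortion. Both $T_j$ and $T_j^{-1}$ are Lipschitz with constants at most $M$, so one has $M^{-1}|\xi-\eta|\le|T_j(\xi)-T_j(\eta)|\le M|\xi-\eta|$, and by Rademacher's theorem the Jacobians $|\det DT_j|$ exist a.e.\ and are bounded above and below by positive constants depending only on $n$ and $M$. Performing the substitution $x=T_j(\xi)$, $y=T_j(\eta)$ in the Gagliardo double integral replaces the kernel $|x-y|^{-n-sp}$ by a kernel comparable to $|\xi-\eta|^{-n-sp}$, so the fractional seminorms on $Q_+$ and $B_j\cap\Omega$ (and likewise on $Q$ and $B_j$) are equivalent up to a constant $C(n,s,p,M)$; the $L^p$ part is handled by the same Jacobian bound. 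Once this equivalence is established, the chain pullback--reflect--pushforward--cutoff--zero-extend closes the argument.
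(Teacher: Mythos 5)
Your proposal is correct and follows essentially the same route as the paper: a finite cover of the compact boundary, a partition of unity, zero-extension for the interior piece (Lemmas \ref{giampi0}, \ref{giampi3}), bi-Lipschitz flattening and reflection across the flat boundary (Lemma \ref{giampi1}) for each boundary piece, with the bi-Lipschitz change of variable controlling both the $L^p$ norm and the Gagliardo seminorm via the comparability $M^{-1}|\xi-\eta|\le|T_j(\xi)-T_j(\eta)|\le M|\xi-\eta|$ and a.e.\ bounded Jacobian. The only cosmetic difference is the order of operations for $j\ge1$: you pull back $\psi_j u$ and apply a further cutoff after pushing forward, while the paper pulls back $u$, reflects, pushes forward to $w_j$, and multiplies by $\psi_j$ at the end; both variants yield the same extension on $\Omega$ and the same bound.
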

\begin{proof}
Since $\partial{\Omega}$ is compact, we can find a finite number of balls $B_j$ such that $\dys \partial{\Omega} \subset \bigcup_{j=1}^k B_j$ and so we can write $\dys \R^n= \bigcup_{j=1}^k B_j \cup \left( \R^n \setminus \partial{\Omega}  \right)$.
\vspace{1mm}

If we consider this covering, there exists a partition of unity  related to it, i.e. there exist $k+1$ smooth functions $\psi_0$, $\psi_1$,..., $\psi_k$ such that $\text{\rm spt}\,\psi_0\subset \R^n\setminus \partial{\Omega}$, $\text{\rm spt}\,\psi_j\subset B_j$ for any $j\in \{1,...,k\}$, $0\leq \psi_j \leq 1$ for any $j\in \{0,... ,k\}$ and $\dys \sum_{j=0}^k \psi_j=1$. Clearly, 
$$
u=\sum_{j=0}^k \psi_j u\,.
$$
By Lemma \ref{giampi3}, we know that $\psi_0\,u $ belongs to $W^{s,p}(\Omega)$. Furthermore, since $\psi_0\,u\equiv 0$ in a neighborhood of $\partial \Omega$, we can extend it to the whole of $\R^n$, by setting
$$
\widetilde{\psi_0\,u}(x)=
\begin{cases}
\psi_0\,u(x) & x\in\Omega, \\
\,\;0 & x\in \R^n\setminus \Omega
\end{cases}
$$ 
and $\widetilde{\psi_0\,u}\in W^{s,p}(\R^n)$. Precisely
\begin{equation}\label{e1}
\|\widetilde{\psi_0\,u}\|_{W^{s,p}(\R^n)}\,\leq \,C\,\| \psi_0\,u\|_{W^{s,p}(\Omega)}\,\leq\, C\, \|u\|_{W^{s,p}(\Omega)},
\end{equation}
where $C=C(n,s,p,\Omega)$ (possibly different step by step, see Lemma~\ref{giampi0} and Lemma~\ref{giampi3}). 
\vspace{2mm}

For any $j\in\{1,...,k\}$, let us consider $u|_{B_j\cap\Omega }$ and set 
$$
v_j(y):=u\left( T_j(y) \right) \qquad \mbox{for any}\ y\in Q_+,
$$
where $T_j: Q\to B_j$ is the isomorphism of class $C^{0,1}$ defined in Section~\ref{definizioni}. Note that such a $T_j$ exists by the regularity assumption on the domain $\Omega$.

\vspace{2mm}

Now, we state that $v_j \in W^{s,p}\left( Q_+ \right)$. Indeed, using the standard changing variable formula by setting $x=T_j(\hat{x})$ we have
\begin{eqnarray}\label{esteng}
&& \int_{Q_+}\int_{Q_+} \frac{|v(\hat{x})-v(\hat{y})|^p}{|\hat{x}-\hat{y}|^{n+sp}}\,d\hat{x}\,d\hat{y} \nonumber \\ 
&& \qquad \qquad \qquad \qquad= \ \int_{Q_+}\int_{Q_+} \frac{|u(T_j(\hat{x}))-u(T_j(\hat{y}))|^p}{|\hat{x}-\hat{y}|^{n+sp}}\,d\hat{x}\,d\hat{y} \nonumber \\
\nonumber \\
 && \qquad \qquad \qquad  \qquad= \  \int_{B_j\cap\Omega}\int_{B_j\cap \Omega} \frac{|u(x)-u(y)|^p}{|T_j^{-1}(x)-T_j^{-1}(y)|^{n+sp}}\,\det(T_j^{-1})dx\,dy \nonumber \\
 \nonumber \\
 && \qquad \qquad \qquad \qquad \leq \ C \int_{B_j\cap\Omega}\int_{B_j\cap \Omega} \frac{|u(x)-u(y)|^p}{|x-y|^{n+sp}}\,dx\,dy,
\end{eqnarray}
where \eqref{esteng} follows from the fact that $T_j$ is bi-Lipschitz.
Moreover, using Lemma \ref{giampi1} we can extend $v_j$ to all $Q$ so that the extension $\bar{v}_j$ belongs to $W^{s,p}(Q)$ and 
$$
\| \bar{v}_j\|_{W^{s,p}(Q)}\leq 4 \|v_j\|_{W^{s,p}(Q_+)}.
$$
\vspace{2mm}

We set 
$$
w_j(x):=\bar{v}_j\left( T^{-1}_j(x) \right) \ \ \mbox{for any}\ x\in B_j.  
$$
\vspace{1mm}

Since $T_j$ is bi-Lipschitz, by arguing as above it follows that $w_j\in W^{s,p}(B_j)$. Note that  $w_j\equiv u$ (and consequently $\psi_j\,w_j\equiv \psi_j \,u$) on $B_j\cap \Omega$. By definition $\psi_j\,w_j$ has compact support in $B_j$ and therefore, as done for $\psi_0\,u$, we can consider the extension $\widetilde{\psi_j\,w_j}$ to all $\R^n$ in such a way that $\widetilde{\psi_j\, w_j} \in W^{s,p}(\R^n)$. Also, using Lemma~\ref{giampi0}, Lemma~\ref{giampi1}, Lemma~\ref{giampi3} and estimate \eqref{esteng} we get
\begin{eqnarray}\label{e0}
\|\widetilde{\psi_j\,w_j}\|_{W^{s,p}(\R^n)}&\leq&  C \|\psi_j\,w_j\|_{W^{s,p}(B_j)} \, \leq C \,  \|w_j\|_{W^{s,p}(B_j)}\nonumber \\[1ex]
& \leq & C \|\bar{v}_j\|_{W^{s,p}(Q)} \,  \leq  \,  C \|v_j\|_{W^{s,p}(Q_+)}\nonumber \\[1ex]
& \leq  & C \|u\|_{W^{s,p}(\Omega\cap B_j)},
\end{eqnarray}
where $C=C(n,p,s,\Omega)$ and it is possibly different step by step.

\vspace{2mm}

Finally, let
$$
\tilde{u}=\widetilde{\psi_0\,u}+ \sum_{j=1}^k \widetilde{\psi_j \,w_j}
$$
be the extension of $u$ defined on all $\R^n$. By construction, it is clear that $\tilde{u}|_{\Omega}= u$ and, combining \eqref{e1} with \eqref{e0}, we get
$$
\|\tilde{u}\|_{W^{s,p}(\R^n)} \, \leq \,  C \|u\|_{W^{s,p}(\Omega)}
$$
with  $C=C(n,p,s,\Omega)$.
\end{proof}

\vspace{1mm}

\begin{corollary}
Let $p \in [1, +\infty)$, $s\in(0,1)$ and $\Omega $ be an open set in $\R^n$ of class $C^{0,1}$ with bounded boundary. Then for any $u\in W^{s,p}(\Omega)$, there exists a sequence $\{u_n\}\in C^{\infty}_0(\R^n)$ such that $u_n \to u$ as $n\rightarrow +\infty$ in $W^{s,p}(\Omega)$, i.e.,
$$\lim_{n\rightarrow +\infty} \|u_n-u\|_{W^{s,p}(\Omega)}=0\,.$$
\end{corollary}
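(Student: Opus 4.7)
The plan is to combine the extension theorem just proved (Theorem~\ref{thm_estensione}) with the density result on all of $\R^n$ (Theorem~\ref{density}). Given $u\in W^{s,p}(\Omega)$, the strategy is to transfer the approximation problem from $\Omega$ to $\R^n$, where smooth compactly supported functions are known to be dense, and then restrict back.

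First, I would invoke Theorem~\ref{thm_estensione}: since $\Omega$ is of class $C^{0,1}$ with bounded boundary, there exists $\tilde u\in W^{s,p}(\R^n)$ with $\tilde u|_\Omega = u$ and $\|\tilde u\|_{W^{s,p}(\R^n)}\le C\|u\|_{W^{s,p}(\Omega)}$. Next, by Theorem~\ref{density}, $C_0^\infty(\R^n)$ is dense in $W^{s,p}(\R^n)$, so I can choose a sequence $\{u_n\}\subset C_0^\infty(\R^n)$ with $\|u_n-\tilde u\|_{W^{s,p}(\R^n)}\to 0$ as $n\to\infty$.

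The final step is to observe that, directly from the definition~\eqref{def2} of the norm, restricting the domain of integration from $\R^n$ to $\Omega\subseteq\R^n$ can only decrease the norm: for every $v\in W^{s,p}(\R^n)$ one has
\begin{equation*}
\|v\|_{W^{s,p}(\Omega)}\;\le\;\|v\|_{W^{s,p}(\R^n)},
\end{equation*}
because both the $L^p$ integral and the Gagliardo double integral on $\Omega\times\Omega$ are bounded by the corresponding quantities on $\R^n$ and $\R^n\times\R^n$. Applying this with $v=u_n-\tilde u$ and using $\tilde u|_\Omega=u$ gives
\begin{equation*}
\|u_n-u\|_{W^{s,p}(\Omega)}=\|u_n-\tilde u\|_{W^{s,p}(\Omega)}\le \|u_n-\tilde u\|_{W^{s,p}(\R^n)}\longrightarrow 0,
\end{equation*}
which is the desired conclusion.

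There is no real obstacle in this argument: the two nontrivial ingredients (the extension theorem and the global density) have already been established, and the only thing left is the elementary monotonicity of the Gagliardo norm under shrinking the domain. The regularity hypothesis on $\partial\Omega$ enters solely through its role in Theorem~\ref{thm_estensione}; as highlighted in Example~\ref{exa_lip}, one cannot in general drop it, because without an extension operator the passage from $\R^n$-approximation to $\Omega$-approximation by functions in $C_0^\infty(\R^n)$ would fail.
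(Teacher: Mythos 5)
Your argument is correct and is exactly the one the paper has in mind: the paper simply states that the corollary ``follows directly by Theorem~\ref{density} and Theorem~\ref{thm_estensione},'' and you have filled in the short chain of steps (extend to $\R^n$, approximate globally by $C_0^\infty$, restrict back using the trivial monotonicity $\|v\|_{W^{s,p}(\Omega)}\le\|v\|_{W^{s,p}(\R^n)}$). Nothing to add.
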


\begin{proof}
The proof follows directly by Theorem~\ref{density} and Theorem~\ref{thm_estensione}.
\end{proof}

\vspace{3mm}
\section{Fractional Sobolev inequalities}\label{sec_sobolev}

In this section, we provide an elementary proof of a
Sobolev-type inequality involving the fractional norm $\|\cdot\|_{W^{s,p}}$ (see~Theorem~\ref{thm_sobolev} below).

The original proof is contained in the Appendix of~\cite{SV11} and it deals with the case $p=2$ (see, in particular, Theorem 7 there).  We note that when~$p\!\!~=~\!\!2$ and $s\in[1/2,1)$ some of the statements may be strengthened (see~\cite{Bre02}).
We also note that more general embeddings for the spaces $W^{s,p}$ can be obtained by interpolation techniques and by passing through Besov  spaces; see, for instance, \cite{Bes59, Bes59b, Usp60, Usp60b, Liz60}. For a more comprehensive treatment of fractional Sobolev-type inequalities we refer to \cite{Lio63, LM68, BM01, Ada75, Tar07} and the references therein.

We remark that the proof here is self-contained. Moreover, we will not make use of
Besov or fancy interpolation spaces.

\vspace{2mm}

In order to prove the Sobolev-type inequality in forthcoming Theorem~\ref{thm_sobolev}, we need some preliminary results. The first of them is an elementary estimate involving the measure of finite measurable sets $E$ in $\R^n$ as stated in the following lemma (see~\cite[Lemma A.1]{SV10} and also~\cite[Corollary~24 and~25]{CV11}).

\begin{lemma}\label{5yhh}
Fix $x\in\R^n$. Let $p\in [1,+\infty)$, $s\in(0,1)$ and $E\subset\R^n$ be a measurable set with finite
measure.
Then,
$$
\int_{\CC E}\frac{dy}{|x-y|^{n+sp}}\ge C\,|E|^{-sp/n},
$$
for a suitable constant $C=C(n,p,s)>0$.
\end{lemma}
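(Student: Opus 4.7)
The plan is a symmetrization-type comparison argument: among all measurable sets of a prescribed measure, the integral $\int_{\complement E}|x-y|^{-n-sp}\,dy$ is minimized when $E$ is a ball centered at $x$, because the integrand is a radially decreasing function of $|x-y|$. Once this reduction is done, the remaining integral is computed explicitly by polar coordinates.

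More concretely, I would first choose $r>0$ so that $|B_r(x)|=|E|$, namely $r=\bigl(|E|/|B_1|\bigr)^{1/n}$. The key observation is then the decomposition
\begin{equation*}
\int_{\complement E}\frac{dy}{|x-y|^{n+sp}}-\int_{\complement B_r(x)}\frac{dy}{|x-y|^{n+sp}}=\int_{B_r(x)\setminus E}\frac{dy}{|x-y|^{n+sp}}-\int_{E\setminus B_r(x)}\frac{dy}{|x-y|^{n+sp}}.
\end{equation*}
On $B_r(x)\setminus E$ one has $|x-y|\le r$, so the integrand is at least $r^{-n-sp}$; on $E\setminus B_r(x)$ one has $|x-y|\ge r$, so the integrand is at most $r^{-n-sp}$. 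Since $|B_r(x)|=|E|$ forces $|B_r(x)\setminus E|=|E\setminus B_r(x)|$, the right-hand side of the displayed identity is non-negative. Hence
\begin{equation*}
\int_{\complement E}\frac{dy}{|x-y|^{n+sp}}\ \ge\ \int_{\complement B_r(x)}\frac{dy}{|x-y|^{n+sp}}.
\end{equation*}

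To finish, I would compute the comparison integral by passing to polar coordinates centered at $x$:
\begin{equation*}
\int_{\complement B_r(x)}\frac{dy}{|x-y|^{n+sp}}=\omega_{n-1}\int_{r}^{+\infty}\rho^{-1-sp}\,d\rho=\frac{\omega_{n-1}}{sp}\,r^{-sp}.
\end{equation*}
Substituting $r=(|E|/|B_1|)^{1/n}$ yields $r^{-sp}=|B_1|^{sp/n}|E|^{-sp/n}$, giving the claimed inequality with the explicit constant $C(n,p,s)=\frac{\omega_{n-1}}{sp}|B_1|^{sp/n}$.

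There is no real obstacle here; the only subtlety is recognizing that the ball is the extremal set, which is a one-line rearrangement argument rather than a full symmetric decreasing rearrangement theorem, and correctly handling the fact that $E$ and $B_r(x)$ may overlap by splitting into the two symmetric difference pieces of equal measure. Everything else is a direct computation.
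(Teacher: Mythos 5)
Your proof is correct and is essentially the paper's argument: both choose a ball $B_r(x)$ with $|B_r(x)|=|E|$, use the equal measures of the two pieces of the symmetric difference together with the monotonicity of $|x-y|^{-n-sp}$ to show $\int_{\complement E}\ge\int_{\complement B_r(x)}$, and then evaluate the latter by polar coordinates. The paper phrases the comparison by splitting $\int_{\complement E}$ and lowering the inner piece, while you state it as a non-negativity of the difference over the symmetric difference, but these are the same estimate.
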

\begin{proof}
We set
$$
\rho:=\left( \frac{|E|}{\omega_n}\right)^{\!\frac{1}{n}}
$$
and then it follows
\begin{eqnarray*}
|(\CC E)\cap B_{\rho}(x)| \! & = & \! |B_{\rho}(x)|-|E\cap B_{\rho}(x)| \, = \, |E|-|E\cap B_{\rho}(x)| \\
& = & \! |E\cap\CC B_{\rho}(x)|.
\end{eqnarray*}
Therefore,
\begin{eqnarray*}
\int_{\CC E} \frac{dy}{|x-y|^{n+sp}} \! & = & \! \int_{(\CC E) \cap B_{\rho}(x)} \frac{dy}{|x-y|^{n+sp}}
+\int_{(\CC E)\cap \CC B_{\rho}(x)} \frac{dy}{|x-y|^{n+sp}} \\ \\
& \geq & \! \int_{(\CC E) \cap B_{\rho}(x)} \frac{dy}{\rho^{n+sp}}
+\int_{(\CC E)\cap \CC B_{\rho}(x)} \frac{dy}{|x-y|^{n+sp}}\\ \\
& = & \! \frac{|(\CC E)\cap B_{\rho}(x)|}{\rho^{n+sp}}  
+\int_{(\CC E)\cap \CC B_{\rho}(x)} \frac{dy}{|x-y|^{n+sp}} \\ \\
& = & \! \frac{|E\cap \CC B_{\rho}(x)|}{\rho^{n+sp}} 
+\int_{(\CC E)\cap \CC B_{\rho}(x)} \frac{dy}{|x-y|^{n+sp}} \\ \\
& \geq & \!  \! \int_{E \cap \CC B_{\rho}(x)} \frac{dy}{|x-y|^{n+sp}}
+\int_{(\CC E)\cap \CC B_{\rho}(x)} \frac{dy}{|x-y|^{n+sp}} \\ \\
& = & \! \int_{\CC B_{\rho}(x)} \frac{dy}{|x-y|^{n+sp}}.
\end{eqnarray*}
The desired result easily follows by using polar coordinates centered at $x$.
\end{proof}


\vspace{0.5mm}

Now, we recall a general statement about a useful summability
property (see~\cite[Lemma 5]{SV11}. For related results, see also~\cite[Lemma~4] {Dyd10}).

\begin{lemma} \label{OS1}
Let $s\in (0,1)$ and $p\in[1,+\infty)$ such that $sp<n$. Fix~$T> 1$; let $N\in\Z$ and
\begin{equation}\label{ak}\begin{split}\\ &{\mbox{
$a_k$
be a bounded, nonnegative, decreasing sequence}}\\&\qquad{\mbox{
with~$a_k=0$ for any $k\ge N$.}}
\end{split}\end{equation}
Then,
$$ \sum_{k\in\Z} a_k^{(n-sp)/n} T^{k}\le C\,
\sum_{{k\in\Z}\atop{a_k\ne0}}
a_{k+1} a_k^{-sp/n} T^{k},
$$
for a suitable constant $C=C(n,p,s,T)>0$, independent
of~$N$.\end{lemma}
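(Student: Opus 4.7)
The plan is to partition the indices $\{k : a_k > 0\}$ into ``chunks'' on which $a_k$ stays within a bounded ratio, then match the two sides chunk by chunk. Set $\alpha := (n-sp)/n$ and $\beta := sp/n$, so $\alpha,\beta > 0$ and $\alpha+\beta = 1$; the hypothesis $sp<n$ is what gives $\alpha > 0$. Pick the doubling scale $\mu := (2T)^{1/\alpha} > 1$. First I would build the chunks greedily: set $j_0 := \max\{k : a_k > 0\}$ and, as long as possible, define $j_{i+1} := \max\{k < j_i : a_k \ge \mu\,a_{j_i}\}$. Boundedness of $a$ ensures termination at some $j_M$. Writing $A_i := a_{j_i}$, one then has $A_{i+1} \ge \mu A_i$, and by maximality $a_k \in [A_i,\mu A_i)$ on each chunk $\Delta_i := (j_{i+1},j_i]$ for $i<M$ and on $\Delta_M := (-\infty,j_M]$.

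On each $\Delta_i$ the estimate $a_k^\alpha \le \mu^\alpha A_i^\alpha$ together with the geometric sum $\sum_{k\le j_i}T^k = T^{j_i+1}/(T-1)$ yields
$$ \sum_{k\in\Z} a_k^\alpha T^k \;\le\; \frac{\mu^\alpha T}{T-1}\sum_{i=0}^M L_i, \qquad L_i := A_i^\alpha T^{j_i}. $$
For the RHS, I would call a chunk \emph{fat} if $j_i-j_{i+1}\ge 2$ (and always count $\Delta_M$ as fat). On the interior of a fat chunk, both $a_k$ and $a_{k+1}$ lie in $[A_i,\mu A_i)$, whence $a_{k+1}\,a_k^{-\beta} \ge \mu^{-\beta} A_i^\alpha$; summing against $T^k$ over the interior then produces a contribution of at least $c\,L_i$ to the RHS, with $c = \mu^{-\beta}/T$.

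The main obstacle will be the \emph{singleton} chunks ($j_i - j_{i+1} = 1$), whose empty interior gives no such RHS mass. To absorb them I would amortize against the fat chunks: the choice $\mu = (2T)^{1/\alpha}$ is calibrated precisely so that, for any singleton $i$,
$$ \frac{L_{i+1}}{L_i} \;=\; \left(\frac{A_{i+1}}{A_i}\right)^{\!\alpha} T^{-1} \;\ge\; \frac{\mu^\alpha}{T} \;=\; 2. $$
Hence on any maximal run of consecutive singletons terminated by a fat index $i'$, one has $L_i \le 2^{-(i'-i)} L_{i'}$; summing the resulting geometric series gives $\sum_{i\text{ singleton}} L_i \le \sum_{i\text{ fat}} L_i$, so $\sum_i L_i \le 2\sum_{i\text{ fat}} L_i \le (2/c)\,\mathrm{RHS}$. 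Combining this with the LHS bound above yields the desired inequality with a constant $C = C(n,p,s,T)$, independent of $N$.
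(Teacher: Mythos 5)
Your proof is correct, but it takes a genuinely different route from the paper's. The paper's argument is a one-shot self-improving H\"older estimate: shift the index so that $\tfrac1T\sum_k a_k^{(n-sp)/n}T^k = \sum_k a_{k+1}^{(n-sp)/n}T^k$, note that $a_k=0\Rightarrow a_{k+1}=0$ so the sum can be restricted to $a_k\ne0$, split each summand as the product of $a_k^{sp/(n\beta)}T^{k/\alpha}$ and $a_{k+1}^{1/\beta}a_k^{-sp/(n\beta)}T^{k/\beta}$ with the conjugate pair $\alpha=n/(sp)$, $\beta=n/(n-sp)$, and apply H\"older to land on $S/T\le S^{sp/n}R^{(n-sp)/n}$, where $S$ is the left-hand series and $R$ the right-hand one. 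Dividing by $S^{sp/n}$ — legitimate precisely because hypothesis \eqref{ak} forces both series to be finite — gives $S\le T^{n/(n-sp)}R$. Your argument replaces this by a multiplicative stopping-time decomposition of the index set and an amortization of the singleton blocks (where the right-hand side picks up no mass) against the geometric growth of the block weights $L_i=A_i^\alpha T^{j_i}$; the calibration $\mu=(2T)^{1/\alpha}$ is exactly what makes singleton blocks cost at most half of the next one. Both are sound. The paper's route is shorter and produces the clean constant $T^{n/(n-sp)}$, at the price of the slightly opaque ``divide a sum by a power of itself'' step; your route yields a worse constant, roughly $2(2T)^{n/(n-sp)}T^2/(T-1)$, but makes visible what is actually driving the inequality — namely that $a_{k+1}a_k^{-sp/n}$ is comparable to $a_k^{(n-sp)/n}$ at scales where $a_k$ is roughly constant, and the $T^k$ weight ensures the missed singleton scales are negligible relative to the first fat one below them. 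The only small point you leave implicit is the degenerate case $a\equiv 0$, where $j_0$ is undefined; the inequality is trivially $0\le 0$ there.
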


\begin{proof} 
By~\eqref{ak},
\begin{equation}\label{are c}
{\mbox{both }}
\sum_{k\in\Z} a_k^{(n-sp)/n} T^{k}
{\mbox{ and }}
\sum_{{k\in\Z}\atop{a_k\ne0}}
a_{k+1} a_k^{-sp/n} T^{k}
{\mbox{ are convergent series.}}
\end{equation}
Moreover, since $a_k$ is nonnegative and
decreasing, we have that
if $a_{k}=0$, then $a_{k+1}=0$. Accordingly,
$$ \sum_{k\in\Z} a_{k+1}^{(n-sp)/n} T^{k}=
\sum_{{k\in\Z}\atop{a_k\ne 0}} a_{k+1}^{(n-sp)/n} T^{k}.$$
Therefore, we may use the H\"older
inequality with exponents $\alpha:=n/sp$ and $\beta:=n/(n-sp)$
by arguing as follows.

\begin{eqnarray*}
&& \frac{1}{T}
\sum_{k\in\Z} a_{k}^{(n-sp)/n} T^{k}
=
\sum_{k\in\Z} a_{k+1}^{(n-sp)/n} T^{k}\\&&\qquad=
\sum_{{k\in\Z}\atop{a_k\ne 0}} a_{k+1}^{(n-sp)/n} T^{k}
\\&&\qquad=
\sum_{{k\in\Z}\atop{a_k\ne0}} \Big( a_{k}^{sp/(n\beta)}
T^{k/\alpha}\Big) \Big( a_{k+1}^{1/\beta} a_{k}^{-sp/
(n\beta)} T^{k/\beta}\Big)\\
&&\qquad\le
\left(
\sum_{k\in\Z} \Big( a_{k}^{sp/(n\beta)}
T^{k/\alpha}\Big)^\alpha\right)^{1/\alpha} \left(
\sum_{{k\in\Z}\atop{a_k\ne 0}}\Big( a_{k+1}^{1/\beta}
a_{k}^{-sp/(n\beta)} T^{k/\beta}\Big)^\beta\right)^{1/\beta}
\\
&&\qquad\le
\left(
\sum_{k\in\Z} a_{k}^{(n-sp)/n} T^{k}\right)^{sp/n} \left(
\sum_{{k\in\Z}\atop{a_k\ne0}} a_{k+1}
a_{k}^{-sp/n} T^{k} \right)^{(n-sp)/n}\!.
\end{eqnarray*}
So, recalling \eqref{are c},
we obtain the desired
result.
\end{proof}

We use the above tools to
deal with the measure theoretic properties of the
level sets of the functions (see \cite[Lemma 6]{SV11}).

\begin{lemma}\label{OS2}
Let $s\in (0,1)$ and $p\in[1,+\infty)$ such that $sp<n$. Let 
\begin{equation}\label{XX}{\mbox{
$f\in L^\infty(\R^n)$ be compactly
supported. }}\end{equation}
For any $k\in\Z$ let
\begin{equation}\label{def_ak}
a_k:=
\big| \{ |f|>2^k\} \big|.
\end{equation}
Then,
$$ \int_{\R^n}\int_{\R^n} \frac{|f(x)-f(y)|^p}{|x-y|^{n+sp}}\,dx\,dy\ge
C \sum_{{k\in\Z}\atop{a_k\ne0}}
a_{k+1} a_k^{-sp/n} 2^{pk},$$
for a suitable constant $C=C(n,p,s)>0$.
\end{lemma}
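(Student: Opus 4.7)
The plan is to decompose the domain of integration via dyadic level sets of $|f|$. Set $E_k := \{|f|>2^k\}$ (so that $|E_k|=a_k$) and $B_k := E_k \setminus E_{k+1}$, and note that the $B_k$ are pairwise disjoint with $|B_k|=a_k-a_{k+1}$. The strategy is to bound the double integral from below by its restriction to the disjoint regions $B_k \times \CC E_{k-1}$, estimate the inner $y$-integral by Lemma \ref{5yhh}, sum over $k$, and then conclude by an index shift that exploits the monotonicity of $a_k$.

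First, for $x\in B_k$ and $y\in \CC E_{k-1}$ one has $|f(x)|>2^k$ and $|f(y)|\le 2^{k-1}$, so
$$|f(x)-f(y)| \,\ge\, |f(x)|-|f(y)| \,>\, 2^k - 2^{k-1} \,=\, 2^{k-1}.$$
Lemma \ref{5yhh} applied to $E=E_{k-1}$ yields $\int_{\CC E_{k-1}} |x-y|^{-n-sp}\,dy \ge C\,a_{k-1}^{-sp/n}$ for every $x\in\R^n$. Combining these gives
$$\int_{B_k}\!\int_{\CC E_{k-1}}\!\frac{|f(x)-f(y)|^p}{|x-y|^{n+sp}}\,dy\,dx \,\ge\, C\,2^{(k-1)p}\,|B_k|\,a_{k-1}^{-sp/n}.$$

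Next, since the $B_k$ are pairwise disjoint in $x$, one has $\sum_k \chi_{B_k}(x)\chi_{\CC E_{k-1}}(y)\le 1$ pointwise, so summing the previous inequality over $k$ and applying Tonelli yields
$$\int_{\R^n}\!\int_{\R^n}\!\frac{|f(x)-f(y)|^p}{|x-y|^{n+sp}}\,dx\,dy \,\ge\, C\sum_k 2^{(k-1)p}(a_k - a_{k+1})\,a_{k-1}^{-sp/n},$$
where I have used $|B_k| = a_k - a_{k+1}$. Relabeling $\ell = k-1$ rewrites the right-hand side as $C\sum_k 2^{kp}(a_{k+1} - a_{k+2})\,a_k^{-sp/n}$, which differs from the target $\sum_k 2^{kp} a_{k+1} a_k^{-sp/n}$ only by a correction term.

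To absorb that correction, I would split the sum as $\sum_k 2^{kp}a_{k+1} a_k^{-sp/n} - \sum_k 2^{kp}a_{k+2} a_k^{-sp/n}$ and treat the second piece separately. Reindexing it with $i=k+1$ produces $2^{-p}\sum_i 2^{ip}a_{i+1}a_{i-1}^{-sp/n}$, and since $\{a_k\}$ is nonincreasing, $a_{i-1}\ge a_i$, whence $a_{i-1}^{-sp/n} \le a_i^{-sp/n}$ for every $i$ with $a_i\ne 0$ (while terms with $a_i=0$ contribute nothing, since then $a_{i+1}=0$ as well). Thus the subtracted piece is bounded by $2^{-p}\sum_i 2^{ip}a_{i+1}a_i^{-sp/n}$, yielding
$$\int_{\R^n}\!\int_{\R^n}\!\frac{|f(x)-f(y)|^p}{|x-y|^{n+sp}}\,dx\,dy \,\ge\, C(1-2^{-p})\!\!\sum_{k:\,a_k\ne 0}\!\! 2^{kp}\,a_{k+1}\,a_k^{-sp/n},$$
which is the claimed inequality since $1-2^{-p}>0$ for $p\ge 1$. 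The main obstacle is the choice of disjoint decomposition in the second step: the natural pairing $E_{k+1}\times \CC E_k$ overcounts each pair $(x,y)$ by roughly $\log_2(|f(x)|/|f(y)|)$, so one must instead pair the annular shells $B_k$ with $\CC E_{k-1}$ shifted one level down, which simultaneously secures $x$-disjointness and a clean dyadic lower bound on $|f(x)-f(y)|$.
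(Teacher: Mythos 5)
Your argument is correct, and the core geometric idea is the same as the paper's: decompose $\R^n$ into dyadic shells $B_k=\{2^k<|f|\le 2^{k+1}\}$, pair $x\in B_k$ with $y\in\CC\{|f|>2^{k-1}\}$ so that $|f(x)-f(y)|\ge 2^{k-1}$, apply Lemma~\ref{5yhh} to the inner $y$-integral, and then absorb the resulting difference term using monotonicity of $a_k$ and the fact that $p\ge 1$ (giving the factor $1-2^{-p}>0$). Where you depart from the paper is in the bookkeeping of the absorption. The paper writes $d_i=a_i-\sum_{\ell\ge i+1}d_\ell$, so the error becomes a \emph{double} sum $\sum_i\sum_{\ell\ge i+1}2^{pi}a_{i-1}^{-sp/n}d_\ell$; controlling it requires a Fubini-type swap of the two summation indices (the paper's inequality~\eqref{X5}, with the auxiliary quantity $S$) plus a separate lower bound of the same form (inequality~\eqref{X5bis}), and then a two-sided argument moving the double integral across the inequality. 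You instead use the one-step identity $|B_k|=a_k-a_{k+1}$, so the error is a \emph{single} sum $\sum_k 2^{kp}a_{k+2}a_k^{-sp/n}$; a single index shift $i=k+1$ and the monotonicity $a_{i-1}^{-sp/n}\le a_i^{-sp/n}$ (valid on $\{a_i\ne 0\}$, the only terms that survive) then absorbs it directly. This buys a noticeably shorter and cleaner computation at no cost in generality; the paper's more elaborate route is essentially an unrolled version of the same telescoping identity. Two small points worth making explicit in a write-up: (i) the sum after applying Lemma~\ref{5yhh} should be restricted to $k$ with $a_{k-1}\ne0$ from the start, since otherwise $|B_k|\,a_{k-1}^{-sp/n}$ is $0\cdot\infty$; (ii) the split of $\sum 2^{kp}(a_{k+1}-a_{k+2})a_k^{-sp/n}$ into two series is legitimate because both series converge (the tails are $O(2^{kp})$ as $k\to-\infty$, and vanish identically for $k$ large, thanks to $f\in L^\infty$ with compact support) — you should state this convergence before subtracting.
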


\begin{proof} Notice that
$$ \big| |f(x)|-|f(y)|\big|\le |f(x)-f(y)|,$$
and so,
by possibly replacing~$f$ with~$|f|$, we may consider the case in
which~$f\ge0$.

We define
\begin{equation}\label{t6}
A_k:=\{ |f|>2^k\}.\end{equation}
We remark that~$A_{k+1}\subseteq A_k$, hence
\begin{equation}\label{X0}
a_{k+1}\le a_k .
\end{equation}
We define
$$ D_k:=A_k\setminus A_{k+1}=\{ 2^k<f\le 2^{k+1}\}
\qquad{\mbox{
and }}\qquad d_k:=|D_k|.$$
Notice that
\begin{equation}\label{YY}{\mbox{$d_k$
and $a_k$ are bounded and they become zero when
$k$ is large enough,}}\end{equation} thanks to~\eqref{XX}. Also,
we observe that the $D_k$'s are disjoint, that
\begin{equation}\label{X1}
\bigcup_{{\ell\in\Z}\atop{\ell\le k}} D_\ell\,=\,\CC A_{k+1}
\end{equation}
and that
\begin{equation}\label{X2}
\bigcup_{{\ell\in\Z}\atop{\ell\ge k}} D_\ell\,=\, A_{k}.
\end{equation}
As a consequence of \eqref{X2}, we have that
\begin{equation}\label{YYY}
a_k=\sum_{{\ell\in\Z}\atop{\ell\ge k}} d_\ell
\end{equation}
and so
\begin{equation}\label{X3}
d_k=a_k -\sum_{{\ell\in\Z}\atop{\ell\ge k+1}} d_\ell.
\end{equation}
We stress that the series in \eqref{YYY}
is convergent,
due to~\eqref{YY}, thus so is the series
in~\eqref{X3}. Similarly, we can define
the convergent series
\begin{equation}\label{SS4}
S:=\sum_{{\ell\in\Z}\atop{
a_{\ell-1}\ne 0
}} 2^{p\ell} a_{\ell-1}^{-sp/n}d_\ell.\end{equation}
We notice that~$D_k\subseteq A_k\subseteq A_{k-1}$,
hence~$a_{i-1}^{-sp/n}
d_\ell \le a_{i-1}^{-sp/n} a_{\ell-1}$. Therefore
\begin{equation}\label{FT}
\begin{split}&
\Big\{ (i,\ell)\in \Z {\mbox{ s.t. }}
a_{i-1}\ne 0 {\mbox{ and }}
a_{i-1}^{-sp/n} d_\ell \ne 0\Big\} \\
&\qquad \qquad \qquad \qquad\qquad \qquad\quad
\,\subseteq\,
\Big\{ (i,\ell)\in \Z {\mbox{ s.t. }}
a_{\ell-1}\ne 0 \Big\}.\end{split}
\end{equation}
We use \eqref{FT}
and~\eqref{X0} in the following computation:
\begin{eqnarray}\label{X5}
\sum_{ {i\in\Z}\atop{a_{i-1}\ne0}}
\sum_{ {\ell\in\Z}\atop{\ell\ge i+1} } 2^{pi} a_{i-1}^{-sp/n} d_\ell
\! & = & \!
\sum_{{i\in\Z}\atop{a_{i-1}\ne0}}
\sum_{ {\ell\in\Z}\atop{ {\ell\ge i+1}\atop{
a_{i-1}^{sp/n} d_\ell \ne 0} }} 2^{pi} a_{i-1}^{-sp/n}
d_\ell
\nonumber\\
&\le &\!
\sum_{i\in\Z}
\sum_{ {\ell\in\Z}\atop{ {\ell\ge i+1}\atop{a_{\ell-1}\ne0} } }
2^{pi}  a_{i-1}^{-sp/n} d_\ell
\nonumber\\
& = & \!\sum_{ {\ell\in\Z}\atop{a_{\ell-1}\ne0} } 
\sum_{ {i\in\Z}\atop{i\le\ell-1} } 2^{pi} a_{i-1}^{-sp/n} d_\ell
\nonumber\\
& \le & \!
\sum_{{\ell\in\Z}\atop{a_{\ell-1}\ne0}} \sum_{{i\in\Z}\atop{
i\le\ell-1}}
2^{pi} a_{\ell-1}^{-sp/n} d_\ell
\nonumber\\
& = &\!
\sum_{{\ell\in\Z}\atop{a_{\ell-1}\ne0}} \sum_{k=0}^{+\infty}
2^{p(\ell-1)} 2^{-pk} a_{\ell-1}^{-sp/n} d_\ell
\ \le \  S.
\end{eqnarray}
Now, we fix $i\in\Z$ and~$x\in D_i$: then, for any~$j\in\Z$
with~$j\le i-2$ and any~$y\in D_j$ we have that
$$
|f(x)-f(y)| \, \ge \, 2^i-2^{j+1} \, \ge \,  2^i-2^{i-1} \, = \, 2^{i-1}
$$
and therefore, recalling~\eqref{X1},
\begin{eqnarray*}
\sum_{{j\in\Z}\atop{j\le i-2}}\int_{D_j}
\frac{|f(x)-f(y)|^p}{|x-y|^{n+sp}}\,dy
\! & \ge & \! 2^{p(i-1)}
\sum_{{j\in\Z}\atop{j\le i-2}}\int_{D_j}
\frac{dy}{|x-y|^{n+sp}}\\
& = & \! 2^{p(i-1)} \int_{\CC A_{i-1}}
\frac{dy}{|x-y|^{n+sp}}.
\end{eqnarray*}
This and Lemma~\ref{5yhh}
imply that, for any~$i\in\Z$
and any~$x\in D_i$, we have that
$$ \sum_{ {j\in\Z}\atop{j\le i-2} }\int_{D_j}
\frac{|f(x)-f(y)|^p}{|x-y|^{n+sp}}\,dy
\ge c_o 2^{pi} a_{i-1}^{-sp/n},$$
for a suitable~$c_o>0$.

As a consequence,
for any~$i\in\Z$,
\begin{equation}\label{X4a}
\sum_{ {j\in\Z}\atop{j\le i-2} }\int_{D_i\times D_j}
\frac{|f(x)-f(y)|^p}{|x-y|^{n+sp}}\,dx\,dy
\ge c_o 2^{pi} a_{i-1}^{-sp/n} d_i\,.
\end{equation}
Therefore, by~\eqref{X3}, we conclude that,
for any~$i\in\Z$,
\begin{eqnarray}\label{X4}
\sum_{ {j\in\Z}\atop{j\le i-2} }\int_{D_i\times D_j}
\!\frac{|f(x)-f(y)|^p}{|x-y|^{n+sp}}\,dx\,dy
\ge c_o \left[ 2^{pi} a_{i-1}^{-sp/n} a_i
-\sum_{{\ell\in\Z}\atop{\ell\ge i+1}
}2^{pi} a_{i-1}^{-sp/n} d_\ell 
\right].\nonumber \\
\end{eqnarray}
By \eqref{SS4} and \eqref{X4a}, we have that
\begin{equation}\label{X5bis}
\sum_{{i\in\Z}\atop{a_{i-1}\ne0}}
\sum_{{j\in\Z}\atop{j\le i-2}}\int_{D_i\times D_j}
\frac{|f(x)-f(y)|^p}{|x-y|^{n+sp}}\,dx\,dy
\ge c_o S.\end{equation}
Then, using~\eqref{X4}, \eqref{X5} and~\eqref{X5bis},
\begin{eqnarray*}
&& \sum_{{i\in\Z}\atop{a_{i-1}\ne0}}
\sum_{{j\in\Z}\atop{j\le
i-2}}\int_{D_i\times D_j}
\frac{|f(x)-f(y)|^p}{|x-y|^{n+sp}}\,dx\,dy
\\
&& \qquad \qquad  \geq \,
c_o \!\left[ \sum_{{i\in\Z}\atop{a_{i-1}\ne0}}
2^{pi} a_{i-1}^{-sp/n} a_i
\,-\,
\sum_{{i\in\Z}\atop{a_{i-1}\ne0}}
\sum_{{\ell\in\Z}
\atop{\ell\ge i+1}}2^{pi} a_{i-1}^{-sp/n} d_\ell\right]
\\
&& \qquad \qquad \geq \, c_o\!\!\left[ \sum_{{i\in\Z}\atop{a_{i-1}\ne0}}
2^{pi} a_{i-1}^{-sp/n} a_i
\,-\, S
\right]
\\
&& \qquad \qquad  \geq \,  c_o
\!\sum_{{i\in\Z}\atop{a_{i-1}\ne0}}
2^{pi} a_{i-1}^{-sp/n} a_i
- \!\!\sum_{{i\in\Z}\atop{a_{i-1}\ne0}}
\sum_{{j\in\Z}\atop{j\le
i-2}}\!\int_{D_i\times D_j}
\!\frac{|f(x)-f(y)|^p}{|x-y|^{n+sp}}\,dx\,dy.
\end{eqnarray*}
That is, by taking the last term to
the left hand side,
\begin{equation}\label{XF}
\sum_{{i\in\Z}\atop{a_{i-1}\ne0}}
\sum_{{j\in\Z}\atop{j\le
i-2}}\!\!\int_{D_i\times D_j}
\frac{|f(x)-f(y)|^p}{|x-y|^{n+sp}}\,dx\,dy
\,\ge\,
c_o\!\!\sum_{{i\in\Z}\atop{a_{i-1}\ne0}}
2^{pi} a_{i-1}^{-sp/n} a_i,
\end{equation}
up to relabeling the constant $c_0$.
\vspace{1mm}

On the other hand, by symmetry,
\begin{eqnarray}\label{XFF}
&& \int_{\R^n\times\R^n}
\frac{|f(x)-f(y)|^p}{|x-y|^{n+sp}}\,dx\,dy \nonumber \\
&& \qquad \qquad \quad = \, \sum_{{i,j\in\Z}} \int_{D_i\times D_j}
\frac{|f(x)-f(y)|^p}{|x-y|^{n+sp}}\,dx\,dy \nonumber\\
\nonumber \\
&& \qquad \qquad \quad \geq 
2\sum_{{i,j\in\Z}\atop{j< i}}
\int_{D_i\times D_j}
\frac{|f(x)-f(y)|^p}{|x-y|^{n+sp}}\,dx\,dy \nonumber \\
\nonumber \\
&& \qquad \qquad \quad  \ge  
2\sum_{{i\in\Z}\atop{a_{i-1}\ne0}}
\sum_{{j\in\Z}\atop{j\le
i-2}}\int_{D_i\times D_j}
\frac{|f(x)-f(y)|^p}{|x-y|^{n+sp}}\,dx\,dy.
\end{eqnarray}
Then, the desired result plainly follows
from~\eqref{XF} and~\eqref{XFF}.
\end{proof}
\vspace{1mm}

\begin{lemma}\label{lem_troncata} Let $q\in [1,\infty)$. Let $f:\R^n\to\R$ be a measurable function. For any $N\in \N$, let
\begin{equation}\label{def_troncata}
f_N(x):=\max\big\{\min\{f(x),N\},\, -N\big\} \ \ \forall x \in\R^n.
\end{equation}
Then
\begin{equation*}
\lim_{N\to+\infty}\|f_N\|_{L^q(\R^n)}=\|f\|_{L^q(\R^n)}.
\end{equation*}
\end{lemma}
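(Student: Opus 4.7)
The plan is to reduce everything to the Monotone Convergence Theorem applied to the sequence $|f_N|^q$. The key observation is that the truncation \eqref{def_troncata} can be rewritten as $|f_N(x)| = \min\{|f(x)|, N\}$ for every $x \in \R^n$; this is an elementary case-by-case check from the definition of the composition $\max\{\min\{\cdot,N\},-N\}$.

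From this identity two facts follow immediately. First, for each $x$ the sequence $|f_N(x)|$ is monotonically nondecreasing in $N$, since $\min\{|f(x)|, N\}$ increases with $N$. Second, $|f_N(x)| \to |f(x)|$ pointwise as $N \to \infty$: indeed, once $N \ge |f(x)|$ the minimum equals $|f(x)|$. Raising to the power $q \ge 1$ preserves monotonicity, so $|f_N|^q \nearrow |f|^q$ pointwise everywhere.

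Now I would invoke the Monotone Convergence Theorem to conclude
$$
\lim_{N\to\infty}\int_{\R^n}|f_N(x)|^q\,dx \; = \; \int_{\R^n}|f(x)|^q\,dx,
$$
with the equality understood in $[0,+\infty]$. Taking $q$-th roots on both sides yields $\|f_N\|_{L^q(\R^n)} \to \|f\|_{L^q(\R^n)}$, again allowing the value $+\infty$ on either side consistently. In particular, both the case $f \in L^q(\R^n)$ and the case $f \notin L^q(\R^n)$ are handled uniformly, since MCT does not require finiteness of the limiting integral.

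There is essentially no obstacle here: the only point that requires a moment of care is verifying the identity $|f_N| = \min\{|f|, N\}$ (splitting into the cases $f(x) \ge N$, $-N \le f(x) \le N$, and $f(x) \le -N$), after which MCT does all the work. No density, no smoothing, and no appeal to the earlier machinery of the paper is needed.
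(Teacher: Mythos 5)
Your proof is correct. The key observation — that $|f_N| = \min\{|f|,N\}$, so $|f_N|^q$ increases pointwise to $|f|^q$ — is the same one the paper uses, but you then invoke the Monotone Convergence Theorem in a single stroke, whereas the paper splits the argument into two inequalities: Fatou's Lemma gives $\liminf_N \|f_N\|_{L^q} \ge \|f\|_{L^q}$, and the trivial pointwise bound $|f_N|\le|f|$ gives the reverse. The two routes are essentially equivalent in strength here, since you have genuine monotonicity; your packaging via MCT is arguably tidier. The paper's Fatou-plus-domination split would still work if the truncations were not monotone, so it is marginally more robust, but for this specific lemma the MCT argument is the natural one and handles the $\|f\|_{L^q}=+\infty$ case exactly as cleanly as you claim.
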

\begin{proof}
We denote by $|f|_N$ the function obtained by cutting $|f|$ at level $N$. We have that $|f|_N=|f_N|$ and so, by Fatou Lemma, we obtain that
\begin{eqnarray*}
\liminf_{N\to+\infty}\|f_N\|_{L^q(\R^n)} \! & = & \! \liminf_{N\to+\infty}\left(\int_{\R^n}|f|^q_N\right)^{\!\frac{1}{q}}
 \, \geq \,  \left(\int_{R^n}|f|^q\right)^{\!\frac{1}{q}} \, =  \, \|f\|_{L^q(\R^n)}.
\end{eqnarray*}
The reverse inequality easily follows by the fact that $|f|_N(x)\leq |f(x)|$ for any $x\in \R^n$.
\end{proof}

\vspace{1mm}

Taking into account the previous lemmas, we are able to give an elementary proof of the Sobolev-type inequality stated in the following theorem.

\begin{thm}\label{thm_sobolev}
Let $s\in (0,1)$ and $p\in[1,+\infty)$ such that $sp<n$.
Then there exists a positive constant $C=C(n,p,s)$ such that, for any measurable and compactly
supported function $f:\R^n\rightarrow\R$, we have
\begin{equation}\label{eq_sobolev} \| f\|_{L^{p^{\star}} (\R^n)}^p\le C
\int_{\R^n} \int_{\R^n} \frac{|f(x)-f(y)|^p}{|x-y|^{n+sp}}
\,dx\,dy.\end{equation}
where ${p^{\star}}=p^{\star}(n,s)$ is the so-called ``fractional critical exponent'' and it is equal to $np/(n-sp)$.\vspace{0.5mm}

Consequently, the space $W^{s,p}(\R^n)$ is continuously embedded in $L^q(\R^n)$ for any $q\in [p, p^\star]$.
\end{thm}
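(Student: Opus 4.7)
The plan is to assemble Lemmas \ref{OS1}, \ref{OS2} and \ref{lem_troncata} into a single chain of inequalities linking $\|f\|_{L^{p^\star}}$ to the Gagliardo seminorm. I will first prove \eqref{eq_sobolev} under the additional assumption that $f$ is bounded (so that the dyadic level-set sequence needed by the lemmas vanishes eventually), and then remove boundedness by truncation. The $L^q$ embedding will follow from density and interpolation.

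So, for $f\in L^\infty(\R^n)$ with compact support I set $a_k:=|\{|f|>2^k\}|$, as in \eqref{def_ak}. This sequence is nonnegative, decreasing, bounded by the measure of the support of $f$, and equal to zero for $k\ge \log_2 \|f\|_{L^\infty}$, so hypothesis \eqref{ak} is satisfied. A standard layer-cake estimate, bounding $t^{p^\star-1}|\{|f|>t\}|$ on each dyadic shell $[2^k,2^{k+1})$ by $2^{p^\star-1}2^{kp^\star}a_k$, gives
$$
\|f\|^{p^\star}_{L^{p^\star}(\R^n)} \,\le\, C\sum_{k\in\Z} 2^{kp^\star}\,a_k.
$$
The key observation is that $\alpha:=p/p^\star=(n-sp)/n$ belongs to $(0,1)$ because $sp<n$, and for any nonnegative sequence $(x_k)$ the elementary inequality $\bigl(\sum x_k\bigr)^\alpha\le\sum x_k^\alpha$ holds. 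Applying this with $x_k=2^{kp^\star}a_k$ yields
$$
\|f\|^{p}_{L^{p^\star}(\R^n)} \,\le\, C\sum_{k\in\Z} 2^{kp}\,a_k^{(n-sp)/n}.
$$
Lemma \ref{OS1} with $T=2^p>1$ then bounds the right-hand side by $C\sum_{k\in\Z,\,a_k\ne 0}a_{k+1}\,a_k^{-sp/n}\,2^{kp}$, and Lemma \ref{OS2} identifies this last sum (up to a constant) with the Gagliardo seminorm of $f$, producing \eqref{eq_sobolev} in the bounded case.

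For a general measurable, compactly supported $f$ I apply the previous step to the truncation $f_N$ of Lemma \ref{lem_troncata}, which is bounded and still has compact support. Since $t\mapsto \max\{\min(t,N),-N\}$ is $1$-Lipschitz, $|f_N(x)-f_N(y)|\le|f(x)-f(y)|$ pointwise, so the Gagliardo seminorm of $f_N$ is controlled by that of $f$; at the same time $\|f_N\|_{L^{p^\star}}\to\|f\|_{L^{p^\star}}$ by Lemma \ref{lem_troncata} with $q=p^\star$, and passing to the limit gives \eqref{eq_sobolev}. The continuous embedding $W^{s,p}(\R^n)\hookrightarrow L^q(\R^n)$ for $q\in[p,p^\star]$ follows in the usual way: the case $q=p$ is built into the definition of the norm; the case $q=p^\star$ is obtained by approximating $u\in W^{s,p}(\R^n)$ with $C_0^\infty$-functions (Theorem \ref{density}), observing that the approximating sequence is Cauchy in $L^{p^\star}$ thanks to \eqref{eq_sobolev} and matching limits a.e.\ along a subsequence; the intermediate range is covered by the standard interpolation $\|u\|_{L^q}\le\|u\|_{L^p}^{\theta}\|u\|_{L^{p^\star}}^{1-\theta}$ with $1/q=\theta/p+(1-\theta)/p^\star$.

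The only genuinely delicate point is the passage from $\sum 2^{kp^\star}a_k$ to $\sum 2^{kp}a_k^{(n-sp)/n}$ via the subadditivity $(\sum x_k)^\alpha\le\sum x_k^\alpha$: this succeeds \emph{precisely} because $sp<n$ forces $\alpha=p/p^\star<1$, and it produces exactly the exponent $(n-sp)/n$ on $a_k$ required by Lemma \ref{OS1}. Everything else is a careful orchestration of the results already proved, with the truncation step needed only to match the boundedness hypothesis \eqref{ak}.
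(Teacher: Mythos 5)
Your proof is correct and follows essentially the same route as the paper: the layer-cake decomposition of $\|f\|_{L^{p^\star}}^{p^\star}$, the subadditivity inequality $\bigl(\sum x_k\bigr)^{p/p^\star} \le \sum x_k^{p/p^\star}$ (valid since $sp<n$ makes $p/p^\star<1$), Lemma~\ref{OS1} with $T=2^p$, Lemma~\ref{OS2}, and the truncation via Lemma~\ref{lem_troncata}. The only minor departures are stylistic: you control $[f_N]_{W^{s,p}}$ by the $1$-Lipschitz bound $|f_N(x)-f_N(y)|\le|f(x)-f(y)|$ where the paper invokes Dominated Convergence, and you make explicit the density argument needed to extend the $q=p^\star$ embedding from compactly supported to general $W^{s,p}(\R^n)$ functions, a point the paper leaves implicit.
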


\begin{proof}
First, we note that if the right hand side of~\eqref{eq_sobolev} is unbounded then the claim in the theorem plainly follows. Thus, we may suppose that $f$ is such that
\begin{equation}\label{eq_semi}
\int_{\R^n}\int_{\R^n}\frac{|f(x)-f(y)|^p}{|x-y|^{n+sp}}\,dx\,dy \, < \, +\infty.
\end{equation}
Moreover, we can suppose, without loss of generality, that
\begin{equation}\label{vega}
f \in L^{\infty}(\R^n).
\end{equation}
Indeed, if~\eqref{eq_semi} holds for bounded functions, then it holds also for the function $f_N$, obtained by any (possibly unbounded) $f$ by cutting at levels $-N$ and $+N$ (see~\eqref{def_troncata}). Therefore, by~Lemma~\ref{lem_troncata} and the fact that~\eqref{eq_semi} together with the Dominated Convergence Theorem imply
$$
\lim_{N\to+\infty}\int_{\R^n}\int_{\R^n}\frac{|f_N(x)-f_N(y)|^p}{|x-y|^{n+sp}}\,dx\,dy \, = \, \int_{\R^n}\int_{\R^n}\frac{|f(x)-f(y)|^p}{|x-y|^{n+sp}}\,dx\,dy,
$$
we obtain estimate \eqref{eq_sobolev} for the function $f$.
\vspace{2mm}

Now, take $a_k$ and $A_k$ defined by~\eqref{def_ak} and~\eqref{t6}, respectively. We have
\begin{eqnarray*}
\| f\|_{L^{p^{\star}} (\R^n)}^{p^{\star}}
\! & = & \! \sum_{k\in\Z}
\int_{A_k\setminus A_{k+1}}\! |f(x)|^{p^{\star}}\,dx
\ \leq \ \sum_{k\in\Z}
\int_{A_k\setminus A_{k+1}} (2^{k+1})^{p^{\star}} \,dx
\\
\\
& \le & \! \sum_{k\in\Z} 2^{(k+1){p^{\star}}} a_k.
\end{eqnarray*}
That is,
$$
\| f\|_{L^{p^{\star}} (\R^n)}^p
\, \le \, 2^p \left( \sum_{k\in\Z} 2^{k{p^{\star}}} a_k\right)^{p/{p^{\star}}}\!\!.
$$
Thus, since $p/{p^{\star}}\, =\, {(n-sp)/n} \, = \, 1-{sp}/{n} \, < \,1$,
\begin{equation}\label{eq_4star}
\dys \| f\|_{L^{p^{\star}} (\R^n)}^p
\, \le \,  2^p \sum_{k\in\Z} 2^{kp}
a_k^{(n-sp)/n}
\end{equation}
and, then, by choosing $T=2^p$, Lemma~\ref{OS1} yields
\begin{equation}\label{eq_5star}
\dys \|f\|^p_{L^{p^{\star}}(\R^n)} \, \le \, C\sum_{{k\in \Z}\atop{a_k\neq0}} 2^{kp}a_{k+1}a_k^{-\frac{sp}{n}},
\end{equation}
for a suitable constant $C$ depending on $n, p$ and $s$.

Finally, it suffices to apply~Lemma~\ref{OS2} and we obtain the desired result, up to relabeling the constant $C$ in~\eqref{eq_5star}.\\
Furthermore, the embedding for $q\in(p,p^{\star})$ follows from standard application of H\"{o}lder inequality.
\end{proof}

\begin{rem}
From Lemma~\ref{5yhh}, it follows that
\begin{equation}\label{CA3bis}\int_E \int_{\CC
E}\frac{dx\,dy}{|x-y|^{n+sp}}\ge
c(n,s)\,|E|^{(n-sp)/n}\end{equation}
for all
measurable sets~$E$ with finite measure.

On the other hand, we see that~\eqref{eq_sobolev} reduces to~\eqref{CA3bis} 
when~$f=\chi_E$,
so~\eqref{CA3bis} (and thus
Lemma~\ref{5yhh}) may be seen as a Sobolev-type inequality for sets.
\end{rem}

\vspace{2mm}

The above embedding does not generally hold for the space $W^{s,p}(\Omega)$ since it not always possible to extend a function $f\in W^{s,p}(\Omega)$ to a function $\tilde{f}\in W^{s,p}(\R^n)$. In order to be allowed to do that, we should require further regularity assumptions on $\Omega$ (see Section \ref{sec_estensione}).

\begin{thm}\label{thm_sobolev1}
Let $s\in (0,1)$ and $p\in[1,+\infty)$ such that $sp<n$. Let $\Omega \subseteq \R^n$ be an extension domain for $W^{s,p}$.
Then there exists a positive constant $C=C(n,p,s,\Omega)$ 
such that, for any $f\in W^{s,p}(\Omega)$, we have
\begin{equation}\label{eq_sobolev1}
\| f\|_{L^{q}(\Omega)}\le C
\|f\|_{W^{s,p}(\Omega)},
\end{equation}
for any $q \in [p,p^{\star}]$; i.e., the space $W^{s,p}(\Omega)$ is continuously embedded in~$L^q(\Omega)$ for any $q\in [p, p^\star]$.\vspace{1mm}

If, in addition, $\Omega$ is bounded, then the space~$W^{s,p}(\Omega)$ is continuously embedded in~$L^q(\Omega)$ for any $q\in [1, p^\star]$.
\end{thm}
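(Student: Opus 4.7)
My plan is to deduce the statement from the global Sobolev inequality on $\R^n$ (Theorem~\ref{thm_sobolev}) via the extension property, and then to recover all intermediate exponents by H\"older interpolation.

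\textbf{Step 1 (the endpoint $q=p^{\star}$).} Given $f\in W^{s,p}(\Omega)$, since $\Omega$ is an extension domain for $W^{s,p}$, there exists $\tilde f\in W^{s,p}(\R^n)$ with $\tilde f|_{\Omega}=f$ and
$\|\tilde f\|_{W^{s,p}(\R^n)}\le C_{\Omega}\|f\|_{W^{s,p}(\Omega)}$
for some constant $C_{\Omega}=C(n,p,s,\Omega)$. Theorem~\ref{thm_sobolev} already records the continuous embedding $W^{s,p}(\R^n)\hookrightarrow L^{p^{\star}}(\R^n)$ (the compact-support hypothesis there is removed by approximating $\tilde f$ with $C_0^{\infty}(\R^n)$ functions, which are dense in $W^{s,p}(\R^n)$ by Theorem~\ref{density}, and passing to the limit through Fatou's lemma). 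Hence
$$
\|f\|_{L^{p^{\star}}(\Omega)}\,\le\,\|\tilde f\|_{L^{p^{\star}}(\R^n)}\,\le\,C\,\|\tilde f\|_{W^{s,p}(\R^n)}\,\le\,C(n,p,s,\Omega)\,\|f\|_{W^{s,p}(\Omega)}.
$$

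\textbf{Step 2 (the endpoint $q=p$ and intermediate exponents).} The case $q=p$ is immediate from the very definition~\eqref{def2} of the $W^{s,p}$-norm, namely $\|f\|_{L^p(\Omega)}\le \|f\|_{W^{s,p}(\Omega)}$. For $q\in(p,p^{\star})$, write
$$
\frac{1}{q}\,=\,\frac{\theta}{p}\,+\,\frac{1-\theta}{p^{\star}},\qquad \theta\in(0,1),
$$
and apply the standard interpolation inequality together with Steps~1 and 2:
$$
\|f\|_{L^{q}(\Omega)}\,\le\,\|f\|_{L^{p}(\Omega)}^{\theta}\,\|f\|_{L^{p^{\star}}(\Omega)}^{1-\theta}\,\le\,\|f\|_{W^{s,p}(\Omega)}^{\theta}\,\bigl(C\|f\|_{W^{s,p}(\Omega)}\bigr)^{1-\theta}\,\le\,C'\,\|f\|_{W^{s,p}(\Omega)}.
$$
This gives~\eqref{eq_sobolev1} for every $q\in[p,p^{\star}]$.

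\textbf{Step 3 (bounded $\Omega$ and $q\in[1,p)$).} When $\Omega$ has finite Lebesgue measure, H\"older's inequality with exponent $p/q>1$ gives
$$
\|f\|_{L^{q}(\Omega)}\,\le\,|\Omega|^{\frac{1}{q}-\frac{1}{p}}\,\|f\|_{L^{p}(\Omega)}\,\le\,|\Omega|^{\frac{1}{q}-\frac{1}{p}}\,\|f\|_{W^{s,p}(\Omega)},
$$
so the embedding extends to every $q\in[1,p^{\star}]$, at the cost of a constant depending on $|\Omega|$ (hence on $\Omega$).

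The only genuinely delicate point is the reduction in Step~1: Theorem~\ref{thm_sobolev} is stated for compactly supported measurable functions, and one needs to verify that $W^{s,p}(\R^n)\hookrightarrow L^{p^{\star}}(\R^n)$ follows for arbitrary $\tilde f\in W^{s,p}(\R^n)$. This is the standard density argument outlined above, so no new ideas are required; the rest of the proof is a routine combination of extension and H\"older interpolation.
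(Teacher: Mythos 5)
Your proof is correct and follows essentially the same route as the paper: extend $f$ to $\tilde f\in W^{s,p}(\R^n)$, invoke Theorem~\ref{thm_sobolev} for the endpoint, interpolate by H\"older for intermediate $q$, and use H\"older again on the bounded domain for $q<p$. The one small refinement you add is spelling out the density argument needed to pass from the compactly-supported case of Theorem~\ref{thm_sobolev} to arbitrary $\tilde f\in W^{s,p}(\R^n)$, a step the paper absorbs into the statement of that theorem without comment.
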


\begin{proof}
Let $f\in W^{s,p}(\Omega)$. Since $\Omega \subseteq \R^n$ is an extension domain for $W^{s,p}$, then there exists a constant $C_1=C_1(n,p,s,\Omega)>0$ such that
\begin{equation}\label{eq_distro1}
\|\tilde{f}\|_{W^{s,p}(\R^n)} \leq C_1 \|f\|_{W^{s,p}(\Omega)},
\end{equation}
with $\tilde{f}$ such that $\tilde{f}(x)=f(x)$ for $x$ a.e. in $\Omega$.

On the other hand, by Theorem~\ref{thm_sobolev}, the space~$W^{s,p}(\R^n)$ is continuously embedded in~$L^{q}(\R^n)$ for any $q\in[p,p^{\star}]$; i.e., there exists a constant $C_2=C_2(n,p,s)>0$ such that
\begin{equation}\label{eq_distro2}
\|\tilde{f}\|_{L^q(\R^n)} \leq C_2 \|\tilde{f}\|_{W^{s,p}(\R^n)}.
\end{equation}
Combining~\eqref{eq_distro1} with~\eqref{eq_distro2}, we get
\begin{eqnarray*}
\|f\|_{L^q(\Omega)} \!\! & =  & \!\! \|\tilde{f}\|_{L^q(\Omega)} \, \leq  \, \|\tilde{f}\|_{L^q(\R^n)} \, \leq \, C_2\|\tilde{f}\|_{W^{s,p}(\R^n)} \\
&  \leq  & \!\! C_2 C_1 \|f\|_{W^{s,p}(\Omega)},
\end{eqnarray*}
that gives the inequality in~\eqref{eq_sobolev1}, by choosing~$C=C_2 C_1$.

\vspace{1mm}

In the case of $\Omega$ being  bounded, the embedding for $q\in[1,p)$ plainly follows from~\eqref{eq_sobolev1}, by using the H\"{o}lder inequality.
\end{proof}

\vspace{1mm}

\begin{rem} In the critical case
$q=p^\star$ the constant $C$ in~Theorem~\ref{thm_sobolev1} does not depend
on~$\Omega$: this is a consequence of~\eqref{eq_sobolev}
and of the extension property of~$\Omega$. \end{rem}

\vspace{3mm}

\subsection{The case $sp=n$}\label{sec_spn}
We note that when $sp\to n$ the critical exponent $p^\star$ goes to $\infty$ and so it is not surprising that, in this case, if $f$ is in $W^{s,p}$ then $f$ belongs to $L^q$ for any $q$, as stated in the following two theorems.
\begin{thm}\label{thm_spn}
Let $s\in (0,1)$ and $p\in[1,+\infty)$ such that $sp=n$.
Then there exists a positive constant $C=C(n,p,s)$ such that, for any measurable and compactly
supported function $f:\R^n\rightarrow\R$, we have
\begin{equation}\label{eq_spn} \| f\|_{L^{q} (\R^n)}\le C
\|f\|_{W^{s,p}(\R^n)},
\end{equation}
for any $q\in [p,\infty)$; i.e.,
the space $W^{s,p}(\R^n)$ is continuously embedded in $L^q(\R^n)$ for any $q\in [p, \infty)$.
\end{thm}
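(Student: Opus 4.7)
The plan is to reduce the claim to the subcritical case treated in Theorem~\ref{thm_sobolev1}, exploiting the fact that, at fixed $p$, the fractional critical exponent $p^\star(n,s') = np/(n-s'p)$ can be made arbitrarily large by letting $s'$ approach $s=n/p$ from below.

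Given $q\in[p,\infty)$, I would first pick an auxiliary fractional exponent $s'\in(0,s)$ with $s'p<n$ (automatic) and $p^\star(n,s')\ge q$. The latter condition amounts to
$$
s'\,\ge\,\frac{n(q-p)}{qp}.
$$
Since the hypotheses $s<1$ and $sp=n$ force $s=n/p<1$ (so in particular $p>n$), we have
$$
0\,\le\,\frac{n(q-p)}{qp}\,<\,\frac{n}{p}\,=\,s\,<\,1,
$$
so the interval $\bigl[n(q-p)/(qp),\,s\bigr)$ is nonempty and entirely contained in $(0,1)$; any $s'$ in that interval is admissible.

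With $s'$ so chosen, Proposition~\ref{enrico} on $\Omega=\R^n$ applied to the pair $s'\le s$ yields a constant $C_1=C_1(n,p,s')$ such that
$$
\|f\|_{W^{s',p}(\R^n)}\,\le\,C_1\,\|f\|_{W^{s,p}(\R^n)}.
$$
Since $\R^n$ is trivially an extension domain for $W^{s',p}$, $s'p<n$, and $q\in[p,\,p^\star(n,s')]$, Theorem~\ref{thm_sobolev1} at the fractional index $s'$ gives a constant $C_2=C_2(n,p,s',q)$ with
$$
\|f\|_{L^q(\R^n)}\,\le\,C_2\,\|f\|_{W^{s',p}(\R^n)}.
$$
Chaining these two estimates produces \eqref{eq_spn} with $C=C_1 C_2$, depending only on $n$, $p$, $s$ and the target exponent~$q$.

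I do not anticipate any substantial obstacle: the argument is a clean reduction of the critical regime $sp=n$ to the subcritical regime via a slightly smaller smoothness parameter. The only point warranting attention is verifying that the auxiliary $s'$ can genuinely be chosen in $(0,1)$, which is precisely where the constraint $s=n/p<1$ enters; note also that compact support of $f$ is not really exploited in this route, as everything is swept into $\|f\|_{W^{s,p}(\R^n)}$ through the $W^{s',p}$-intermediate space.
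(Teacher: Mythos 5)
Your argument is correct and is essentially the approach the paper intends: the paper states that Theorem~\ref{thm_spn} follows ``by simply combining Proposition~\ref{enrico} with Theorem~\ref{thm_sobolev},'' which is exactly your reduction to a subcritical exponent $s'<s$ followed by the subcritical Sobolev embedding. The only cosmetic differences are that you invoke Theorem~\ref{thm_sobolev1} (the extension-domain version) where the paper points to Theorem~\ref{thm_sobolev} directly --- interchangeable here since $\Omega=\R^n$ --- and that for $q=p$ the interval you write is $[0,s)$ rather than $(0,s)$, which is harmless since any $s'\in(0,s)$ works (indeed the case $q=p$ is trivial); note also that the resulting constant genuinely depends on $q$ through the choice of $s'$, consistent with the fact that $s'\to s$ as $q\to\infty$.
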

\vspace{2mm}

\begin{thm}\label{thm_spn1}
Let $s\in (0,1)$ and $p\in[1,+\infty)$ such that $sp=n$. Let $\Omega \subseteq \R^n$ be an extension domain for $W^{s,p}$.
Then there exists a positive constant $C=C(n,p,s,\Omega)$ such that, for any $f\in W^{s,p}(\Omega)$, we have
\begin{equation}\label{eq_spn1} \| f\|_{L^{q} (\Omega)}\le C
\|f\|_{W^{s,p}(\Omega)},
\end{equation}
for any $q\in [p,\infty)$; i.e., the space $W^{s,p}(\Omega)$ is continuously embedded in~$L^q(\Omega)$ for any $q\in[p,\infty)$.\vspace{1mm}

If, in addition, $\Omega$ is bounded, then the space~$W^{s,p}(\Omega)$ is continuously embedded in~$L^q(\Omega)$ for any $q\in [1, \infty)$.
\end{thm}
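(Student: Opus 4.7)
The plan is to mimic exactly the argument of Theorem~\ref{thm_sobolev1}: use the hypothesis that $\Omega$ is an extension domain to transplant the question to the whole of $\R^n$, and then quote Theorem~\ref{thm_spn} as a black box. Since Theorem~\ref{thm_spn} already handles the $\R^n$ case in the critical regime $sp=n$ for every $q\in[p,\infty)$, there is essentially nothing new to do beyond gluing two inequalities together.

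More precisely, given $f\in W^{s,p}(\Omega)$, I would first invoke the extension property to produce $\tilde f\in W^{s,p}(\R^n)$ with $\tilde f|_\Omega = f$ a.e.\ and
$$
\|\tilde f\|_{W^{s,p}(\R^n)}\,\le\, C_1\|f\|_{W^{s,p}(\Omega)},
$$
for some $C_1=C_1(n,p,s,\Omega)>0$. Then I would apply Theorem~\ref{thm_spn} to $\tilde f$, obtaining, for any fixed $q\in[p,\infty)$, a constant $C_2=C_2(n,p,s)>0$ with
$$
\|\tilde f\|_{L^q(\R^n)}\,\le\, C_2\|\tilde f\|_{W^{s,p}(\R^n)}.
$$
Since trivially $\|f\|_{L^q(\Omega)}=\|\tilde f\|_{L^q(\Omega)}\le\|\tilde f\|_{L^q(\R^n)}$, chaining the two previous estimates yields \eqref{eq_spn1} with $C=C_1C_2$, which depends on $n$, $p$, $s$, $\Omega$ and (through $C_2$) on the choice of $q$.

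For the last statement, when $\Omega$ is additionally bounded and $q\in[1,p)$, I would simply observe that by H\"older's inequality
$$
\|f\|_{L^q(\Omega)}\,\le\, |\Omega|^{\frac{1}{q}-\frac{1}{p}}\,\|f\|_{L^p(\Omega)}\,\le\, |\Omega|^{\frac{1}{q}-\frac{1}{p}}\,\|f\|_{W^{s,p}(\Omega)},
$$
so that the embedding extends from $[p,\infty)$ to $[1,\infty)$ with a constant that now also depends on $|\Omega|$. There is no genuine obstacle in this argument: all the analytic work has already been carried out in Theorem~\ref{thm_spn} (whose proof, in turn, reduces to Theorem~\ref{thm_sobolev} by picking an auxiliary $\tilde s<s$ with $\tilde s p<n$ and using Proposition~\ref{enrico} to compare $W^{s,p}$ with $W^{\tilde s,p}$). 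The only point to watch is that the constant $C$ in \eqref{eq_spn1} is not uniform in $q$ as $q\to\infty$, but this is consistent with the statement, which fixes $q$ beforehand.
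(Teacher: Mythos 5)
Your proof is correct and delivers exactly what the theorem claims. The paper disposes of this statement in one line -- ``The proofs can be obtained by simply combining Proposition~\ref{enrico} with Theorem~\ref{thm_sobolev} and Theorem~\ref{thm_sobolev1}, respectively'' -- which, read literally for the domain case, suggests lowering the smoothness \emph{inside} $\Omega$ (via Proposition~\ref{enrico}, to some $\tilde s<s$ with $\tilde s p<n$ and $q\le np/(n-\tilde s p)$) and then invoking Theorem~\ref{thm_sobolev1} at the level of $W^{\tilde s,p}(\Omega)$. Your ordering is different and in fact a bit cleaner: you extend first in $W^{s,p}$, using precisely the extension-domain hypothesis as stated, then apply Theorem~\ref{thm_spn} on $\R^n$ (where the smoothness-lowering happens harmlessly, $\R^n$ being trivially its own extension domain), and finally restrict. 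The advantage is that your route only ever uses the hypothesis that $\Omega$ is an extension domain for $W^{s,p}$, whereas the paper's literal reading would quietly need $\Omega$ to be an extension domain for $W^{\tilde s,p}$ as well -- true for Lipschitz domains by Theorem~\ref{thm_estensione}, but not part of the stated assumptions. Your treatment of the bounded case via H\"older, and your remark that $C$ necessarily depends on $q$ and blows up as $q\to\infty$, are both accurate and match the intended scope of the statement.
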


The proofs can be obtained by simply combining Proposition~\ref{enrico} with Theorem~\ref{thm_sobolev} and Theorem~\ref{thm_sobolev1}, respectively.

\vspace{3mm}

\section{Compact embeddings}\label{sec_compactness}
In this section, we state and prove some compactness results involving the fractional spaces~$W^{s,p}(\Omega)$ in bounded domains. The main proof is a modification of the one of the classical Riesz-Frechet-Kolmogorov Theorem (see~\cite{Kol31, Rie33}) and, again, it is self-contained and it does not require to use Besov  or other interpolation spaces, nor Fourier transform and semigroup flows (see \cite[Theorem 1.5]{CT04b}). We refer to~\cite[Lemma 6.11]{PSV11} for the case $p=q=2$.
\vspace{1mm}

\begin{thm}\label{thm_comp}
Let $s\in (0,1)$, $p\in [1,+\infty)$, $q\in[1, p]$, $\Omega\subset\R^n$
be a bounded extension domain for $W^{s,p}$
and $\TT$ be a bounded subset of 
$L^p(\Omega)$.
Suppose that
$$ \sup_{f\in \TT} 
\int_{\Omega}\int_{\Omega}
\frac{|f(x)-f(y)|^p}{|x-y|^{n+sp}}\,dx\,dy
\,<\,+\infty.$$
Then $\TT$ is pre-compact in $L^q(\Omega)$.
\end{thm}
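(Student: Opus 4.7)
The plan is to adapt the classical Riesz--Fr\'echet--Kolmogorov compactness argument, with the Gagliardo seminorm providing exactly the quantitative control on mollifications that replaces the gradient bound of the integer-Sobolev case. First, I would invoke the extension hypothesis: to each $f\in\TT$ I associate an extension $\tilde f\in W^{s,p}(\R^n)$ with $\|\tilde f\|_{W^{s,p}(\R^n)}\le C\|f\|_{W^{s,p}(\Omega)}$, so that the family $\{\tilde f\}$ is uniformly bounded in $L^p(\R^n)$ and has uniformly bounded Gagliardo seminorm on $\R^n$. Since $\Omega$ is bounded and $q\le p$, H\"older's inequality already gives $\TT$ bounded in $L^q(\Omega)$, so only the equicontinuity-in-translation aspect remains to be extracted.

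Next, fix a standard mollifier $\rho\in C_0^\infty(B_1)$ with $\int\rho=1$, and set $\rho_\eps(y)=\eps^{-n}\rho(y/\eps)$. Consider the family $\TT_\eps:=\{\tilde f\ast\rho_\eps|_{\overline\Omega}:f\in\TT\}\subset C(\overline\Omega)$. The bound $\|\tilde f\ast\rho_\eps\|_{L^\infty(\R^n)}\le\|\rho_\eps\|_{L^{p'}}\|\tilde f\|_{L^p(\R^n)}$ shows that $\TT_\eps$ is uniformly bounded, while the uniform continuity of the translates $x\mapsto\rho_\eps(x-\cdot)$ in $L^{p'}(\R^n)$ (with modulus of continuity independent of $f$) gives equicontinuity on the compact set $\overline\Omega$. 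Arzel\`a--Ascoli therefore makes $\TT_\eps$ precompact in $C(\overline\Omega)$, and hence in $L^q(\Omega)$ for every $q\in[1,\infty)$.

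The heart of the proof is the uniform mollification estimate
$$
\|\tilde f\ast\rho_\eps-\tilde f\|_{L^p(\R^n)}^{p}\,\le\, C\,\eps^{sp}\,[\tilde f]_{W^{s,p}(\R^n)}^{p},
$$
which I would derive as follows. By Jensen's inequality applied to the probability measure $\rho_\eps\,dy$,
$$
|\tilde f\ast\rho_\eps(x)-\tilde f(x)|^p\,\le\, \int_{B_\eps}\rho_\eps(y)\,|\tilde f(x-y)-\tilde f(x)|^p\,dy.
$$
Writing $\rho_\eps(y)=\bigl(\rho_\eps(y)\,|y|^{n+sp}\bigr)\cdot|y|^{-(n+sp)}$ and using the elementary bounds $\rho_\eps(y)\le\|\rho\|_{L^\infty}\eps^{-n}$ and $|y|^{n+sp}\le\eps^{n+sp}$ on $B_\eps$, one extracts the factor $\eps^{sp}$; integrating in $x\in\R^n$, Fubini and the affine change of variable $z=x-y$ recover the full Gagliardo double integral on the right. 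For $q\le p$ and $\Omega$ bounded, H\"older then upgrades this to
$$
\|\tilde f\ast\rho_\eps-\tilde f\|_{L^q(\Omega)}\,\le\, C\,|\Omega|^{\frac{1}{q}-\frac{1}{p}}\,\eps^s\,\|f\|_{W^{s,p}(\Omega)},
$$
which tends to $0$ as $\eps\to0^+$ uniformly in $f\in\TT$.

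Finally, I combine these two ingredients by the classical $\eta/3$ argument: given $\eta>0$, first choose $\eps>0$ so small that the mollification error is less than $\eta/3$ for every $f\in\TT$; then cover the precompact set $\TT_\eps$ by finitely many $L^q(\Omega)$-balls of radius $\eta/3$; the centers of these balls form a finite $\eta$-net for $\TT$ in $L^q(\Omega)$, proving total boundedness and hence precompactness. The main technical obstacle is the mollification estimate above; once that is in hand the rest is bookkeeping. An alternative route through the Riesz--Fr\'echet--Kolmogorov characterization via translations $\tau_h\tilde f$ would demand a pointwise-in-$h$ bound $\|\tau_h\tilde f-\tilde f\|_{L^p}\le C|h|^{s}[\tilde f]_{W^{s,p}}$, which fails in $W^{s,p}$ for $s<1$ (only an averaged-in-$h$ version holds), so the mollification formulation seems to be the most natural tool here.
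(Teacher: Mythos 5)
Your argument is correct, and it differs in a genuine (though natural) way from the proof in the paper. Both proofs are variations on the Riesz--Fr\'echet--Kolmogorov theme, and both hinge on a quantitative estimate for the distance between $\tilde f$ and a local average of $\tilde f$ at a small scale, controlled by the Gagliardo seminorm. But the discretizing device is different. You convolve $\tilde f$ with a mollifier $\rho_\eps$, prove the clean bound $\|\tilde f*\rho_\eps-\tilde f\|_{L^p(\R^n)}^p\le C\eps^{sp}[\tilde f]_{W^{s,p}(\R^n)}^p$ via Jensen and a rescaling, and obtain precompactness of the smoothed family $\{\tilde f*\rho_\eps\}$ from Arzel\`a--Ascoli; the $\eta/3$ argument then finishes. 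The paper instead tiles a cube $Q\supset\Omega$ by small cubes $Q_1,\dots,Q_N$ of side $\rho$, replaces $\tilde f$ by its piecewise-constant averages $P(f)$, observes that the corresponding map $R(f)\in\R^N$ lands in a bounded (hence totally bounded) finite-dimensional set, and proves the analogous $\rho^{sq}$ error bound for $\|f-P(f)\|_{L^q(\Omega)}$ by H\"older and Jensen on each cube. What your route buys is modularity and a conceptually transparent replacement for the gradient bound $\|\tilde f*\rho_\eps-\tilde f\|_{L^p}\le\eps\|\nabla\tilde f\|_{L^p}$ of the $s=1$ case; what the paper's route buys is that it is fully self-contained, avoiding even Arzel\`a--Ascoli and any discussion of $C(\overline\Omega)$, which fits its stated aim of giving elementary, from-scratch proofs. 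One small caveat on your closing remark: the Riesz--Fr\'echet--Kolmogorov criterion does not actually require the pointwise-in-$h$ modulus $\|\tau_h\tilde f-\tilde f\|_{L^p}\le C|h|^s$; it only needs $\sup_{f\in\TT}\|\tau_h\tilde f-\tilde f\|_{L^p}\to0$, which does follow from the uniform Gagliardo bound by a short averaging argument (essentially the same computation you did for the mollifier, Fubini'd the other way). So the translation-based route is not blocked; it is simply less immediate than the mollifier one you chose.
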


\begin{proof} 
We want to show that $\TT$ is totally bounded in $L^q(\Omega)$,
i.e., for any $\eps \in (0,1)$ there exist $\beta_1,\dots,\beta_M \in L^q(\Omega)$
such that for any $f\in \TT$ there exists $j\in \{ 1,\dots, M\}$ such that
\begin{equation}\label{asd}
\| f-\beta_j\|_{L^q(\Omega)} \le \eps.
\end{equation}

Since $\Omega$ is an extension domain, there exists a function $\tilde{f}$ in $W^{s,p}(\R^n)$ such that $\|\tilde{f}\|_{W^{s,p}(\R^n)}\leq C \|f\|_{W^{s,p}(\Omega)}$. 
 Thus, for any cube $Q$ containing $\Omega$, we have
$$\|\tilde{f}\|_{W^{s,p}(Q)}\, \leq\,  \|\tilde{f}\|_{W^{s,p}(\R^n)} \, \leq \,  C\|f\|_{W^{s,p}(\Omega)}.$$

Observe that, since $Q$ is a bounded open set, $\tilde{f}$ belongs also to $L^q(Q)$ for any $q\in[1,p]$.
Now, for any $\eps\in (0,1)$,  we let 
\begin{eqnarray*}&& C_o:= 1+\sup_{f\in \TT}\|\tilde{f}\|_{L^q(Q)}+\sup_{f\in\TT}
\int_{Q} \int_{Q}
\frac{|\tilde{f}(x)-\tilde{f}(y)|^p}{|x-y|^{n+sp}}\,dx\,dy,\\
&&\rho=\rho_{\eps}:= \left(
\frac{\eps}{2C_o^{\frac{1}{q}}\,n^{\frac{n+sp}{2p}}}
\right)^{\!\frac{1}{s}}
\ {\mbox{
and
}}\ \ \
\eta=\eta_{\eps}\!:=\frac{\eps\,\rho^{\frac{n}{q}}}2,\end{eqnarray*}
and we take a collection of disjoints 
cubes $Q_1,\dots,Q_N$
of side 
$\rho$
such that\footnote{To be precise,
for this one needs to take $\varepsilon\in(0,1)$
arbitrarily small and such that the ration between
the side of~$Q$ and~$\rho_\varepsilon$ is integer.}
$$
\Omega\subseteq Q	=\bigcup_{j=1}^N Q_j.
$$
For any $x\in \Omega$, 
we define
\begin{equation}\label{def_jx}
j(x) \ \text{as the unique integer in}  \ \{ 1,\dots,N\} \ \text{for which}
 \ x\in Q_{j(x)}.
\end{equation}
Also, for any $f\in\TT$, let
$$ P(f) (x):=
\frac{1}{|Q_{j(x)}|} 
\int_{Q_{j(x)}}
\tilde{f}(y)\,dy.$$
Notice that
$$
P(f+g)=P(f)+P(g) \ \ \ \text{for any} \ f, g \in \TT
$$
and that $P(f)$ is constant, say equal to $q_j(f)$, in any $Q_j$,
for $j\in\{ 1,\dots,N\}$. Therefore, we can define
$$ R(f) :=\rho^{n/q} \big( q_1(f),\dots,q_N(f)\big)\in \R^N$$
and consider the spatial $q$-norm in $\R^N$ as
$$
\|v\|_q:=\left( \sum_{j=1}^N|v_j|^q\right)^{\!\frac{1}{q}}\!\!, \ \ \text{for any} \ v \in \R^N.
$$
We observe that $R({f+g})=R(f)+R(g)$. Moreover, 
\begin{eqnarray}\label{509}
\|P(f)\|^q_{L^q(\Omega)} \! &  = &\! \sum_{j=1}^N\int_{Q_j\cap\Omega}|P(f)(x)|^q\, dx \nonumber \\
\nonumber \\
& \leq & \! \rho^n\sum_{j=1}^N |q_j(f)|^q \ = \ \|R(f)\|_q^q \ \leq \ \frac{\|R(f)\|_q^q}{\rho^n}.
\end{eqnarray}
Also, by H\"older
inequality,
\begin{eqnarray*}
\|R(f)\|_q^q \! & = & \! \sum_{j=1}^N\rho^n|q_j(f)|^q
\, = \, \frac{1}{\rho^{n(q-1)}}
\sum_{j=1}^N \left|
\int_{Q_{j}} \tilde{f}(y)\,dy
\right|^q  \\
& \leq & \!
\sum_{j=1}^N 
\int_{Q_{j}} |\tilde{f}(y)|^q\,dy
\, = \,
\int_{Q} |\tilde{f}(y)|^q \, dy \, =\, \|\tilde{f}\|_{L^q(Q)}^q.
\end{eqnarray*}
In particular,
$$
\sup_{f\in\TT}\|R(f)\|_q^q
\le C_o,
$$
that is, the set $R(\TT)$ is bounded in $\R^N$ (with respect to the $q$-norm of $\R^N$ as well as to any equivalent norm of $\R^N$)
and so, since it is finite dimensional, it is totally bounded.
Therefore, there exist $b_1,\dots,b_M\in \R^N$ such that
\begin{equation}\label{eq_22bis}
R(\TT)\subseteq \bigcup_{i=1}^M B_\eta (b_i),
\end{equation}
where the balls $B_\eta$ are taken in the $q$-norm of $\R^N$.
\vspace{1mm}

For any $i\in \{1,\dots, M\}$, we write the coordinates of $b_i$ as $$b_i=(b_{i,1},\dots, b_{i,N})\in \R^N.$$
For any $x\in \Omega$, we set
$$
\beta_i(x):= \rho^{-n/q}\,b_{i,j(x)},
$$
where $j(x)$ is as in~\eqref{def_jx}.

Notice that $\beta_i$ is constant on $Q_j$, i.e. if $x\in Q_j$ then 
\begin{equation}\label{eq_2t2}
P(\beta_i)(x)=\rho^{-\frac{n}{q}}b_{i,j}=\beta_{i}(x)
\end{equation}
and so $q_j(\beta_i)=\rho^{-\frac{n}{q}}b_{i,j}$; thus 
\begin{equation}\label{eq_25bis}
 R(\beta_i)=b_i.
 \end{equation}

Furthermore, for any $f\in \TT$
\begin{eqnarray}\label{comp}
\| f-P(f)\|_{L^q(\Omega)}^q \! & = & \!\sum_{j=1}^N
\int_{Q_j\cap \Omega} |f(x)-P(f)(x)|^q\,dx \nonumber
\\
\nonumber
\\ & = & \!  \sum_{j=1}^N\int_{Q_j\cap \Omega} \left| f(x)-\frac{1}{|Q_j|} \int_{Q_j} 
\tilde{f}(y)\,dy\right|^q\!dx \nonumber
\\
\nonumber
\\ & = & \! \sum_{j=1}^N
\int_{Q_j\cap \Omega} \frac{1}{|Q_j|^q}\left|\int_{Q_j} f(x)-
\tilde{f}(y)\,dy\right|^q\!dx \nonumber
\\ & \le & \! \frac{1}{\rho^{nq}}\,\sum_{j=1}^N
\int_{Q_j\cap \Omega} \left[\int_{Q_j} |f(x)-
\tilde{f}(y)|\,dy\right]^q\!dx.
\end{eqnarray}

Now for any fixed $j\in{1,\cdots,N}$, by H\"{o}lder inequality with $p$ and $p/(p-1)$ we get
\begin{eqnarray}\label{comp1}
&& \frac{1}{\rho^{nq}}\, \left[\int_{Q_j} |f(x)-\tilde{f}(y)|\,dy\right]^q \nonumber \\
&&  \qquad \qquad \qquad \qquad \, \le \,\frac{1}{\rho^{nq}} \, |Q_j|^{\frac{q(p-1)}{p}} \left[\int_{Q_j} \big|f(x)-
\tilde{f}(y)\big|^p\,dy \right]^{\frac{q}{p}}  \nonumber
\\
\nonumber
\\
&&  \qquad \qquad \qquad \qquad = \, \frac{1}{\rho^{nq/p}} \left[\int_{Q_j} \big|f(x)-
\tilde{f}(y)\big|^p\,dy \right]^{\frac{q}{p}} \nonumber
\\ 
\nonumber
\\
&&  \qquad \qquad \qquad \qquad \le \, \frac{1}{\rho^{nq/p}}\, n^{\left(\frac{n+sp}{2}\right)\frac{q}{p}}\rho^{\frac{q}{p}(n+sp)}\left[\int_{Q_j} \frac{|f(x)-
\tilde{f}(y)|^p}{|x-y|^{n+sp}}\,dy\right]^{\frac{q}{p}}  \nonumber
\\
\nonumber
\\
&&  \qquad \qquad \qquad \qquad \le \, n^{\left(\frac{n+sp}{2}\right)\frac{q}{p}}\rho^{sq} \left[\int_{Q} \frac{|f(x)-
\tilde{f}(y)|^p}{|x-y|^{n+sp}}\,dy\right]^{\frac{q}{p}}.
\end{eqnarray}
Hence, combining \eqref{comp} with \eqref{comp1}, we obtain that
\begin{eqnarray}\label{comp2}
\|f-P(f)\|^q_{L^q(\Omega)} & \le & \! n^{\left(\frac{n+sp}{2}\right)\frac{q}{p}}\rho^{sq}
\int_{Q} \left[\int_{Q} \frac{|\tilde{f}(x)-
\tilde{f}(y)|^p}{|x-y|^{n+sp}}\,dy\right]^{\frac{q}{p}}\!dx  \nonumber
\\
\nonumber
\\
& \le & \! n^{\left(\frac{n+sp}{2}\right)\frac{q}{p}}\rho^{sq}
\left[ \int_{Q} \int_{Q} \frac{|\tilde{f}(x)-
\tilde{f}(y)|^p}{|x-y|^{n+sp}}\,dy\, dx \,\right]^{\frac{q}{p}}
\\
\nonumber
\\ & \le & \! C_o\;n^{\left(\frac{n+sp}{2}\right)\frac{q}{p}}\,\rho^{sq} \, = \, \frac{\eps^q}{2^q} \nonumber. 
\end{eqnarray}
where \eqref{comp2} follows from Jensen inequality since $t\mapsto |t|^{q/p}$ is a concave function for any fixed $p$ and $q$ such that $q/p \leq 1$.

\vspace{1mm}

Consequently, for any $j\in\{1, ..., M\}$, recalling~\eqref{509} and~\eqref{eq_2t2}
\begin{eqnarray}\label{905}
\|f-\beta_j\|_{L^q(\Omega)} \! & \le & \!
\|f-P(f)\|_{L^q(\Omega)}+\|P(\beta_j)-\beta_j\|_{L^q(\Omega)}
+\|P(f-\beta_j)\|_{L^q(\Omega)} \nonumber 
\\
& \leq & \!  \frac{\eps}{2}+\frac{\|R(f)-R(\beta_j)\|_q}{\rho^{n/q}}.
\end{eqnarray}
\vspace{1mm}

Now, given any~$f\in\TT$, we recall~\eqref{eq_22bis} and~\eqref{eq_25bis} and we take~$j\in\{1,\dots,M\}$ such that~$R(f)\in
B_\eta(b_j)$.
Then,~\eqref{eq_2t2} and~\eqref{905} give that
\begin{eqnarray}\label{ele2}
\| f-\beta_j\|_{L^q(\Omega)} \ \le
\ \frac{\eps}{2}+\frac{\|R(f)-b_j\|_q}{\rho^{n/q}}\ \le \
\frac\eps2+\frac{\eta}{\rho^{n/q}}\ = \ \eps.
\end{eqnarray}
This proves \eqref{asd}, as desired. 
\end{proof}

\vspace{1mm}

\begin{corollary}\label{cor1}
Let  $s\in (0,1)$ and $p\in [1,+\infty)$ such that $sp<n$. Let $q\in[1, p^{\star})$, $\Omega\subseteq\R^n$ be a bounded extension domain for~$W^{s,p}$
and $\TT$ be a bounded subset~of~
$L^p(\Omega)$.
Suppose that
$$ \sup_{f\in \TT} 
\int_{\Omega}\int_{\Omega}
\frac{|f(x)-f(y)|^p}{|x-y|^{n+sp}}\,dx\,dy
\,<\,+\infty.$$
Then $\TT$ is pre-compact in $L^q(\Omega)$. 
\end{corollary}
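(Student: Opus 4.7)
The plan is to reduce to Theorem~\ref{thm_comp} by splitting the range of $q$ into the two cases $q\in[1,p]$ and $q\in(p,p^\star)$, and bridging the second case by an elementary interpolation argument.

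First, when $q\in[1,p]$ the statement is an immediate consequence of Theorem~\ref{thm_comp}, so from now on I would assume $q\in(p,p^\star)$. The key observation is that, since $\Omega$ is an extension domain for $W^{s,p}$ and $\TT$ is bounded in $L^p(\Omega)$ with uniformly bounded Gagliardo seminorm, the family $\TT$ is bounded in $W^{s,p}(\Omega)$. Hence, by the Sobolev embedding of Theorem~\ref{thm_sobolev1}, there is a constant $M>0$ with
$$
\sup_{f\in\TT}\|f\|_{L^{p^\star}(\Omega)}\,\le\, M.
$$
This uniform $L^{p^\star}$-bound is the input that lets us exchange $q$ with $p$ via interpolation.

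Next, I would pick $\theta\in(0,1)$ defined by
$$
\frac{1}{q}\,=\,\frac{\theta}{p}+\frac{1-\theta}{p^\star},
$$
which is possible precisely because $p<q<p^\star$. Then for every pair $f,g\in\TT$, H\"older's inequality in the form of the standard $L^p$--$L^{p^\star}$ interpolation gives
$$
\|f-g\|_{L^q(\Omega)}\,\le\,\|f-g\|_{L^p(\Omega)}^{\theta}\,\|f-g\|_{L^{p^\star}(\Omega)}^{1-\theta}
\,\le\,(2M)^{1-\theta}\,\|f-g\|_{L^p(\Omega)}^{\theta}.
$$
This inequality is the workhorse: compactness in $L^p$ gets transported to compactness in $L^q$ with only a quantitative loss.

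Finally, given any sequence $\{f_k\}\subset\TT$, Theorem~\ref{thm_comp} (applied with exponent $p$, which is admissible since $p\in[1,p]$) yields a subsequence $\{f_{k_j}\}$ that is Cauchy in $L^p(\Omega)$. By the displayed interpolation inequality this subsequence is also Cauchy in $L^q(\Omega)$, hence convergent there. This shows that $\TT$ is pre-compact in $L^q(\Omega)$. The argument has essentially no obstacles: the only nontrivial ingredients are the $L^{p^\star}$-boundedness (which is Theorem~\ref{thm_sobolev1} and requires the extension-domain hypothesis) and the interpolation inequality, both of which are standard once Theorem~\ref{thm_comp} is in hand.
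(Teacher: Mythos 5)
Your argument is correct, and it rests on the same core idea as the paper's own proof: split off $q\in[1,p]$ (which is Theorem~\ref{thm_comp} directly), and for $q\in(p,p^\star)$ interpolate between $L^p$ and $L^{p^\star}$, using Theorem~\ref{thm_sobolev1} to get the $L^{p^\star}$ bound. The difference is in how the interpolation is deployed. The paper applies it to $\|f-\beta_j\|_{L^q(\Omega)}$, where the $\beta_j$ are the finitely many piecewise-constant approximants from the total-boundedness argument in Theorem~\ref{thm_comp}, and then estimates $\|f-\beta_j\|_{L^{p^\star}(\Omega)}\le C\|f-\beta_j\|_{W^{s,p}(\Omega)}$. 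You instead first record the uniform bound $\sup_{f\in\TT}\|f\|_{L^{p^\star}(\Omega)}\le M$, apply the interpolation inequality to differences $f-g$ with both $f,g\in\TT$, and then convert $L^p$-pre-compactness into $L^q$-pre-compactness via Cauchy subsequences. Your route is arguably cleaner: it only needs the Sobolev embedding for elements of $\TT$, and it sidesteps the (implicit, slightly delicate) point in the paper's version that the piecewise-constant functions $\beta_j$ must be controlled in $W^{s,p}(\Omega)$, which is not obvious when $sp\ge1$. Both arguments prove the same statement; yours does so with fewer moving parts.
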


\begin{proof}
First, note that for $1 \leq q \leq p$ the compactness follows from~Theorem~\ref{thm_comp}.

For any $q\in (p, p^{\star})$, we may take $\theta=\theta(p,p^{\star},q) \in (0,1)$ such that ${1}/{q}={\theta}/{p}+{1-\theta}/{p^{\star}}$, thus for any $f\in \TT$ and $\beta_j$ with $j\in\{1,...,N\}$ as in the theorem above, using H\"{o}lder inequality with $p/(\theta q)$ and $p^{\star}/((1-\theta)q)$, we get
\begin{eqnarray*}
\|f-\beta_j\|_{L^q(\Omega)} &=& \left(\int_{\Omega} |f-\beta_j|^{q\theta}\, |f-\beta_j|^{q(1-\theta)} \, dx  \right)^{1/q} \\
\\
& \leq & \left(  \int_{\Omega} |f-\beta_j|^{p} \, dx  \right)^{\theta/p} \,  \left(  \int_{\Omega} |f-\beta_j|^{p^{\star}} \, dx  \right)^{(1-\theta)/p^{\star}} \\
\\
&=& \|f-\beta_j\|^{1-\theta}_{L^{p{\star}}(\Omega)}\, \|f-\beta_j\|^{\theta}_{L^p(\Omega)} \,\\
\\
& \leq & C \|f-\beta_j\|^{1-\theta}_{W^{s,p}(\Omega)} \, \|f-\beta_j\|^{\theta}_{L^p(\Omega)} \,\leq\, \tilde{C} \eps^{\theta},
\end{eqnarray*}
where the last inequalities comes directly from~\eqref{ele2} and the continuos embedding (see Theorem~\ref{thm_sobolev1}).
\end{proof}
\begin{rem}
As well known in the classical case $s=1$ (and, more generally, when $s$ is an integer), also in the fractional case the lack of compactness for the critical embedding ($q=p^{\star}$) is not surprising, because of translation and dilation invariance (see~\cite{PP10} for various results in this direction, for any $0<s<n/2$). 
\end{rem}

\vspace{2mm}

Notice that the regularity assumption on~$\Omega$ in Theorem~\ref{thm_comp} and Corollary~\ref{cor1} cannot be dropped (see Example~\ref{exe_ovidiu} in Section~\ref{sec_example}).

\vspace{3mm}

\section{H\"older regularity}\label{sec_holder}

In this section we will show certain regularity properties for functions in $W^{s,p}(\Omega)$ when $sp>n$ and $\Omega$ is an extension domain for $W^{s,p}$ with no external cusps. For instance, one may take $\Omega$ any Lipschitz domain (recall Theorem~\ref{thm_estensione}).

\vspace{1mm}
The main result is stated in the forthcoming Theorem~\ref{thm_holder}. First, we need a simple  technical lemma, whose proof can be found in \cite{Giu94} (for instance).

\vspace{2mm}
\begin{lemma}\label{giusti}{\rm (\cite[Lemma 2.2]{Giu94})}.
Let $p\in [1, +\infty) $ and $sp\in(n,n+p]$. Let $\Omega\subset\R^n$ be a domain with no external cusps and $f$ be a function in $W^{s,p}(\Omega)$. Then, for any $x_0 \in \Omega$ and $R,R'$, with $0< R'< R< \rm{diam}(\Omega)$, we have
\begin{equation}\label{giustieq}
|\langle f \rangle_{B_R(x_0)\cap \Omega}- \langle f\rangle_{B_{R'}(x_0)\cap \Omega}|\leq \,c\, [f]_{p,sp} \,|B_R(x_0)\cap\Omega|^{(sp-n)/np}
\end{equation}
where 
$$
[f]_{p,sp} := \left(\sup_{x_0\in\Omega \, \rho>0}  \, \rho^{-sp} \int_{B_\rho(x_0)\cap \Omega} |f(x)-\langle f\rangle_{B_\rho(x_0)\cap \Omega}|^p \,dx\right)^{\!\frac{1}{p}}
$$
and
$$
\langle f\rangle_{B_\rho(x_0)\cap \Omega}: = \frac{1}{|B_\rho(x_0)\cap \Omega|} \int_{B_\rho(x_0)\cap \Omega} f(x) dx.
$$
\end{lemma}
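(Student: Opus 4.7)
The plan is to compare the two averages via a dyadic telescoping chain. Set $R_k := R/2^k$ for $k=0,1,2,\ldots$ and let $k_0\in\N$ be the largest integer with $R_{k_0}\geq R'$, so that $R'\leq R_{k_0}\leq 2R'$. The triangle inequality then reduces the problem to
$$
\big|\langle f\rangle_{B_R(x_0)\cap\Omega} - \langle f\rangle_{B_{R'}(x_0)\cap\Omega}\big|\leq \sum_{k=0}^{k_0-1}\big|\langle f\rangle_{B_{R_k}\cap\Omega} - \langle f\rangle_{B_{R_{k+1}}\cap\Omega}\big| + \big|\langle f\rangle_{B_{R_{k_0}}\cap\Omega} - \langle f\rangle_{B_{R'}\cap\Omega}\big|,
$$
and the entire argument reduces to a single one-step estimate for nested intersection balls.

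For a generic pair of radii $\rho'\leq\rho$ with $B_{\rho'}(x_0)\subset B_{\rho}(x_0)$, I would write $\langle f\rangle_{B_{\rho'}\cap\Omega}-\langle f\rangle_{B_{\rho}\cap\Omega}$ as the average on $B_{\rho'}\cap\Omega$ of $f-\langle f\rangle_{B_\rho\cap\Omega}$ and apply Jensen's inequality to pull the $p$-th power inside:
$$
\big|\langle f\rangle_{B_{\rho'}\cap\Omega}-\langle f\rangle_{B_{\rho}\cap\Omega}\big|^p \leq \frac{1}{|B_{\rho'}(x_0)\cap\Omega|}\int_{B_{\rho'}(x_0)\cap\Omega} \big|f(x) - \langle f\rangle_{B_\rho\cap\Omega}\big|^p\,dx.
$$
Enlarging the integration domain to $B_{\rho}(x_0)\cap\Omega$ and invoking the definition of $[f]_{p,sp}$ bounds the right-hand side by $[f]_{p,sp}^{p}\,\rho^{sp}/|B_{\rho'}(x_0)\cap\Omega|$. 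The no-external-cusps assumption on $\Omega$ supplies a uniform measure-density bound $|B_r(x_0)\cap\Omega|\geq c_0\, r^n$ for every $x_0\in\Omega$ and every $r\leq\mathrm{diam}(\Omega)$; applied with $r=\rho'$ (taking $\rho'=\rho/2$ in the dyadic steps, or $\rho/2\leq R'\leq R_{k_0}=\rho$ in the final step, so in both cases $\rho'$ is comparable to $\rho$) this yields the one-step inequality
$$
\big|\langle f\rangle_{B_{\rho'}\cap\Omega}-\langle f\rangle_{B_{\rho}\cap\Omega}\big|\leq C\,[f]_{p,sp}\,\rho^{(sp-n)/p}.
$$

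Summing this over the chain and using that $sp>n$, so the geometric series $\sum_{k\geq0}(R/2^k)^{(sp-n)/p}$ converges, I obtain $|\langle f\rangle_{B_R\cap\Omega}-\langle f\rangle_{B_{R'}\cap\Omega}|\leq C[f]_{p,sp}\,R^{(sp-n)/p}$. A final appeal to the measure-density bound rewrites $R^{(sp-n)/p}=(R^n)^{(sp-n)/(np)}\leq c\,|B_R(x_0)\cap\Omega|^{(sp-n)/(np)}$, which gives \eqref{giustieq}. The main technical hurdle is precisely that we work with $B_\rho(x_0)\cap\Omega$ rather than with a full ball: the classical Campanato chain trivially passes between nested sets of comparable measure, whereas here the lower bound $|B_r(x_0)\cap\Omega|\geq c_0\,r^n$ must be inserted by hand, and this is exactly where the ``no external cusps'' hypothesis enters essentially---without it, the very same telescoping sum can fail to converge because the cusps make $|B_{R_{k+1}}(x_0)\cap\Omega|$ arbitrarily small compared with $R_{k+1}^n$.
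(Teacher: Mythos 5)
Your proof is correct and is precisely the standard Campanato argument: reduce to the one-step estimate for $\rho/2\le\rho'\le\rho$ via Jensen's inequality and domain enlargement, insert the measure-density lower bound $|B_r(x_0)\cap\Omega|\ge c_0 r^n$ (which is what the ``no external cusps'' hypothesis supplies), and then telescope dyadically, summing a geometric series that converges because $sp>n$. The paper itself gives no proof and simply defers to~\cite[Lemma~2.2]{Giu94}; the argument there is the same as yours.
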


\vspace{2mm}

\begin{thm}\label{thm_holder}
Let~$\Omega\subseteq\R^n$ be an extension domain for~$W^{s,p}$ with no external cusps and
let~$p\in [1,+\infty)$, $s\in(0,1)$ such that $sp>n$. Then, there exists~$C>0$, depending
on~$n$, $s$, $p$ and~$\Omega$, such that
\begin{equation}\label{001}
\| f\|_{C^{0,\alpha}(\Omega)}
\le C\left(
\| f\|^p_{L^p(\Omega)}+
 \int_\Omega
\int_\Omega
\frac{|f(x)-f(y)|^p}{|x-y|^{n+sp}}\,dx\,dy\right)^{\!\frac{1}{p}}\!,
\end{equation}
for any~$f\in L^p(\Omega)$, with~$\alpha:=(sp-n)/p$.
\end{thm}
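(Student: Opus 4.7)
The strategy is the classical Morrey--Campanato route: combined with Lemma~\ref{giusti}, it suffices to control the Campanato-type seminorm $[f]_{p,sp}$ by the Gagliardo seminorm of~$f$, and then to extract a pointwise H\"older estimate from the integral Campanato bound by a dyadic telescoping of ball averages; the $L^\infty$ estimate then follows with essentially no extra work.

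\emph{Step 1: showing $[f]_{p,sp}\le C[f]_{W^{s,p}(\Omega)}$.} Jensen's inequality applied inside the average yields
\[
\int_{B_\rho(x_0)\cap\Omega}\!\big|f(x)-\langle f\rangle_{B_\rho(x_0)\cap\Omega}\big|^p\,dx\le \frac{1}{|B_\rho(x_0)\cap\Omega|}\iint_{(B_\rho(x_0)\cap\Omega)^2}\!|f(x)-f(y)|^p\,dx\,dy.
\]
Bounding $|f(x)-f(y)|^p\le (2\rho)^{n+sp}|f(x)-f(y)|^p|x-y|^{-n-sp}$ and using the no-external-cusp estimate $|B_\rho(x_0)\cap\Omega|\ge c\rho^n$ absorbs the $\rho^{-sp}$ factor present in the definition of $[f]_{p,sp}$ and produces the claim.

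\emph{Step 2: pointwise representative and H\"older continuity.} Fix $x_0\in\Omega$ and a small $R_0$. Iterating Lemma~\ref{giusti} along the dyadic sequence $R_k:=R_0/2^k$ gives
\[
\big|\langle f\rangle_{B_{R_k}(x_0)\cap\Omega}-\langle f\rangle_{B_{R_{k+1}}(x_0)\cap\Omega}\big|\le C\,[f]_{p,sp}\,R_k^{\alpha},
\]
so the sequence of averages is Cauchy; its limit $\tilde f(x_0)$ agrees with $f$ at every Lebesgue point, and summing the geometric tail yields
\[
\big|\tilde f(x_0)-\langle f\rangle_{B_R(x_0)\cap\Omega}\big|\le C\,[f]_{p,sp}\,R^{\alpha}
\]
for every sufficiently small $R$. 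Given $x,y\in\Omega$ with $r:=|x-y|$ small, set $R:=2r$; since $B_{R/2}(y)\subseteq B_R(x)$, I would estimate
\[
\big|\langle f\rangle_{B_R(x)\cap\Omega}-\langle f\rangle_{B_{R/2}(y)\cap\Omega}\big|\le\frac{1}{|B_{R/2}(y)\cap\Omega|}\int_{B_R(x)\cap\Omega}\!\big|f-\langle f\rangle_{B_R(x)\cap\Omega}\big|\,dz,
\]
so that H\"older's inequality combined with the Campanato bound of Step~1 and the no-cusp volume estimate yields $\le C[f]_{p,sp}R^{s-n/p}=C[f]_{p,sp}R^{\alpha}$. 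Three applications of the triangle inequality then deliver $|\tilde f(x)-\tilde f(y)|\le C\,[f]_{p,sp}\,|x-y|^{\alpha}$.

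\emph{Step 3: $L^\infty$ bound and conclusion.} For a reference radius $R$ depending only on $\Omega$ (for instance $R=\min\{1,\mathrm{diam}(\Omega)/2\}$), H\"older's inequality on the defining integral of the average gives $|\langle f\rangle_{B_R(x_0)\cap\Omega}|\le c R^{-n/p}\|f\|_{L^p(\Omega)}$, and plugging this into the telescoping bound of Step~2 produces $\|\tilde f\|_{L^\infty(\Omega)}\le C\bigl(\|f\|_{L^p(\Omega)}+[f]_{p,sp}\bigr)$. Combining this with Step~1 and the H\"older estimate of Step~2 yields~\eqref{001}. The main obstacle will be turning the qualitative ``no external cusps'' hypothesis into the quantitative uniform lower bound $|B_\rho(x_0)\cap\Omega|\ge c\rho^n$ used throughout; for unbounded $\Omega$, an alternative is to first extend $f$ to $W^{s,p}(\R^n)$ via Theorem~\ref{thm_estensione} and run the entire argument on $\R^n$, where the volume bound is trivial.
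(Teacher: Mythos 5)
Your proposal is correct and follows essentially the same Campanato route as the paper: control $[f]_{p,sp}$ by the Gagliardo seminorm via Jensen and the elementary bound $|x-y|\le 2\rho$ on a ball of radius $\rho$, use Lemma~\ref{giusti} to obtain a continuous representative and the H\"older oscillation estimate, and finish with a H\"older-inequality bound on one ball average for the $L^\infty$ part. The only cosmetic differences are that the paper extends $f$ to $\tilde f\in W^{s,p}(\R^n)$ at the outset (which you mention as a fallback) so that full balls can be used in~\eqref{campana}, and that the paper compares $\langle \tilde f\rangle_{B_{2R}(x)}$ with $\langle\tilde f\rangle_{B_{2R}(y)}$ via the common intersection rather than nesting $B_{R/2}(y)\subseteq B_R(x)$; both choices lead to the same estimate.
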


\begin{proof}
In the following, we will denote by $C$ suitable positive quantities, possibly different from line to line, and possibly depending on $p$ and $s$.
\vspace{2mm}

First, we notice that if the right hand side of~\eqref{001}
is not finite, then we are done. Thus, we may suppose that
$$ \int_\Omega
\int_\Omega
\frac{|f(x)-f(y)|^p}{|x-y|^{n+sp}}\,dx\,dy\le C,$$
for some $C>0$.

\vspace{2mm}

Second, since $\Omega$ is an extension domain for~$W^{s,p}$, we can extend any $f$ to a function $\tilde{f}$ such that $\|\tilde{f}\|_{W^{s,p}(\R^n)}\leq C\|f\|_{W^{s,p}(\Omega)}$.

\vspace{2mm}

Now, for any bounded measurable set~$U\subset\R^n$, we consider the average value of the function $\tilde{f}$ in $U$, given by
$$
\langle \tilde{f}\rangle_{U}:=\frac{1}{|U|}\!\int_{U} \tilde{f}(x)\,dx.
$$
For any~$\xi\in\R^n$, the H\"older inequality yields
$$
\big| \xi-\langle \tilde{f}\rangle_{U}\big|^p
\, =\, \frac{1}{|U|^p}\left|
\int_U \xi-\tilde{f}(y)\,dy
\right|^p
\, \le \
\frac{1}{|U|} \int_U |\xi-\tilde{f}(y)|^p\,dy.
$$
Accordingly, by taking~ $x_o\in \Omega$ and $U:=B_r(x_o)$,
$\xi:=\tilde{f}(x)$ and integrating over~$B_r(x_o)$, we obtain that
\begin{eqnarray*} && \int_{B_r(x_o)} |\tilde{f}(x)-\langle 
\tilde{f}\rangle_{B_r(x_o)}
|^p\,dx \\&&\qquad\qquad \qquad
\le\frac{1}{|B_r(x_o)|}\int_{B_r(x_o)}\int_{B_r(x_o)}
|\tilde{f}(x)-\tilde{f}(y)|^p\,dx\,dy.\end{eqnarray*}
Hence, since~$|x-y|\le 2r$ for any~$x$, $y\in B_r(x_o)$,
we deduce that
\begin{eqnarray}\label{campana}
&&\int_{B_r(x_o)} |\tilde{f}(x)-\langle 
\tilde{f}\rangle_{B_r(x_o)}|^p\,dx
\nonumber\\ 
\nonumber \\
&&\qquad \qquad  \qquad  \le \, \frac{(2r)^{n+sp}}{|B_r(x_o)|}\int_{B_r(x_o)}\int_{B_r(x_o)}
\frac{|\tilde{f}(x)-\tilde{f}(y)|^p}{|x-y|^{n+sp
}}\,dx\,dy \nonumber \\
\nonumber \\
&& \qquad \qquad\qquad  \le \, \frac{2^{n+sp}\, r^{sp} C \|f\|^p_{W^{s,p}(\Omega)} }{
|B_1|},
\end{eqnarray}
that implies
\begin{equation}\label{campan1}
[f]^p_{p,sp} \, \leq \,  C \|f\|^p_{W^{s,p}(\Omega)},
\end{equation}
for a suitable constant $C$.

\vspace{2mm}

Now, we will show that $f$ is a continuos function. Taking into account \eqref{giustieq}, it follows that the sequence of functions $x\rightarrow \langle f\rangle_{B_R(x)\cap \Omega}$ converges uniformly in $x\in\Omega$ when $R\rightarrow 0$. In particular the limit function $g$ will be continuos and the same holds for $f$, since by Lebesgue theorem we have that
$$
\lim_{R\rightarrow 0} \,\frac{1}{|B_R(x)\cap \Omega|} \,\int_{B_R(x)\cap\Omega} f(y) \,dy=f(x)   \quad\mbox{for almost every}\,\; x\in \Omega.
$$
\vspace{1mm}

Now, take any $x,y\in\Omega$ and set $R=|x-y|$. We have
\begin{equation*}
|f(x)-f(y)|\leq |f(x)-\langle \tilde{f}\rangle_{B_{2R}(x)}|+|\langle \tilde{f} \rangle_{B_{2R}(x)}-\langle \tilde{f} \rangle_{B_{2R}(y)}|+ |\langle \tilde{f} \rangle_{B_{2R}(y)}-f(y)|
\end{equation*}
We can estimate the first and the third term of right hand-side of the above inequality using Lemma \ref{giusti}. Indeed, getting the limit in \eqref{giustieq} as $R'\rightarrow 0$ and writing $2R$ instead of $R$, for any $x\in\Omega$ we get
\begin{equation}\label{giusti3}
|\langle \tilde{f} \rangle_{B_{2R}(x)}-f(x)| \, \leq \, c\,[f]_{p,sp} |B_{2R}(x)|^{(sp-n)/np} \, \leq \, C [f]_{p,sp}\, R^{(sp-n)/p}
\end{equation}
where the constant $C$ is given by $c \,2^{(sp-n)/p}/ |B_1|$.
\vspace{2mm}

On the other hand, 
$$
|\langle \tilde{f} \rangle_{B_{2R}(x)}-\langle \tilde{f} \rangle_{B_{2R}(y)}|\leq \,|f(z)- \langle \tilde{f} \rangle_{B_{2R}(x)}|+|\tilde{f}(z)- \langle \tilde{f} \rangle_{B_{2R}(y)}|
$$
and so, integrating on $z\in B_{2R}(x)\cap B_{2R}(y)$, we have
\begin{eqnarray*}
&& |B_{2R}(x)\cap B_{2R}(y)| \, |\langle \tilde{f}\rangle_{B_{2R}(x)}-\langle \tilde{f}\rangle_{B_{2R}(y)}| \\[1ex]
&& \qquad \quad \leq \,  \int_{B_{2R}(x)\cap B_{2R}(y)} |\tilde{f}(z)-\langle \tilde{f} \rangle_{B_{2R}(x)}|\,dz \\
&& \qquad \quad \quad + \int_{B_{2R}(x)\cap B_{2R}(y)} |\tilde{f}(z)-\langle \tilde{f}\rangle_{B_{2R}(y)}|\,dz \\[1ex]
&& \qquad \quad \leq \, \int_{B_{2R}(x)} |\tilde{f}(z)-\langle \tilde{f}\rangle_{B_{2R}(x)}|\,dz+ \int_{ B_{2R}(y)} |\tilde{f}(z)-\langle \tilde{f}\rangle_{B_{2R}(y)}|\,dz.
\end{eqnarray*}
Furthermore, since $B_R(x)\cup B_R(y)\subset \big(B_{2R}(x)\cap B_{2R}(y)\big) $, we have 
$$
|B_R(x)|\leq |B_{2R}(x)\cap B_{2R}(y)| \quad \mbox{and}\quad |B_R(y)|\leq |B_{2R}(x)\cap B_{2R}(y)|
$$
and so
\begin{eqnarray*}
|\langle \tilde{f}\rangle_{B_{2R}(x)}- \langle \tilde{f}\rangle_{B_{2R}(y)}| & \leq &\frac{1}{|B_R(x)|} \int_{B_{2R}(x)} |\tilde{f}(z)-\langle \tilde{f}\rangle_{B_{2R}(x)}|\,dz\\[1ex]
&&  + \,\frac{1}{|B_R(y)|} \int_{ B_{2R}(y)} |\tilde{f}(z)-\langle \tilde{f}\rangle_{B_{2R}(y)}|\,dz.
\end{eqnarray*}
An application of the H\"older inequality gives
\begin{eqnarray}\label{giusti5}
&& \frac{1}{|B_R(x)|} \int_{B_{2R}(x)} |\tilde{f}(z)-\langle \tilde{f}\rangle_{B_{2R}(x)}|\,dz \!\! \nonumber \\
&& \qquad \qquad \qquad \quad \leq \, \frac{|B_{2R}(x)|^{{(p-1)}/{p}}}{|B_R(x)|}\left( \int_{B_{2R}(x)} |\tilde{f}(z)-\langle \tilde{f}\rangle_{B_{2R}(x)}|^p\,dz \right)^{\!\!1/p} \nonumber \\[1ex]
&& \qquad \qquad \qquad \quad \leq \, \frac{|B_{2R}(x)|^{{(p-1)}/{p}}}{|B_R(x)|} (2R)^{s} [f]_{p,sp} \nonumber \\[1ex]
& & \qquad \qquad \qquad \quad \leq \, C \,[f]_{p,sp}\, R^{(sp-n)/p}\,.
\end{eqnarray}
Analogously, we obtain
\begin{equation}\label{giusti6}
\frac{1}{|B_R(y)|} \int_{B_{2R}(y)} |\tilde{f}(z)-\langle \tilde{f} \rangle_{B_{2R}(y)}|\,dz  \leq  C \,[f]_{p,sp}\, R^{(sp-n)/p}\,.
\end{equation}
\vspace{2mm}

Combining \eqref{giusti3}, \eqref{giusti5} with~\eqref{giusti6} it follows
\begin{equation}\label{giusti8}
|f(x)-f(y)|\leq C\, [f]_{p,sp}\, |x-y|^{(sp-n)/p},
\end{equation}
up to relabeling the constant $C$.
\vspace{2mm}

Therefore, by taking into account~\eqref{campan1}, we can conclude that $f\in C^{0,\alpha}(\Omega)$, with $\alpha= (sp-n)/p$. 

Finally, taking $R_0<\rm{diam}(\Omega)$ (note that the latter can be possibly infinity), using estimate in~\eqref{giusti3} and the H\"older inequality we have,  for any $x\in \Omega$,
\begin{eqnarray}\label{giusti9}
|f(x)|&\leq &|\langle \tilde{f}\rangle_{B_{R_0}(x)}|+|f(x)- \langle \tilde{f}\rangle_{B_{R_0}(x)}| \nonumber \\
&\leq & \frac{C}{|B_{R_0}(x)|^{1/p}} \|f\|_{L^p(\Omega)} + c\,[f]_{p,sp} \,|B_{R_0}(x)|^{\alpha}.
\end{eqnarray}
Hence, by~\eqref{campan1}, \eqref{giusti8} and~\eqref{giusti9}, we get
\begin{eqnarray*}
\|f\|_{C^{0,\alpha}(\Omega)}&\,=\,&\|f\|_{L^{\infty}(\Omega)}+ \sup_{{x,y\in\Omega}\atop{x\neq y}}
\frac{|f(x)-f(y)|}{ |x-y|^{\alpha}}\\
&\,\leq\,& C \left( \,\|f\|_{L^p(\Omega)} + [f]_{p,sp} \, \right)\\
\\
&\,\leq\,& C  \|f\|_{W^{s,p}(\Omega)} \,.
\end{eqnarray*}
for a suitable positive constant $C$.
\end{proof}

\begin{rem} 
The estimate in~\eqref{campana} says that~$f$ belongs to the Campanato 
space~${\LL}^{p,\lambda}$, with~$\lambda:=sp$, (see~\cite{Cam63} and, e.g.,~\cite[Definition~2.4]{Giu94}).
Then, the conclusion in the proof of~Theorem~\ref{thm_holder} is actually an application of  the Campanato
Isomorphism (see, for instance,~\cite[Theorem~2.9]{Giu94}). 
\end{rem}

\vspace{2mm}

Just for a matter of curiosity, we observe that, according to the definition~\eqref{def1}, the fractional Sobolev space $W^{s,\infty}(\Omega)$ could be view as the space of functions
\begin{equation*}
\left\{ u\in L^\infty(\Omega)\; :\; \frac{|u(x)-u(y)|}{|x-y|^{s}} \in L^\infty(\Omega \times \Omega)  \right\},
\end{equation*}
but this space just boils down to $C^{0,s}(\Omega)$, that is consistent with the H\"older embedding proved in this section; i.e., taking formally $p=\infty$ in~Theorem~\ref{thm_holder}, the function $u$ belongs to $C^{0,s}(\Omega)$. 

\vspace{3mm}

\section{Some counterexamples in non-Lipschitz domains}\label{sec_example}

When the domain $\Omega$ is not Lipschitz, some interesting things happen, as next examples show.

\vspace{2mm}

\begin{exe}\label{exa_lip}
{\em Let~$s\in(0,1)$. We will construct a function $u$ in $W^{1,p}(\Omega)$ that does not belong to $W^{s,p}(\Omega)$, providing a counterexample to~Proposition~\ref{enrico23} when the domain is not Lipschitz.}

\vspace{2mm}

Take any
\begin{equation}\label{CON.1}
p\in (1/s,+\infty).\end{equation}
Due to~\eqref{CON.1}, we can fix
\begin{equation}\label{CON.2}
\kappa >\frac{p+1}{sp-1}.
\end{equation}
We remark that~$\kappa>1$.
\vspace{2mm}

Let us consider the cusp in the plane
$$
\mathcal{C}:=\big\{ (x_1,x_2)
{\mbox{ with }} x_1\le 0 {\mbox{ and }} |x_2|\le |x_1|^\kappa\big\}
$$
and take polar coordinates on~$\R^2\setminus \mathcal{C}$, say~$\rho=\rho(x)\in
(0,+\infty)$ and~$\theta=\theta(x)\in (-\pi,\pi)$, with~$x=(x_1,x_2)
\in\R^2\setminus \mathcal{C}$.
\vspace{2mm}

We define the function~$u(x):=\rho(x) \theta(x)$
and the heart-shaped domain~$\Omega:=(\R^2\setminus \mathcal{C})\cap B_1$, with $B_1$ being the unit ball centered in the origin.
Then, $u\in W^{1,p}(\Omega)\setminus W^{s,p}(\Omega)$.

\vspace{1mm}

To check this, we observe that
$$ \partial_{x_1} \rho \, = \,  (2\rho)^{-1} \partial_{x_1} \rho^2
 \, =  \, (2\rho)^{-1}\partial_{x_1}(x_1^2+x_2^2) \, = \, \frac{x_1}{\rho}$$
and, in the same way,
$$ \partial_{x_2}\rho=\frac{x_2}{\rho}.$$
Accordingly,
\begin{eqnarray*}
 1 \! & = & \! \partial_{x_1} x_1 \, = \, \partial_{x_1} (\rho\cos\theta)
 \, = \,  \partial_{x_1}\rho \cos\theta -\rho\sin\theta\partial_{x_1} \theta
 \, = \,  \frac{x_1^2}{\rho^2}-x_2\partial_{x_1}\theta \\
& = & \! 1-\frac{x_2^2}{\rho^2}-x_2\partial_{x_1}\theta.
\end{eqnarray*}
That is
$$ \partial_{x_1}\theta=-\frac{x_2}{\rho^2}.$$
By exchanging the roles of~$x_1$ and~$x_2$ (with some
care on the sign of the derivatives of the trigonometric
functions), one also obtains
$$ \partial_{x_2}\theta=\frac{x_1}{\rho^2}.$$
Therefore,
$$ \partial_{x_1} u \, = \,  \rho^{-1} (x_1\theta-x_2)\; \, 
{\mbox{ and }}\;  \, \partial_{x_2} u \, = \,  \rho^{-1} (x_2\theta+x_1)$$
and so
$$ |\nabla u|^2 = \theta^2 +1  \, \le \,  \pi^2+1.$$
This shows that~$u\in W^{1,p}(\Omega)$.
\vspace{2mm}

On the other hand, 
let us fix~$r\in(0,1)$,
to be taken arbitrarily small at the end, and
let us define~$r_0:=r$ and, for any~$j\in\N$, $r_{j+1}:=r_j-r_j^\kappa$.
By induction, one sees that~$r_j$ is strictly decreasing, that~$r_j>0$
and so~$r_j\in(0,r)\subset (0,1)$. 
Accordingly, we can define
$$ \ell:=\lim_{j\rightarrow+\infty} r_j\in [0,1].$$
By construction
$$ \ell \, = \,  \lim_{j\rightarrow+\infty} r_{j+1}
 \, = \, \lim_{j\rightarrow+\infty} r_j-r_j^\kappa
 \, = \, \ell -\ell^\kappa,$$
hence~$\ell=0$. As a consequence,
\begin{eqnarray}\label{CON.3}
\sum_{j=0}^{+\infty} r_j^\kappa \! & = & \! \lim_{N\rightarrow+\infty} 
\sum_{j=0}^{N} r_j^\kappa \, =\, \lim_{N\rightarrow+\infty}
\sum_{j=0}^{N} r_j-r_{j+1} \nonumber \\
&  = &  \! \lim_{N\rightarrow+\infty} r_0-r_{N+1} \, = \, r.
\end{eqnarray}
We define
\begin{eqnarray*}
{\mathcal{D}}_j& :=&
\big\{ (x,y)\in \R^2\times\R^2 {\mbox{ s.t. }} x_1,y_1\in (-r_j,-r_{j+1}),\\
&&\qquad\qquad\quad
x_2\in (|x_1|^\kappa, 2|x_1|^\kappa)
{\mbox{ and }}
-y_2\in (|y_1|^\kappa, 2|y_1|^\kappa)
\big\}.
\end{eqnarray*}
We observe that
\begin{eqnarray*}
 \Omega\times\Omega  &\supseteq &
\big\{ (x,y)\in\R^2\times\R^2 {\mbox{ s.t. }} x_1,y_1\in (-r,0),\\
&&\qquad\quad
x_2\in (|x_1|^\kappa, 2|x_1|^\kappa)
{\mbox{ and }}
-y_2\in (|y_1|^\kappa, 2|y_1|^\kappa)
\big\}\\
&\supseteq& \bigcup_{j=0}^{+\infty} {\mathcal{D}}_j,
\end{eqnarray*}
and the union is disjoint.
Also,
$$r_{j+1} \, = \, r_j( 1-r_j^{\kappa-1}) \, \ge \,  r_j (1-r^{\kappa-1}) \, \ge \,  \frac{r_j}2,$$
for small~$r$. Hence,
if~$(x,y)\in {\mathcal{D}}_j$,
$$ |x_1|\le r_j\le 2r_{j+1}\le 2|y_1|$$
and, analogously,
$$ |y_1|\le 2|x_1|.$$
Moreover,
if~$(x,y)\in {\mathcal{D}}_j$,
$$ |x_1-y_1| \, \le \,  r_j-r_{j+1} \, = \, r_j^\kappa
 \, \le \,  2^\kappa r_{j+1}^\kappa \, \le \,  2^\kappa |x_1|^\kappa$$
and
$$ |x_2-y_2| \, \le \,  |x_2|+|y_2| \, \le \,  2|x_1|^\kappa+2|y_1|^\kappa
 \, \le \,  2^{\kappa+2} |x_1|^\kappa.$$
As a consequence, if~$(x,y)\in {\mathcal{D}}_j$,
$$ |x-y|\le 2^{\kappa+3} |x_1|^\kappa.$$
Notice also that, when~$(x,y)\in{\mathcal{D}}_j$,
we have~$\theta(x)\ge \pi/2$ and $\theta(y)\le -\pi/2$,
so
$$ u(x)-u(y) \, \ge \,  u(x) \, \ge \,  \frac{\pi \,\rho(x)}{2} \, \ge \, 
\frac{\pi \,|x_1|}{2}.$$
As a consequence, for any~$(x,y)\in {\mathcal{D}}_j$,
$$
\frac{|u(x)-u(y)|^p}{|x-y|^{2+sp}}
\ge c |x_1|^{p-\kappa (2+sp)},
$$
for some~$c>0$.
Therefore,
\begin{eqnarray*}
&& 
\iint_{\mathcal{D}_j}\frac{|u(x)-u(y)|^p}{|x-y|^{2+sp}}\,dx\,dy\,
\ge\,
\iint_{ {\mathcal{D}}_j } {c}{|x_1|^{p-\kappa (2+sp)}}\,dx\,dy
\\
&&\qquad\qquad
=c \int_{-r_j}^{-r_{j+1}} \,dx_1\int_{-r_j}^{-r_{j+1}}\,dy_1
\int_{|x_1|^\kappa}^{2|x_1|^\kappa}\,dx_2
\int_{-2|y_1|^\kappa}^{-|y_1|^\kappa}\,dy_2 {|x_1|^{p-\kappa (2+sp)}}
\\
&&\qquad\qquad
= c \int_{-r_j}^{-r_{j+1}} \,dx_1\int_{-r_j}^{-r_{j+1}}\,dy_1 
{|x_1|^{p-\kappa (2+sp)}} |x_1|^\kappa |y_1|^\kappa
\\ &&\qquad\qquad
\ge c \, 2^{-\kappa} \int_{-r_j}^{-r_{j+1}} 
\,dx_1\int_{-r_j}^{-r_{j+1}}\,dy_1
{|x_1|^{p-\kappa sp}}
\\
&&\qquad\qquad
\ge  c\, 2^{-\kappa} \int_{-r_j}^{-r_{j+1}}
\,dx_1\int_{-r_j}^{-r_{j+1}}\,dy_1
{r_j^{p-\kappa sp}} 
\\ &&\qquad \qquad = \, c\, 2^{-\kappa} 
{r_j^{p-\kappa sp+2\kappa}} \, = \, c\, 2^{-\kappa}
{r_j^{\kappa-\alpha}},
\end{eqnarray*}
with
\begin{equation}\label{CON.4}
\alpha:=\kappa(sp-1)-p>1,
\end{equation}
thanks to~\eqref{CON.2}.

\vspace{1mm}

In particular,
$$\iint_{\mathcal{D}_j}\frac{|u(x)-u(y)|^p}{|x-y|^{2+sp}}\,dx\,dy
\, \ge \, c\, 2^{-\kappa} r^{-\alpha}
{r_j^{\kappa}}$$
and so, by summing up and exploiting~\eqref{CON.3},
$$
\int_\Omega\int_{\Omega}\frac{|u(x)-u(y)|^p}{|x-y|^{2+sp}}\,dx\,dy
\, \ge \, \sum_{j=0}^{+\infty}
\iint_{\mathcal{D}_j}\frac{|u(x)-u(y)|^p}{|x-y|^{2+sp}}\,dx\,dy
\, \ge \, c\, 2^{-\kappa} r^{1-\alpha}.$$
By taking~$r$ as small as we wish and recalling~\eqref{CON.4},
we obtain that
$$ \int_\Omega\int_{\Omega}\frac{|u(x)-u(y)|^p}{|x-y|^{2+sp}}\,dx\,dy
=+\infty,$$
so~$u\not\in W^{s,p}(\Omega)$.\hfill $\Box$
\end{exe}

\vspace{2mm}

\begin{exe} \label{exe_ovidiu}
{\em Let~$s\in(0,1)$. We will construct a sequence of functions $\{f_n\}$ bounded in $W^{s,p}(\Omega)$ that does not admit any convergent subsequence in~$L^q(\Omega)$, providing a counterexample to Theorem~\ref{thm_comp} when the domain is not Lipschitz.}
\vspace{2mm}

We follow an observation by~\cite{Sav11}. For the sake of simplicity, fix $n=p=q=2$. We take $a_k:={1}/{C^k}$ for a constant $C>10$ and we consider the set $\Omega= \bigcup_{k=1}^\infty B_k$ where, for any $k\in \N$, $B_k$ denotes the ball of radius $a^2_k$ centered in $a_k$. Notice that
$$
\dys 
a_k \to 0 \ \text{as} \ k\to\infty \ \ \text{and} \  \
a_k-a_k^2>a_{k+1}+a_{k+1}^2.
$$
Thus, $\Omega$ is the union of disjoint balls, it is bounded and it is not a Lipschitz domain.

For any $n\in \N$, we define the function $f_n:\Omega\to \R$ as follows
$$
\dys
f_n(x)=
\begin{cases}
\pi^{-{\frac{1}{2}}} \,a_n^{-2} & x\in B_n, \\
\,\,\;0 & x\in \Omega\setminus B_n\,.
\end{cases}
$$
We observe that we cannot extract any subsequence convergent in $L^2(\Omega)$ from the sequence of functions $\{f_n\}$, because $f_n(x)\rightarrow 0$ as $n\rightarrow +\infty$, for any fixed $x\in\Omega$ but
$$
\|f_n\|^2_{L^2(\Omega)}\, =\, \int_{\Omega} |f_n(x)|^2 \,dx \, = \, \int_{B_n} \pi^{-1}\,a_n^{-4}\,dx  \, = \, 1.
$$
\vspace{1mm}

Now, we compute the $H^s$~norm of~$f_n$ in~$\Omega$. We have
\begin{eqnarray}\label{sara}
\!\!\!\! \int_{\Omega} \int_{\Omega} \frac{|f_n(x)-f_n(y)|^2}{|x-y|^{2+2s}}\,dx\,dy
\!\! &=& \!\!
2 \int_{\Omega\setminus B_n}\!\int_{B_n} \frac{\pi^{-1} a_n^{-4}}{|x-y|^{2+2s}}\,dx\,dy \nonumber \\
&=& \!\!  2\pi^{-1}\sum_{k\neq n} \int_{B_k}\!\int_{B_n} \frac{ a_n^{-4}}{|x-y|^{2+2s}}\,dx\,dy.
\end{eqnarray}

Thanks to the choice of $\{a_k\}$ we have that
$$
|a_n^2+a_k^2|=a_n^2+a_k^2\leq \frac{|a_n-a_k|}{2}.
$$ 
Thus, since $x\in B_n$, $y\in B_k$, it follows 
\begin{eqnarray*}
|x-y| \! &\geq &\! |a_n-a_n^2-(a_k+a_k^2)|\,= \,|a_n-a_k-(a_n^2+a_k^2)| \\
&\geq &\! |a_n-a_k|-|a_n^2+a_k^2|\, \geq \, |a_n-a_k|-\frac{|a_n-a_k|}{2}\\
&=& \!\frac{|a_n-a_k|}{2}.
\end{eqnarray*}
Therefore, 
\begin{eqnarray}\label{sara2}
\int_{B_k}\int_{B_n} \frac{ a_n^{-4}}{|x-y|^{2+2s}}\,dx\,dy 
\! & \leq & \! 2^{2+2s} \int_{B_k}\int_{B_n} \frac{ a_n^{-4}}{|a_n-a_k|^{2+2s}}\,dx\,dy \nonumber\\
& = & \! 2^{2+2s} \pi^2 \frac{a_k^4}{|a_n-a_k|^{2+2s}}.
\end{eqnarray}
Also, if $m\geq j+1$ we have
\begin{equation}\label{sara28}
a_j-a_m\geq a_j-a_{j+1}=\frac{1}{C^j}-\frac{1}{C^{j+1}} =\frac{1}{C^j}\left( 1-\frac{1}{C} \right)\geq \frac{a_j}{2}.
\end{equation}
Therefore, combining \eqref{sara28} with \eqref{sara} and \eqref{sara2}, we get 
\begin{eqnarray*}
&& \int_{\Omega} \int_{\Omega} \frac{|f_n(x)-f_n(y)|^2}{|x-y|^{2+2s}}\,dx\,dy \\
&& \qquad \qquad \qquad \quad \leq \, 2^{3+2s}\pi\sum_{k\neq n} \frac{a_k^4}{|a_n-a_k|^{2+2s}} \\
\\
 &&  \qquad \qquad \qquad \quad = \, 2^{3+2s}\pi\left(\sum_{k< n} \frac{a_k^4}{(a_k-a_n)^{2+2s}} + \sum_{k>n} \frac{a_k^4}{(a_n-a_k)^{2+2s}}\right) \\
\\
 && \qquad \qquad \qquad \quad \leq  \, 2^{5+4s}\pi\left( \sum_{k< n} \frac{a_k^4}{a_k^{2+2s}}+ \sum_{k> n} \frac{a_k^4}{a_n^{2+2s}} \right)\\
 \\
 && \qquad \qquad \qquad \quad \leq  \, 2^{6+4s}\pi\sum_{k\neq n} a_k^{2-2s}
 \, =  \, 2^{6+4s}\pi\sum_{k\neq n} \left( \frac{1}{C^{2-2s}} \right)^k \, < \, +\infty.
\end{eqnarray*}
This shows that $\{f_n\}$ is bounded in $H^s(\Omega)$.
\hfill $\Box$
\end{exe}

\vspace{3mm}

\end{document}